\documentclass[12pt]{amsart}
\usepackage{amsmath,amsthm,amsfonts,amssymb,mathrsfs}
\usepackage[all]{xy}
\usepackage{amssymb}
\usepackage{latexsym}
\usepackage{amsmath,amsthm}
\usepackage{aeguill}
\usepackage{amssymb}
\usepackage{mathrsfs}
\usepackage{hyperref}
\usepackage{etoolbox}
\usepackage{enumerate}

\makeatletter
\pretocmd{\chapter}{\addtocontents{toc}{\protect\addvspace{15\p@}}}{}{}
\pretocmd{\section}{\addtocontents{toc}{\protect\addvspace{3\p@}}}{}{}
\makeatother

\makeatletter
\def\@tocline#1#2#3#4#5#6#7{\relax
  \ifnum #1>\c@tocdepth 
  \else
    \par \addpenalty\@secpenalty\addvspace{#2}%
    \begingroup \hyphenpenalty\@M
    \@ifempty{#4}{%
      \@tempdima\csname r@tocindent\number#1\endcsname\relax
    }{%
      \@tempdima#4\relax
    }%
    \parindent\z@ \leftskip#3\relax \advance\leftskip\@tempdima\relax
    \rightskip\@pnumwidth plus4em \parfillskip-\@pnumwidth
    #5\leavevmode\hskip-\@tempdima
      \ifcase #1
       \or\or \hskip .5em \or \hskip 1em \else \hskip 1.5em \fi%
      #6\nobreak\relax
    \dotfill\hbox to\@pnumwidth{\@tocpagenum{#7}}\par
    \nobreak
    \endgroup
  \fi}
\makeatother
\setcounter{section}{0}
\setcounter{tocdepth}{2}

\newcommand{\C}{\mathbb{C}}
\newcommand{\N}{\mathbb{N}}
\newcommand{\Z}{\mathbb{Z}}

\newcommand{\Q}{\mathbb{Q}}
\newcommand{\F}{\mathbb{F}}

\newcommand{\G}{\mathbb{G}}

\newcommand{\cG}{\mathcal{G}}
\newcommand{\cH}{\mathcal{H}}
\newcommand{\cI}{\mathcal{I}}
\newcommand{\cS}{\mathcal{S}}
\newcommand{\cT}{\mathcal{T}}

\newcommand{\Gal}{\operatorname{Gal}}

\newcommand{\diag}{\operatorname{diag}}

\newcommand{\Spec}{\operatorname{Spec}\,}
\newcommand{\Res}{\operatorname{Res}}

\newcommand{\rk}{\operatorname{rk}}

\newcommand{\im}{\operatorname{im}}

\renewcommand{\sc}{\operatorname{sc}}
\renewcommand{\ss}{\operatorname{ss}}
\newcommand{\der}{\operatorname{der}}
\newcommand{\red}{\operatorname{red}}
\newcommand{\rank}{\operatorname{rank}}

\newcommand{\Hom}{\operatorname{Hom}}
\newcommand{\Aut}{\operatorname{Aut}}
\newcommand{\End}{\operatorname{End}}

\newcommand{\nr}{\operatorname{nr}}

\newcommand{\SO}{\mathrm{SO}}

\newcommand{\SU}{\mathrm{SU}}
\newcommand{\PSU}{\mathrm{PSU}}
\newcommand{\GL}{\mathrm{GL}}
\newcommand{\PGL}{\mathrm{PGL}}
\newcommand{\SL}{\mathrm{SL}}
\newcommand{\PSL}{\mathrm{PSL}}
\newcommand{\Sp}{\mathrm{Sp}}
\newcommand{\GSp}{\mathrm{GSp}}
\newcommand{\Br}{\mathrm{Br}}

\newcommand{\ConstA}{C_1}
\newcommand{\ConstB}{C_2}
\newcommand{\ConstC}{C_3}
\newcommand{\ConstD}{C_4}
\newcommand{\ConstCA}{C_5}
\newcommand{\ConstE}{C_6}
\newcommand{\ConstG}{C_{7}}
\newcommand{\ConstMN}{C_{8}}
\newcommand{\ConstSubtorus}{C_{9}}
\newcommand{\ConstReg}{C_{10}}
\newcommand{\ConstInter}{C_{11}}
\newcommand{\ConstH}{C_{12}}
\newcommand{\ConstK}{C_{13}}

\def\bG{\mathbf{G}}
\def\bH{\mathbf{H}}

\def\bB{\mathbf{B}}
\def\bT{\mathbf{T}}
\def\bC{\mathbf{C}}

\def\rd{\phi}

\newcommand\uG{\underline{G}}
\newcommand\uN{\underline{N}}
\newcommand\uR{\underline{R}}
\newcommand\uH{\underline{H}}
\newcommand\uI{\underline{I}}
\newcommand\uS{\underline{S}}
\newcommand\uT{\underline{T}}

\newcommand\uZ{\underline{Z}}
\newcommand{\et}{\mathrm{\text{\'e}t}}
\newcommand{\Lattice}{\Lambda}
\newcommand{\ReducedLattice}{\mathrm{L}}

\newcommand{\finiteG}{G}

\newcommand{\finiteP}{P}
\newcommand{\finiteRhoSS}{\bar\rho_\ell^{\mathrm{ss}}}
\newcommand{\finiteAlgebraicG}{\underline{G}}
\newcommand{\finiteAlgebraicH}{\underline{H}}
\newcommand{\finiteAlgebraicS}{\underline{S}}

\newcommand{\finiteAlgebraicZ}{\underline{Z}}
\newcommand{\GroupSchemeO}{\cI}
\newcommand{\GroupSchemeZ}{\cH}

\newtheorem{thm}{Theorem}[section]
\newtheorem{theorem}[thm]{Theorem}
\newtheorem{cor}[thm]{Corollary}

\newtheorem{prop}[thm]{Proposition}
\newtheorem{lemma}[thm]{Lemma}

\newtheorem{conj}[thm]{Conjecture}

\newtheorem{remark}[thm]{Remark}
\newtheorem{lem}[thm]{Lemma}

\newenvironment{customthm}[1]
  {\innercustomthm}
  {\endinnercustomthm}

\begin{document}

\title[Maximality of Galois actions]{Maximality of Galois actions for abelian and hyperk\"ahler  varieties}

\author{Chun Yin Hui}
\email{pslnfq@gmail.com}
\address{
Yau Mathematical Sciences Center\\
Tsinghua University\\
 Haidian District, Beijing 100084\\
 China
}

\author{Michael Larsen}
\email{mjlarsen@indiana.edu}
\address{
Department of Mathematics \\
Indiana University \\
Bloomington, IN 47405, U.S.A.}

\thanks{Chun Yin Hui was supported by the National Research Fund, Luxembourg (cofunded under the Marie Curie Actions of the European Commission (FP7-COFUND)) and China's recruitment program for young professionals.
Michael Larsen is partially supported by the National Science Foundation.}

\begin{abstract}
Let $\{\rho_\ell\}_\ell$ be the system of $\ell$-adic representations arising from the $i$th
$\ell$-adic cohomology of a proper smooth variety $X$ defined over a number field $K$.
Let $\Gamma_\ell$ and $\bG_\ell$ be respectively the image and the algebraic monodromy group of $\rho_\ell$.
We prove that the reductive quotient of $\bG_\ell^\circ$ 
is unramified over every degree $12$ totally ramified extension of $\Q_\ell$ for all sufficiently large $\ell$.
We give a necessary and sufficient condition $(\ast)$ on $\{\rho_\ell\}_\ell$ such that
for all sufficiently large $\ell$, the subgroup $\Gamma_\ell$ is in some sense maximal compact in $\bG_\ell(\Q_\ell)$.
This is used to deduce Galois maximality results for $\ell$-adic representations arising from
abelian varieties (for all $i$) and hyperk\"ahler varieties ($i=2$)
defined over finitely generated fields over $\Q$.
\end{abstract}

\maketitle
\tableofcontents

\section{Introduction}\label{s1}
\subsection{Galois maximality conjecture}
The starting point of this paper is the well known
theorem of Serre \cite[Th\'eor\`eme~2]{Se72} asserting that for every non-CM elliptic curve 
$E$ over a number field $K$, the $\ell$-adic Galois representation
$\Gal_K\to \GL_2(\Z_\ell)$, given by the Galois action on the  $\ell$-adic Tate module of $E$, is surjective for all $\ell$ sufficiently large.
In his 1984--85 Coll\`ege de France course \cite{Se85}, Serre extended this result to abelian varieties $X$ 
with $\End_{\overline K}(X)=\Z$,
when $g$ is odd (or $g\in\{2,6\}$).  For $\ell\gg 0$, the image of $\Gal_K$ in $\Aut(T_\ell(X))$ is then
$\GSp_{2g}(\Z_\ell)$.  The hypothesis on $g$ ensures that
the only semisimple group  admitting an irreducible, minuscule, symplectic representation of degree $2g$ is $\Sp_{2g}$,
and the only representation of this form the standard one (the relevance of this to abelian varieties is due, to the best of our
knowledge, to Ribet \cite{Ri}.)
When
$g = 4m$, for example, $\Sp_{2m} \times \SL_2\times \SL_2$ has an irreducible, minuscule, symplectic representation of degree $2g$,
namely the external tensor product of the natural representations of the three factors, and a new idea is needed.

Wintenberger \cite[Th\'eor\`eme~2]{Wi} succeeded in proving a general result for abelian varieties, 
which can be formulated as follows.
Let $X$ be an abelian variety of dimension $g$ over a number field $K$.
Let $\Gamma_\ell$ be the image of $\Gal_K$ in $ \Aut(T_\ell(X))$ and
$\bG_\ell$ the Zariski closure of $\Gamma_\ell$ in $\GL_{2g}$ over $\Q_\ell$.   Let $\bG_\ell^{\sc}$ denote the universal covering group of
the derived group of the identity component $\bG_\ell^\circ$. Then for all sufficiently large $\ell$,
the group $\Gamma_\ell$ contains the image of a hyperspecial maximal compact 
subgroup of $\bG_\ell^{\sc}(\Q_\ell)$.  A key ingredient in Wintenberger's argument was Falting's proof of Tate's conjecture for abelian varieties \cite[Satz~4]{Fa83},
which guarantees that certain algebraic cycles predicted by Tate's general conjecture exist.

The goal of this paper is to show it is feasible to prove theorems of this type with less powerful information from arithmetic geometry; 
instead of assuming that certain algebraic cycles exist, it is enough to 
assume equality between certain dimensions of $\ell$-adic and (mod $\ell$) Tate cycles.
This not only gives a new proof of Wintenberger's theorem for abelian varieties (where we know the needed algebraic cycles exist, as graphs of endomorphisms)
it gives new Galois maximality results, for instance for hyperk\"ahler varieties, where we do not know it.  Beyond these special cases, 
it offers a general approach to proving maximality of Galois images without first proving a version of the Tate Conjecture or the Mumford-Tate Conjecture.  The price for doing this is harder work on the group theory side.

Let $X$ be a proper smooth variety $X$ defined over a finitely generated subfield $K\subset \C$. 
Let $\overline K$ denote the algebraic closure of $K$ in $\C$ and $X_{\overline K}:= X\times_K\overline K$. 
For a fixed non-negative integer $i$ and a varying 
rational prime $\ell$, each $\ell$-adic \'etale cohomology cohomology group $H^i(X_{\overline K},\Q_\ell)$ is a $\Q_\ell$-vector space 
with a continuous $\Gal_K:=\Gal(\overline K/K)$-action.
Let $n$ be the common dimension of $H^i(X_{\overline K},\Q_\ell)$ for all $\ell$.
We obtain a system of 
$\ell$-adic representations
\begin{equation}\label{system}
\{\rho_\ell:\Gal_K\to \GL_n(\Q_\ell)\}_\ell
\end{equation}
which in the case that $K$ is a number field is (by the main theorem of Deligne \cite{De74}) a \emph{strictly compatible system}
in the sense of Serre \cite[Chapter 1]{Se98}.
The image $\Gamma_\ell := \rho_\ell(\Gal_K)$, is called the \emph{monodromy group} of $\rho_\ell$; it 
is a compact $\ell$-adic Lie subgroup of $\GL_n(\Q_\ell)$. 

The \emph{algebraic monodromy group} of $\rho_\ell$, denoted by $\bG_\ell$,
is defined to be the Zariski closure of $\Gamma_\ell$ in $\GL_{n,\Q_\ell}$.
There exists a finite extension $L/K$ such that $\rho_\ell(\Gal_L)\subset\bG_\ell^\circ(\Q_\ell)$
for all $\ell$ (see \cite[p.6,17]{Se81}, \cite[$\mathsection2.2.3$]{Se85}.)
When $X/K$ is projective, the conjectural theory of motives together with the celebrated conjectures of Hodge, of Tate, and of Mumford-Tate
predict the existence of a common connected reductive $\Q$-form $\bG_\Q$ of $\bG_\ell^\circ$ 
for all $\ell$ (see \cite[$\mathsection3$]{Se94}).
Then Serre's conjectures on maximal motives \cite[11.4, 11.8]{Se94} imply that 
if $\cG$ denotes any extension of $\bG_{\Q}$ to a group scheme over $\Z[1/N]$
for some $N$, the compact subgroup $\rho_\ell(\Gal_L)$ 
is in a suitable sense maximal in $\cG(\Z_\ell)$  
if $\ell$ is sufficiently large.

Denote $\Gamma_\ell\cap\bG_\ell^\circ(\Q_\ell)$ by $\Gamma_\ell^\circ$, 
the derived group of $\bG^\circ_\ell$ by $\bG^{\der}_\ell$,
the intersection $\Gamma_\ell\cap\bG_\ell^{\der}(\Q_\ell)$ by $\Gamma_\ell^{\der}$,
the quotient of $\bG_\ell^\circ$ by its radical by  $\bG_\ell^{\ss}$, and
the image of $\Gamma_\ell^\circ$ under the quotient map
$\bG_\ell^\circ(\Q_\ell)\to \bG_\ell^{\ss}(\Q_\ell)$ by $\Gamma_\ell^{\ss}$.
Since $\bG_\ell^{\ss}$ is connected semisimple, 
it has a universal covering group, which we denote $\bG_\ell^{\sc}$;
we write $\Gamma_\ell^{\sc}$ 
for the preimage of $\Gamma_\ell^{\ss}$ under the map
$\bG_\ell^{\sc}(\Q_\ell)\to\bG_\ell^{\ss}(\Q_\ell)$.
The following statement, due to the second named author, is a weak version of
Serre's weak maximality conjecture with the feature that it can be formulated without
assuming the Mumford-Tate conjecture.  The connections between these conjectures
are explored further in \cite{HL15}. 

\begin{conj}\cite{La95}\label{Laconj}
Let $\{\rho_\ell\}_\ell$ be the system of $\ell$-adic representations
arising from the $i$th $\ell$-adic cohomology of a proper smooth variety $X/K$.
Then the $\ell$-adic Lie group $\Gamma_\ell^{\sc}$ is 
a hyperspecial maximal compact subgroup of $\bG_\ell^{\sc}(\Q_\ell)$
for all sufficiently large $\ell$.
\end{conj}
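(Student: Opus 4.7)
The plan is to split the conjecture into two independent ingredients: (a) $\mathbf{G}_\ell^{\sc}$ is unramified over $\Q_\ell$ (i.e.\ quasi-split and split over an unramified extension), and (b) $\Gamma_\ell^{\sc}$ equals the group of $\Z_\ell$-points of a smooth reductive $\Z_\ell$-model of $\mathbf{G}_\ell^{\sc}$. By Bruhat-Tits theory these two properties together are equivalent to $\Gamma_\ell^{\sc}$ being hyperspecial maximal compact, so it suffices to establish each separately.

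For (a) I would bootstrap the statement announced in the abstract, that $\mathbf{G}_\ell^{\red}$ becomes unramified over a totally ramified extension $F/\Q_\ell$. This already forces the action of the inertia subgroup $I_{\Q_\ell}$ on the absolute Dynkin diagram of $\mathbf{G}_\ell^{\sc}$ to factor through the finite quotient $\Gal(F/\Q_\ell)$. Compatibility of the system $\{\rho_\ell\}_\ell$ then enters: the isomorphism class of the quasi-split inner form of $\mathbf{G}_\ell^{\sc}$ can be read off from Frobenius conjugacy classes at primes $p \neq \ell$, and is therefore $\ell$-independent; combined with the $\ell$-independence of the root datum, this should force the residual $\Gal(F/\Q_\ell)$-action on the Dynkin diagram to be trivial for $\ell$ large.

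For (b) I would study the mod-$\ell$ semisimplification $\bar\rho_\ell^{\ss}$ via Nori-Larsen-Pink theory: for $\ell \gg 0$ its image is sandwiched between $\finiteAlgebraicG_\ell(\F_\ell)^+$ and $\finiteAlgebraicG_\ell(\F_\ell)$, for a reductive algebraic subgroup $\finiteAlgebraicG_\ell \subset \GL_{n,\F_\ell}$. The ``sufficient condition'' alluded to in the abstract should amount to the assertion that the root datum of $\finiteAlgebraicG_\ell$ agrees with that of $\mathbf{G}_\ell^{\sc}$. Granting this, $\mathbf{G}_\ell^{\sc}$ and $\finiteAlgebraicG_\ell$ become respectively the generic and special fibres of a common smooth reductive $\Z_\ell$-group $\cG_\ell$, and a pro-$\ell$ lifting argument using Hensel and the simple connectedness of $\mathbf{G}_\ell^{\sc}$ promotes the mod-$\ell$ surjection onto $\finiteAlgebraicG_\ell(\F_\ell)$ to the desired identification $\Gamma_\ell^{\sc} = \cG_\ell(\Z_\ell)$.

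The main obstacle is the root-datum comparison in (b): without access to a common $\Q$-form of $\mathbf{G}_\ell^\circ$ one cannot a priori identify the reductive type of $\finiteAlgebraicG_\ell$ over $\F_\ell$ with that of $\mathbf{G}_\ell^{\sc}$ over $\Q_\ell$. This is exactly the point at which the Tate conjecture enters for abelian varieties: it identifies $\mathbf{G}_\ell^\circ$ with the Mumford-Tate group base-changed to $\Q_\ell$, and the associated Chevalley $\Z[1/N]$-model reduces modulo $\ell$ to a candidate matching $\finiteAlgebraicG_\ell$; comparing Frobenius eigenvalues via compatibility then pins down the isomorphism for all sufficiently large $\ell$, delivering the conjecture in the abelian-variety case.
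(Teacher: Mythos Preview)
The statement you are addressing is a \emph{conjecture}; the paper does not prove it in general but only under the commutant hypothesis $(\ast)$ (Theorem~\ref{main}), and then verifies $(\ast)$ for abelian varieties via Faltings (Theorem~\ref{main2}).  So your proposal should be read as a strategy for the abelian-variety case, and I will compare it on that basis.

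There is a genuine gap in your part~(b).  You write that for abelian varieties ``the Tate conjecture \ldots\ identifies $\mathbf{G}_\ell^\circ$ with the Mumford-Tate group base-changed to $\Q_\ell$.''  This is the \emph{Mumford--Tate conjecture}, not the Tate conjecture, and it is still open even for abelian varieties.  What Faltings proved is only that $\End_{\Gal_K}(T_\ell X)\otimes\Q_\ell \cong \End(X)\otimes\Q_\ell$ and its mod~$\ell$ analogue; this pins down the \emph{commutant} of $\Gamma_\ell$ (resp.\ $G_\ell$) in $M_n$, not the root datum of $\bG_\ell$.  Consequently there is no Chevalley $\Z[1/N]$-model available to compare with $\finiteAlgebraicG_\ell$, and your ``pro-$\ell$ lifting'' step has no group scheme $\cG_\ell$ to lift into.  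Your part~(a) has a related issue: the claim that the quasi-split inner form of $\bG_\ell^{\sc}$ is $\ell$-independent and readable from Frobenius conjugacy classes is essentially a form of the Mumford--Tate conjecture again; the method of Frobenius tori gives you a maximal torus defined over $\Q$, hence splitting over an unramified extension, but it does not by itself give a $\Q_\ell$-rational Borel.

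The paper's route is genuinely different and is designed precisely to avoid any appeal to a common $\Q$-form.  Faltings' theorem enters only as the dimension equality $(\ast)$: $\dim_{\Q_\ell}\End_{\Gamma_\ell}(\Q_\ell^n)=\dim_{\F_\ell}\End_{G_\ell}(\F_\ell^n)$.  From this the paper extracts, via Proposition~\ref{Endss}, the analogous commutant equality for the derived groups.  Independently, \cite{Hu15} gives $\rk\finiteAlgebraicS_\ell=\rk\bG_\ell^{\der}$ (equality of \emph{formal characters}, not root data).  One then passes to a totally ramified extension $F/\Q_\ell$ over which a hyperspecial model $\GroupSchemeO/O_F$ exists (Corollary~\ref{corcompare}), reduces mod $\lambda$, and obtains a chain $G_\ell'\subset H_\ell\subset I_\ell\subset\GL_n(\F_\ell)$ of groups with the \emph{same} rank and the \emph{same} commutant.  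The Borel--de~Siebenthal theorem then forces their Nori envelopes to coincide, which yields $\dim_\ell \GroupSchemeZ(\F_\ell)=\dim \GroupSchemeZ_{\F_\ell}$ and hence semisimplicity of the special fibre of the $\Z_\ell$-model $\GroupSchemeZ$ already sitting under $\Gamma_\ell^{\sc}$.  In other words, unramifiedness of $\bG_\ell^{\sc}$ over $\Q_\ell$ is an \emph{output} of the argument (Corollary~\ref{1cor}), not an input as in your plan; and the matching of $\finiteAlgebraicS_\ell$ with the special fibre is forced by rank-plus-commutant rigidity rather than by any a priori identification of root data.
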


It is proved in \cite[Theorem~3.17]{La95} that the assertion on $\Gamma_\ell^{\sc}$ holds 
for a density $1$ subset of primes $\ell$.  These primes lie in an infinite union of sets defined by
Chebotarev-type conditions, and there seems no hope of showing by this method 
that this thin set of possible exceptional primes is in fact finite.
In \cite[Theorem~1]{HL16}, we proved Conjecture \ref{Laconj} for ``type A'' Galois representations.
What is special about type A is that semisimple groups of this type contain
no proper semisimple subgroups of equal rank.  For instance, a new idea would be needed to 
rule out possibilities like
$$\Gamma_\ell^{\sc} = \{\gamma\in \Sp_{2n}(\Z_\ell)\mid \bar\gamma\in\Sp_{2m}(\F_\ell)\times \Sp_{2n-2m}(\F_\ell)\},$$
where $\bar \gamma$ denotes (mod $\ell$) reduction.  To rule out this kind of behavior, we introduce a new hypothesis ($\ast$) below.

\subsection{Main results of the paper}
It is convenient to replace $K$ with a finite extension in order that we may assume that $\bG_\ell$ is connected for all $\ell$.
Denote by $\rho_\ell^{\ss}$ the semisimplification of $\rho_\ell$ and by 
$\bG_\ell^{\red}$ the quotient of $\bG_\ell^\circ$ by its unipotent radical. 
The group $\bG_\ell^{\red}$ is also the image of $\bG_\ell^\circ$ 
under the semisimplification of $H^i(X_{\overline K},\Q_\ell)$.
The image of $H^i(X_{\overline K},\Z_\ell)$ is a lattice in  $H^i(X_{\overline K},\Q_\ell)$. Let 
$$\finiteRhoSS:\Gal_K\to \GL_n(\F_\ell)$$ 
be the \emph{semisimple reduction}
of $\rho_\ell$, i.e., the semisimplification of the representation obtained by reducing $\rho_\ell$ modulo $\ell$,
and denote the image of $\finiteRhoSS$ by $\finiteG_\ell$.  By the Brauer-Nesbitt theorem, this does not
depend on the choice of lattice.
For every sufficiently large $\ell$, 
there exists a connected reductive subgroup $\finiteAlgebraicG_\ell$ 
(called \emph{the algebraic envelope}\footnote{The algebraic envelope is denoted by $\bar{\bG}_\ell$ in \cite{Hu15}.  
Here we follow the notation of \cite{Se86a}.}  of $\finiteG_\ell$) 
of $\GL_{n,\F_\ell}$ such that $\finiteG_\ell$ is a subgroup of $\uG_\ell(\F_\ell)$ 
of index bounded above by a constant independent of $\ell$ (see $\mathsection\ref{41}$).

The central result in this paper is Theorem \ref{main} below,
which gives an arithmetic condition equivalent to Conjecture \ref{Laconj}.

\begin{thm}\label{main}
Let $\{\rho_\ell\}_\ell$ be the system of $\ell$-adic representations
arising from the $i$th $\ell$-adic cohomology of a proper smooth variety $X$
defined over a number field $K$ such that $\bG_\ell$ is connected for all $\ell$. 
Then for sufficiently large $\ell$, the subgroup 
$\Gamma_\ell^{\sc}$ is a hyperspecial maximal compact subgroup in $\bG_\ell^{\sc}(\Q_\ell)$ (and $\bG_\ell^{\red}$ is unramified)  
for all sufficiently large $\ell$ if and only if the commutants of
$\Gamma = \rho_\ell^{\ss}(\Gal_K)$ and $G = \finiteRhoSS(\Gal_K)$ on the ambient spaces have the same dimension:
\begin{equation*}
\tag{$\ast$} 
 \dim_{\Q_\ell}(\End_{\Gamma}(\Q_\ell^n))=\dim_{\F_\ell}(\End_{G}(\F_\ell^n)). 
\end{equation*}

\end{thm}

\vspace{.1in}

An even dimensional, projective smooth, simply connected variety $Y$ defined over $K$
is said to be \emph{hyperk$\ddot{a}$hler} if the space of holomorphic $2$-forms $H^0(Y(\C), \Omega_{Y(\C)}^2)$ 
is of dimension one and generated by a form that is non-degenerate everywhere on $Y(\C)$. Examples include Hilbert schemes of points on $K3$ surfaces (including K3 surfaces themselves)
and generalized Kummer varieties.
By using Theorem \ref{main}, we prove the following.

\begin{thm}\label{main2}
Let $\{\rho_\ell\}_\ell$ be the system of $\ell$-adic representations
arising from the $i$th $\ell$-adic cohomology of a proper smooth variety $X$ defined over a subfield $K$ 
of $\C$ that is finitely generated over $\Q$.
For all sufficiently large $\ell$, the group $\Gamma_\ell^{\sc}$ is a hyperspecial maximal compact subgroup
of $\bG_\ell^{\sc}(\Q_\ell)$ and $\bG_\ell^\circ$ is reductive and unramified
under either of the following hypotheses:
\begin{enumerate}[(a)]
\item $X$ is an abelian variety.
\item $X$ is a hyperk$\ddot{a}$hler variety and $i=2$. 
\end{enumerate}
\end{thm}

\begin{remark}
In the special case that Serre originally considered  \cite{Se72}, it takes some additional work to deduce his result from Theorem~\ref{main2} (a).  Namely, one must
translate between the Galois action on the Tate module and the dual action on $H^1$, note that $\bar\rho_\ell = \bar\rho^{\ss}_\ell$ for all $\ell$ sufficiently large,
and check that $\det\rho_\ell$ is surjective.
Serre also presented his $\ell$-adic result as  a consequence of an ``adelic openness'' result: 
the image of $\Gal_K$ in $\GL_2(\hat\Z)$ is open. Theorem \ref{main2} also has an ``adelic'' version, 
which requires some care to state since we do not know that the groups $\bG_\ell$ come from a common
algebraic group over $\Q$.  Details are given in \cite{HL15}.  
\end{remark}

We have already mentioned that Wintenberger proved (a) in the key case that $K$ is a number field and $i=1$.
In the case of K3 surfaces, Theorem~\ref{main2}(b) is due to Cadoret and Moonen \cite[Theorem~B]{CM19}, conditional on the Mumford-Tate conjecture.

\subsection{Ingredients and structure of the paper}
The key ingredient in proving Theorem~\ref{main} is 
the following purely group-theoretic result:

\begin{thm}\label{gpmain}
Let $\bG\subset \GL_{n,\Q_\ell}$ be a connected reductive subgroup,
$\uG\subset \GL_{n,\F_\ell}$ a connected reductive subgroup with $\uG^{\der}$ as derived group and $\uZ$ as connected center, 
$\Gamma$ a closed subgroup of $\bG(\Q_\ell)\cap \GL_n(\Z_\ell)$, and
$\rd\colon \Gamma\to \GL_n(\F_\ell)$ a semisimple continuous representation with $G:=\phi(\Gamma) \subset \uG(\F_\ell)$.
Assume that this data satisfies the following conditions.
\begin{enumerate}[(a)]
\item The subgroup $\Gamma$ is Zariski-dense in $\bG$.
\item There is an equality of semisimple ranks: $\rank \bG^{\der} = \rank \uG^{\der}$.
\item The derived group $\uG^{\der}$ is exponentially generated (see $\mathsection\ref{Nori}$).
\item For all $\gamma\in \Gamma$, the (mod $\ell$) reduction of the characteristic polynomial of $\gamma$ is the characteristic polynomial of $\rd(\gamma)$.
\item The index $[\uG(\F_\ell):G]$ is bounded by $k\in\N$.
\item The formal character of $(\finiteAlgebraicZ,\F_\ell^n)$ is bounded by $N\in\N$, 
where $\finiteAlgebraicZ$ is the connected center of $\uG$ (see $\mathsection\ref{formal}$).
\item Condition ($\ast$) holds for $\Gamma$ and $G$, i.e.,
$$\dim_{\Q_\ell} (\End_{\Gamma} (\Q_\ell^n)) = \dim_{\F_\ell} (\End_{G} (\F_\ell^n)).$$
\end{enumerate}
If $\ell$ is sufficiently large in terms of the data in (a)--(g), then
the reduction representation $\Gamma\hookrightarrow\GL_n(\Z_\ell)\to\GL_n(\F_\ell)$ and $\phi$ are conjugate,
$\Gamma^{\sc}$ is a hyperspecial maximal compact subgroup
of $\bG^{\sc}(\Q_\ell)$, and $\bG^{\der}$ is unramified.
Hypotheses (a)--(f) of Theorem~\ref{gpmain}  suffice to imply that 
$\bG^{\der}$ splits over some finite unramified extension of $\Q_\ell$ and is unramified over 
every degree $12$ totally ramified extension of $\Q_\ell$.
\end{thm}

Once Theorem \ref{gpmain} is established, Theorem \ref{main} follows by 
checking that (after restricting $\rho_\ell$ to some open subgroup $\Gal_L$ of $\Gal_K$ and semisimplifying) 
there exist $n,k,N\in\N$ such that the conditions (a)--(f) in Theorem \ref{gpmain} are verified for the monodromy group
$\Gamma := \rho_\ell(\Gal_L)$, $G := \rho_\ell^{\ss}(\Gal_L)$, $\bG := \bG_\ell^\circ$, and $\uG$ is
the algebraic envelope of $\rho_\ell^{\ss}(\Gal_L)$ if $\ell$ is sufficiently large.
This verification uses the main results in \cite{Hu15}. 
Unfortunately, condition (g) is in a different category, and further progress on Conjecture~\ref{Laconj}
seems to require it.

Theorem \ref{main2}(a) is a consequence of Theorem \ref{main},
a (mod $\ell$) version of the Tate conjecture for abelian varieties for $\ell\gg 0$ proved by Faltings \cite[Theorem~4.2]{FW84} for the condition $(\ast)$
in Theorem \ref{main},
and an $\ell$-independence result of algebraic monodromy groups under specialization \cite[Corollary~2.7]{Hu12}.
Theorem \ref{main2}(b) is a consequence of Theorem \ref{main2}(a), the Kuga-Satake construction,
and Andr\'e's results on motivated cycles \cite{An96a,An96b}.  Indeed, we will see 
that the condition that ($\ast$) holds for all sufficiently large $\ell$ is stable under duals, tensor products, and passage to subrepresentations (see Lemma \ref{hypermorph}).

The last claim of Theorem~\ref{gpmain} is  obtained mainly by Bruhat-Tits theory, 
which determines the possibilities for a connected semisimple group $\bG/\Q_\ell$ whose group of $\Q_\ell$-points
contains a maximal compact subgroup whose total $\ell$-rank (see $\mathsection2.3$) equals the rank of $\bG$.

To indicate the idea behind the rest of Theorem \ref{gpmain}, we consider a particularly favorable case. 
Suppose that $n=2g$ is even, $\Gamma\subset \GSp_{2g}(\Z_\ell)$ is
Zariski-dense in $\bG=\GSp_{2g,\Q_\ell}$.
We remark that not all maximal compact subgroups of $\GSp_{2g}(\Q_\ell)$ are of this form $\GSp_{2g}(\Z_\ell)$;
in general, we can achieve an embedding of $\Gamma$ to this kind only after passing to a (totally ramified)
finite extension of $\Q_\ell$, but we assume this for purposes of illustration.
We further assume that the (mod $\ell$) reduction $\Gamma\to \GL_{2g}(\F_\ell)$ is semisimple, so the reduction map can be identified, after conjugation, with $\phi$.
The goal is to show
that the inclusion $\Gamma' := [\Gamma,\Gamma]\subset\Sp_{2g}(\Z_\ell)$ is an equality 
if $\ell$ is large enough with respect to $n$. 
Reducing (mod $\ell$),  we obtain $G'\subset \Sp_{2g}(\F_\ell)$.

By applying Nori's theory to $G'$ (see $\mathsection\ref{Nori}$), 
we obtain a connected algebraic group
$$\finiteAlgebraicS\subset \Sp_{2g,\F_\ell}$$
such that $G'$ is of bounded index, independent of $\ell$, in $\uS(\F_\ell)$.  
However, \ref{gpmain}(e) 
implies that $G'$ is also of bounded index in $\uG^{\der}(\F_\ell)$, and this, given that both $\uS$ and $\uG^{\der}$ are
generated by additive algebraic groups ($\uS$ by construction, $\uG^{\der}$ by \ref{gpmain}(c))
and therefore connected and also, in some sense, of bounded complexity, implies $\uS = \uG^{\der}$ if $\ell$ is sufficiently large.

By \ref{gpmain}(b), the ranks of $\finiteAlgebraicG^{\der}$ and $\bG^{\der} = \Sp_{2g,\Q_\ell}$ coincide, so both must be $g-1$.
Since $\Gamma$ is Zariski dense in $\GSp_{2g}(\Z_\ell)$, which acts irreducibly on $\Q_\ell^{2g}$, the group
$G$ acts irreducibly on $\F_\ell^{2g}$ by ($\ast$).  It follows that $\uG$ and therefore $\uG^{\der}=\uS$ also act irreducibly.

In the inclusion of connected semisimple groups $\finiteAlgebraicS\subset\Sp_{2g,\F_\ell}$, both 
have the same rank by \ref{gpmain}(b) and the same commutant in $\End_{2g,\F_\ell}$, namely, the scalars. 
In characteristic zero, the equality $\uS = \Sp_{2g,\F_\ell}$ 
would be an immediate consequence of the Borel--de Siebenthal Theorem \cite{BdS49};
this is known to hold also in positive characteristic except for characteristic $2$ and $3$ \cite{Gi}.

Now $\finiteAlgebraicS=\Sp_{2g,\F_\ell}$ is simply connected, so it has no proper subgroups of bounded index as $\ell$ grows without bound.
Thus, for sufficiently large $\ell$, we obtain
\begin{equation}\label{e3}
G'=\finiteAlgebraicS(\F_\ell)=\Sp_{2g}(\F_\ell).
\end{equation}
Finally, a result of Serre \cite[Lemme 1]{Se86a}, subsequently generalized by Vasiu
\cite{Va03}, asserts that any closed subgroup of $\Sp_{2g}(\Z_\ell)$ which maps onto $\Sp_{2g}(\F_\ell)$ is all of $\Sp_{2g}(\Z_\ell)$.
Applying this to $\Gamma'$, we get the theorem in this case.

Various group theoretic technicalities arise in implementing this idea in the general case.  
The condition $(\ast)$ gives a loose comparison between the \emph{reductive} groups 
 $\finiteAlgebraicG$ and $\bG$, while what is needed for
the Borel--de Siebenthal Theorem is a comparison between some \emph{semisimple} groups
$\finiteAlgebraicS$ and $\uI$, where the first comes from $G'$ via Nori's construction 
and the second comes from $\bG^{\der}$ via Bruhat-Tits theory.  
Bruhat-Tits theory works best for simply connected
semisimple groups, but it is also useful for the groups we work with to be subgroups of $\GL_n$.  
Much of the technical work in the paper justifies
moving back and forth between a reductive group, its derived group, and the universal cover of the derived group.

Section $2$ assembles results from group theory that are needed in section $3$, 
including Nori's theory, our theory of $\ell$-dimensions and $\ell$-ranks,
Bruhat-Tits theory, and some results about centralizers, formal characters and regular elements.
Section $3$ proves the purely group theoretic Theorem \ref{gpmain}.
Section $4$ presents the main results on the algebraic envelopes $\finiteAlgebraicG_\ell$
and proves Theorems \ref{main} and \ref{main2}.\\

\section*{Acknowledgments}
The second author gratefully acknowledges conversations with Richard Pink twenty years ago in which we developed an approach to proving Theorem~\ref{main} which is similar
in some important respects to that employed in this paper.  Although there are also major differences between this strategy and the methods of this paper, these discussions undoubtedly
influenced the second author's thinking about the problem.

Both authors would like to express their appreciation to the anonymous referee whose pointed comments and questions on an earlier version of this paper led to a major
restructuring of the paper, strengthening the main result and clarifying (we think) the structure of proof.\\

\begin{center}
\textbf{Conventions for schemes and groups}
\end{center}

The symbol $\ell$ always denotes a rational prime.
Suppose $R\to S$ is a homomorphism of commutative rings with unity and $X$ is a scheme over $\mathrm{Spec}(R)$ (or simply $R$). 
Denote the fiber product $X\times_R S:= X\times_{\mathrm{Spec}(R)}\mathrm{Spec}(S)$ also by $X_S$.

A semisimple algebraic group will always be assumed connected.  A simple algebraic group over $F$ is a semisimple group over $F$ which has no proper, connected, closed, normal subgroup defined over $F$.

In order to keep track of the various kinds of groups that arise in this paper, we use the following system. 
An algebraic group over a field $F$ is always assumed to be a smooth affine group variety over $F$.
The bold letters, e.g., $\bG$, $\mathbf{H}$, $\mathbf{S}$, 
denote  algebraic groups over fields of characteristic zero unless otherwise stated.
The underlined letters, e.g., $\finiteAlgebraicG, \finiteAlgebraicH, \finiteAlgebraicS$, 
always denote  algebraic groups over finite fields. 
Given a homomorphism $f:\finiteAlgebraicG\to \finiteAlgebraicH$, we denote by $f(\finiteAlgebraicG)$
the image of $f$ in $\finiteAlgebraicH$ endowed with the unique structure of reduced closed
subscheme; the induced morphism $\finiteAlgebraicG\to f(\finiteAlgebraicG)$ is assumed to be smooth in this paper.
Group schemes over rings of dimension one (e.g., $\Z$, $\Z_\ell$) are denoted by $\cG$ and $\cH$.
Capital Greek letters denote infinite groups, which are generally $\ell$-adic Lie groups, while capital Roman letters denote finite groups.

Simple complex Lie algebras are denoted by $\mathfrak{g}$ and $\mathfrak{h}$.  We identify such algebras with their
Dynkin diagrams, so instead of saying that $\SL_n(\F_q)$ and $\SU_n(\F_q)$ are both of type $A_{n-1}$, we may say they are of type
$\mathfrak g = \mathfrak{sl}_n$.
Letting $\bG$ denote an algebraic group over a field $F$ of any characteristic, 
and $\Gamma\subset\bG(F)$ a subgroup. The rank of $\bG$, denoted by $\rank\bG$, always means the dimension of a maximal torus of $\bG\times_F\overline F$.
We denote by\\

\begin{tabular}{cl}
$\bG^{\der}$  & the derived group of $\bG$,\\
$\bG^\circ$ & the identity component of $\bG$,\\
$\bG^{\ss}$ & the quotient of $\bG^\circ$ by its radical,\\
$\bG^{\sc}$ & the universal cover of $\bG^{\ss}$,\\
$\bG^{\red}$ & the quotient of $\bG^\circ$ by its unipotent radical,\\
$\dim\bG$ & the dimension of $\bG$ as an $F$-variety,\\
$\rk\mathfrak{g}$ & the rank of the simple Lie algebra $\mathfrak{g}$,\\
$\dim\mathfrak{g}$ & the dimension of $\mathfrak{g}$ as a $\C$-vector space,\\
$\Gamma^{\ss}$ & the image of $\Gamma^\circ:=\Gamma\cap\bG^\circ(F)$ under the quotient map  $\bG^\circ\to \bG^{\ss}$,\\
$\Gamma^{\sc}$ & the preimage of $\Gamma^{\ss}$ under the map $\bG^{\sc}(F)\to \bG^{\ss}(F)$,\\
$M_n(R)$ & the ring of $n$ by $n$ matrices with entries in a ring $R$,\\
$\GL_n(R)$ & the group of units of $M_n(R)$.\\
\end{tabular}

\section{Group theoretic preliminaries}
\subsection{Nori's theory}\label{Nori}
Let $n$ be a natural number and suppose that 
$\ell\geq n$. Let $\finiteG$ be a subgroup of $\GL_n(\F_\ell)$. 
Nori's theory \cite{No87} 
produces a connected $\F_\ell$-algebraic subgroup $\finiteAlgebraicS$ of $\GL_{n,\F_\ell}$ 
that approximates $\finiteG$ if $\ell$ is larger than a constant depending only on $n$. 


Let $\finiteG[\ell]:=\{x\in \finiteG~|~ x^\ell=1\}$.  
The subgroup of $\finiteG$ generated by $\finiteG[\ell]$ is denoted by $\finiteG^+$ and is normal in $G$. 
Define $\mathrm{exp}$ and $\mathrm{log}$ by
$$\mathrm{exp}(x):=\sum_{i=0}^{\ell-1}\frac{x^i}{i!}\hspace{.2in}\mathrm{and}\hspace{.2in}
\mathrm{log}(x):=-\sum_{i=1}^{\ell-1}\frac{(1-x)^i}{i}.$$
Denote by $\finiteAlgebraicS$ the (connected) algebraic subgroup of $\mathrm{GL}_{n,\F_\ell}$, defined over $\mathbb{F}_\ell$, generated by the one-parameter subgroups
\begin{equation}\label{one}
t\mapsto x^t:=\mathrm{exp}(t\cdot\mathrm{log}(x))
\end{equation}
for all $x\in \finiteG[\ell]$. 
The $\F_\ell$-subgroup $\finiteAlgebraicS$ is called the \emph{Nori group} of $G\subset\GL_n(\F_\ell)$.
An algebraic subgroup of $\GL_{n,\F_\ell}$ is said to be \textit{exponentially generated}
if it is generated by the one-parameter subgroups $x^t$ in \eqref{one} for some set of unipotent elements $x\in\GL_n(\F_\ell)$.
 Since $\finiteAlgebraicS$ is exponentially generated, $\finiteAlgebraicS$ is an extension of a semisimple group by a unipotent group \cite[$\mathsection3$]{No87}. 
If $y\in M_n(\F_\ell)$  commutes with $x$, then it also commutes with $\log x$ and therefore with the algebraic group $x^t$.  Thus, 
\begin{equation}
\label{plus-cent}
Z_{\finiteG^+}(M_n(\F_\ell)) = Z_{\finiteAlgebraicS(\F_\ell)}(M_n(\F_\ell)) = Z_{\finiteAlgebraicS(\overline\F_\ell)}(M_n(\overline\F_\ell))\cap M_n(\F_\ell).
\end{equation}
The following theorem approximates $\finiteG^+$ by $\finiteAlgebraicS(\F_\ell)$.

\begin{theorem}
 \label{NTB} 
There is a constant $\ConstA(n)$ depending only on $n$ such that if $\ell>\ConstA(n)$ 
and $\finiteG$ is a subgroup of $\mathrm{GL}_n(\mathbb{F}_\ell)$, then the following assertions hold.
\begin{enumerate}[(i)]
\item If $\finiteAlgebraicS$ is the Nori group of $\finiteG$, then $\finiteG^+=\finiteAlgebraicS(\mathbb{F}_\ell)^+$.
\item If $\uS$ is a Nori group, the quotient
$\finiteAlgebraicS(\mathbb{F}_\ell)/\finiteAlgebraicS(\mathbb{F}_\ell)^+$ is a commutative group of order $\leq 2^{n-1}$.
\item If $\uG\subset\GL_{n,\F_\ell}$ is exponentially generated, then the Nori group of $G = \uG(\F_\ell)$ is $\uG$.
\end{enumerate}
\end{theorem}

\begin{proof}
This is due to Nori: parts (i) and (iii) from \cite[Theorem B]{No87} and part (ii) from \cite[Remark 3.6]{No87}.
\end{proof}

A theorem of Jordan \cite{Jo78} says that every finite subgroup $\finiteG$ of $\GL_n(\C)$ has an abelian subgroup $Z$ such that 
the index $[\finiteG: Z]$ is bounded by a constant depending only on $n$. The following theorem is a variant of Jordan's theorem 
in positive characteristic.

\begin{thm}\cite[Theorem C]{No87} \label{NTC}
Let $\finiteG$ be a subgroup of $\GL_n(\F)$, where $\F$ is a finite field of characteristic $\ell\geq n$. Then $\finiteG$ has a commutative subgroup $Z$ of prime to $\ell$ order such that $Z\cdot\finiteG^+$ is normal in $\finiteG$ and 
$$[\finiteG:  Z\cdot\finiteG^+]\leq \ConstB(n),$$
where $\ConstB(n)$ is a constant depending only on $n$ (and not on $\F$, $\ell$, $\finiteG$).
\end{thm}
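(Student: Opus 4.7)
The plan is to reduce to Jordan's classical theorem in characteristic zero via Nori's algebraic envelope $\finiteAlgebraicS$ from Theorem~\ref{NTB}. The key idea is that $\finiteG^+$ absorbs all the characteristic-$\ell$ behaviour, so $\finiteG/\finiteG^+$ is of order prime to $\ell$ and amenable to Brauer lifting; a Jordan-abelian subgroup of $\finiteG/\finiteG^+$ of bounded index should then lift back to the desired $Z$.

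First I would establish that $[\finiteG : \finiteG^+]$ is coprime to $\ell$. Since $\ell \geq n$, any $u \in \GL_n(\F)$ of $\ell$-power order satisfies $u^\ell = 1$: its unipotent Jordan component has $(u-1)^n = 0$, and the binomial theorem in characteristic $\ell$ gives $u^\ell = 1 + (u-1)^\ell = 1$. Hence every Sylow $\ell$-subgroup of $\finiteG$ lies inside $\finiteG[\ell] \subset \finiteG^+$. Next I would note that $\finiteG$ normalizes $\finiteAlgebraicS$, because conjugation by $g \in \finiteG$ permutes $\finiteG[\ell]$ and sends $x^t$ to $(gxg^{-1})^t$, hence preserves the generating one-parameter subgroups of $\finiteAlgebraicS$. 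Setting $H := \finiteG \cap \finiteAlgebraicS(\F)$, the subgroup $H$ is normal in $\finiteG$, contains $\finiteG^+ = \finiteAlgebraicS(\F)^+$ by Theorem~\ref{NTB}(i), and satisfies $[H : \finiteG^+] \leq 2^{n-1}$ by Theorem~\ref{NTB}(ii); the residual index $[\finiteG : H]$ is bounded in terms of $n$ by controlling the image of $\finiteG/H$ in $\mathrm{Out}(\finiteAlgebraicS)$, whose order is bounded by the symmetries of root data of rank at most $n$, modulo a centralizer contribution that is also bounded because $\finiteAlgebraicS \subset \GL_{n,\F}$ acts faithfully in the given $n$-dimensional representation.

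The third step, which I expect to be the main obstacle, is to produce the commutative subgroup $Z$ itself. The natural candidate is to apply Jordan's classical theorem to the prime-to-$\ell$ quotient $Q := \finiteG/\finiteG^+$ lifted to characteristic zero via Brauer's theory, which works because $\gcd(|Q|, \ell) = 1$; this produces an abelian normal subgroup $\bar Z \trianglelefteq Q$ of index $\leq C_2(n)$. The difficulty is then to lift $\bar Z$ to a subgroup $Z \subset \finiteG$ which is both commutative and of prime-to-$\ell$ order and whose image in $Q$ is $\bar Z$. Schur--Zassenhaus does not split the extension $1 \to \finiteG^+ \to \finiteG \to Q \to 1$ directly, because $|\finiteG^+|$ and $|Q|$ can share prime divisors other than $\ell$. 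I would therefore work inside the preimage $\pi^{-1}(\bar Z) \subset \finiteG$ and exploit the group-of-Lie-type structure of $\finiteAlgebraicS(\F)^+$, extracting via a Hall-subgroup argument for an appropriate set of primes a commutative prime-to-$\ell$ complement $Z$ of $\finiteG^+$ inside $\pi^{-1}(\bar Z)$. By construction $Z \cdot \finiteG^+ = \pi^{-1}(\bar Z)$ is normal in $\finiteG$ with $[\finiteG : Z \cdot \finiteG^+] = [Q : \bar Z] \leq C_2(n)$, as required.
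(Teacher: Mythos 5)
The paper does not prove this statement; it is quoted verbatim from \cite[Theorem C]{No87} with only the remark that the prime-to-$\ell$ condition on $Z$ is asserted in Nori's introduction. So there is no internal proof to compare against, and I will just assess your argument on its own terms.

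Your first step and the observation that $\finiteG$ normalizes $\finiteAlgebraicS$ are fine, but the second step contains a genuine error that undermines the rest: the index $[\finiteG : \finiteG\cap\finiteAlgebraicS(\F)]$ is \emph{not} bounded in terms of $n$. Take $\finiteG=\GL_n(\F)$, so $\finiteG^+=\SL_n(\F)$ and $\finiteAlgebraicS=\SL_{n,\F}$; then $H=\SL_n(\F)$ and $[\finiteG:H]=|\F^*|$, unbounded as $|\F|\to\infty$. The ``centralizer contribution'' you wave off is precisely the culprit: $Z_{\GL_{n,\F}}(\finiteAlgebraicS)$ is a positive-dimensional reductive group (in the example, $\mathbb{G}_m$), and its $\F$-points are unbounded. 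This is not a minor gap but the whole point of the theorem---the commutative subgroup $Z$ in the conclusion is allowed to be large precisely because it must absorb $\finiteG\cap Z_{\GL_n(\F)}(\finiteAlgebraicS)$. What \emph{is} bounded is the index of $\finiteG\cap(\finiteAlgebraicS\cdot Z_{\GL_{n,\F}}(\finiteAlgebraicS))(\F)$ in $\finiteG$ (via $\mathrm{Out}(\finiteAlgebraicS)$ and component-group bounds), not $[\finiteG:H]$.

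The third step has a second gap: you want to apply Jordan (via Brauer lifting) to the quotient $Q=\finiteG/\finiteG^+$. But $Q$ carries no natural faithful representation of bounded dimension---the ambient $n$-dimensional representation of $\finiteG$ does not factor through $Q$, and a faithful complex representation of an arbitrary prime-to-$\ell$ group $Q$ may have dimension on the order of $|Q|$, which is unbounded. Jordan's theorem then gives an index bound depending on $|Q|$, which is useless. The fix, consistent with Nori's actual argument, is to apply Jordan not to the quotient but to the \emph{subgroup} $\finiteG\cap Z_{\GL_n(\F)}(\finiteAlgebraicS)$. This is a subgroup of $\GL_n(\F)$ (so the $n$-dimensional representation is available), and it has prime-to-$\ell$ order: any element of order $\ell$ in it would generate a one-parameter unipotent subgroup lying in both $\finiteAlgebraicS$ and its centralizer, hence in the finite center of $\finiteAlgebraicS$, forcing it to be trivial. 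Brauer lifting applies to this honest subgroup and produces a normal abelian subgroup $Z$ of index bounded by $C_2(n)$, and then $Z\cdot\finiteG^+$ does the job; the Schur--Zassenhaus/Hall manoeuvre you anticipate at the end is not needed once you work with the subgroup rather than the quotient.
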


Note that the statement of \cite[Theorem C]{No87} does not explicitly assert that the order of $ Z$ is prime to $\ell$, but this fact is stated in the introduction to the paper
and is immediate from the construction of $Z$ \cite[p.~270]{No87}.
Nori's theory does a good job of algebraically approximating the ``semisimple'' and ``unipotent'' parts of a finite subgroup of of $\GL_n(\F_\ell)$, but not the toric part;
in general, there is no reason to expect, for large $\ell$, that $Z$ will be well approximated by the $\F_\ell$-points of \emph{any} torus in $\GL_{n,\F_\ell}$.  For Theorem~\ref{gpmain}, we hypothesize that it can be well approximated, moreover, by a torus whose complexity is bounded in a sense to be made precise in \S2.6.    For images of (mod $\ell$) Galois representations arising from cohomology of a given projective non-singular variety, we will see that this additional hypothesis holds.

We have the following result due to Serre for $\finiteAlgebraicS$ if $\finiteG$ acts semisimply on the ambient space.

\begin{prop}
\label{serre}
 (see e.g., \label{ssaction}\cite[Proposition 2.1.2]{Hu15})
Suppose $\finiteG$ acts semisimply on $\F_\ell^n$. There is a constant $\ConstC(n)$ depending only on $n$ such that  if $\ell>\ConstC(n)$, then the following assertions hold.
\begin{enumerate}
\item[(i)] The Nori group $\finiteAlgebraicS$ is a semisimple $\mathbb{F}_\ell$-subgroup of $\mathrm{GL}_{n,\mathbb{F}_\ell}$.
\item[(ii)] The representation $\finiteAlgebraicS\to \mathrm{GL}_{n,\mathbb{F}_\ell}$ is semisimple.
\end{enumerate}
\end{prop}

\subsection{Galois cohomology}\label{s2.2}

We begin with an estimate in Galois cohomology.

\begin{prop}\label{H1}
For $k\in\N$ there exists a constant $\ConstD(k)$ depending only on $k$ such that if
$F$ is a finite extension of $\Q_\ell$ with  $\ell > k$ 
and  $\bC$ is a finite commutative group scheme over $F$ with $|\mathbf{C}(\overline F)| \le k$, then 
$$|H^1(F,\mathbf{C}(\overline F))| \le \ConstD(k).$$
\end{prop}

\begin{proof}
Consider the exact sequence of abelian groups
$$1\to H^1(\Gal(L/F),\mathbf{C}(\overline F))\to H^1(F,\mathbf{C}(\overline F))\to H^1(L,\mathbf{C}(\overline F))$$
by the inflation-restriction exact sequence, where $\Gal_L$ acts trivially on $\mathbf{C}(\overline F)$
and $[L:F]\leq k!$.
Since the size of $H^1(\Gal(L/F),\mathbf{C}(\overline F))$
is bounded above by some constant depending only on $k$,
it suffices to bound $H^1(L,\mathbf{C}(\overline F))$. 
Let $S$ be the set of abelian extensions 
of $L$ of degree bounded above by $k$. For every element $\phi$ of 
$$H^1(L,\mathbf{C}(\overline F))\cong\mathrm{Hom}(\Gal_L,\mathbf{C}(\overline F))\cong\mathrm{Hom}(\Gal_L^{\mathrm{ab}},\mathbf{C}(\overline F)),$$
$\ker\phi$ corresponds to an element of $S$. 
Since $|\mathbf{C}(\overline F)|\leq k$, we have
$$|\mathrm{Hom}(\Gal_L^{\mathrm{ab}},\mathbf{C}(\overline F))|\leq |S|\cdot k!.$$
Let $\F_q$ be the residue field of $L$.
By local class field theory, $S$ corresponds to the set of 
open subgroups $U$ of 
$$L^*=O_L^*\times\Z=\text{pro-}\ell\times\F_q^*\times\Z$$
such that $[L^*:U]\leq k$. Hence, the possibilities of $U$ is bounded above by some constant depending only on $k$ if $\ell>k$.
\end{proof}

\begin{cor}\label{index}
Let $F$ be a finite extension of $\Q_\ell$ and $\alpha:\bG\to\mathbf{H}$
 a central isogeny of degree $\le k$ of connected reductive groups over $F$.
If $\Gamma$  a subgroup of $\mathbf{H}(F)$, then the quotient
$$\Gamma/\alpha(\alpha^{-1}(\Gamma)\cap\bG(F))$$
is an abelian group with size bounded above by $\ConstD(k)$ if $\ell>k$.
\end{cor}

\begin{proof}
Consider the long exact sequence in Galois cohomology
$$1\to\mathbf{C}(F)\to\bG(F)\to\mathbf{H}(F)\to H^1(F,\mathbf{C}(\overline F))\to\cdots,$$
where $\bC := \ker \alpha$.  
The claim is an immediate consequence of Proposition \ref{H1}.
\end{proof}

\begin{prop}
\label{inner}
Let $\bG$ be a simply connected semisimple group that is an inner twist 
of a split group over a finite extension $F$ of $\Q_\ell$ and 
$d$ denote the order of the center of $\bG(\overline F)$.
For every finite extension $F'$ of $F$
of degree divisible by $d$, the group $\bG$ splits over $F'$.
\end{prop}

\begin{proof}
Let $\bG_0$ be the split form of $\bG$, and let $\bC_0$ denote the center of $\bG_0$.
Now $\bG$ is the twist of $\bG_0$ by a class in $H^1(F,\bG_0(\overline F)/\bC_0(\overline F))$.  The non-abelian cohomology sequence of the central extension
$$1\to \bC_0(\overline F)\to \bG_0(\overline F) \to \bG_0(\overline F)/\bC_0(\overline F)\to 1$$
gives an  exact sequence
$$H^1(F,\bG_0(\overline F)) \to H^1(F,\bG_0(\overline F)/\bC_0(\overline F)) \to H^2(F,\bC_0(\overline F)).$$
%
Thus, since $H^1(F,\bG_0(\overline F)) = 0$ by \cite[Th\'eor\`eme~4.7]{BT}, it suffices to prove that $d$ divides $[F':F]$ implies that
$$H^2(F,\bC_0(\overline F)) \to H^2(F',\bC_0(\overline F))$$
is the zero map.  As $\bG_0$ is split, $\bC_0$ is a product of groups of the form $\mu_n$ where $n$ divides $d$.  Thus, it suffices to prove that every class in $\Br(F)_n$ lies in $\ker (\Br(F)\to \Br(F'))$ for 
every extension $F'/F$ such that $d$ divides $[F':F]$.  This follows from the fact \cite[XIII~Proposition~7]{SeLF} that at the level of invariants, the map $\Br(F)\to \Br(F')$ is just multiplication by $[F':F]$.
\end{proof}

\subsection{$\ell$-dimension and $\ell$-ranks}\label{s2.3}
In this subsection, we review the definitions of the $\ell$-dimension and the $\ell$-ranks 
(i.e., the total $\ell$-ranks and the $\mathfrak{h}$-type $\ell$-rank for varying simple Lie type $\mathfrak{h}$) 
of finite groups and profinite groups with open pro-solvable subgroups \cite{Hu15,HL16} and state the results relating the dimension 
and the ranks of algebraic group $\finiteAlgebraicG/\F_q$ to respectively the $\ell$-dimension and the $\ell$-ranks of $\finiteAlgebraicG(\F_q)$ \cite{HL16}. 

\subsubsection{}
Let $\finiteG$ be a finite simple group of Lie type in characteristic $\ell\geq 5$. 
The condition on $\ell$ rules out the possibility of Suzuki or Ree groups,
so there exists an (unique) adjoint simple group $\finiteAlgebraicG/\F_{\ell^f}$ so that
$$\finiteG=[\finiteAlgebraicG(\F_{\ell^f}),\finiteAlgebraicG(\F_{\ell^f})] = \im\bigl(\uG^{\sc}(\F_{\ell^f})\to \uG(\F_{\ell^f})\bigr).$$
We define the $\ell$-dimension of $\finiteG$ to be 
$$\dim_\ell\finiteG:=f\cdot\dim\finiteAlgebraicG.$$
%

Let $\mathfrak{g}$ denote the unique simple complex
Lie algebra whose root system is a factor of the root system of $\uG_{\overline\F_\ell}$.
If $\mathfrak{h}$ is a simple complex Lie algebra, the $\mathfrak{h}$-type $\ell$-rank of $G$ is
\begin{equation*}
\rk^{\mathfrak{h}}_\ell\finiteG :=\left\{ \begin{array}{lll}
 f\cdot\rank \finiteAlgebraicG &\mbox{if}\hspace{.1in} \mathfrak{h}=\mathfrak{g},\\
 0 &\mbox{otherwise.}
\end{array}\right.
\end{equation*}

\vspace{.1in}
For example, $\PSL_n(\F_{\ell^f})$ (resp. $\PSU_n(\F_{\ell^f})$) has $f(n^2-1)$ as the $\ell$-dimension, 
$f(n-1)$ as $A_{n-1}$-type $\ell$-rank.

For simple groups which are not of Lie type in characteristic $\ell$ 
(including simple groups of order less than $\ell$ and abelian simple groups like $\Z/\ell\Z$), 
we define the $\ell$-dimension and $\mathfrak{h}$-type $\ell$-rank to be zero.  
We extend the definitions
to arbitrary finite groups $G$ by defining the $\ell$-dimension (resp. $\mathfrak{h}$-type $\ell$-rank) 
to be the sum of the $\ell$-dimensions (resp. $\mathfrak{h}$-type $\ell$-ranks)
of its composition factors.
We define the total $\ell$-rank of $G$ to be
$$\rk_\ell G := \sum_{\mathfrak{h}} \rk_\ell^{\mathfrak{h}},$$
where the sum is taken over all simple complex Lie algebras.

This makes it clear that $\dim_\ell$, $\rk_\ell^{\mathfrak{h}}$, and $\rk_\ell$ are additive on short exact sequences of groups.  In particular, the $\ell$-dimension and the total $\ell$-rank of
every solvable finite group are zero, and neither passing to a central extension nor to the derived group affects the $\ell$-dimensional or any $\ell$-rank.  
For instance the $\ell$-dimension and $\ell$-rank of $\GL_n(\F_{\ell^f})$, $\PGL_n(\F_{\ell^f})$ and $\PSL_n(\F_{\ell^f})$ are all the same.

Our basic results on $\dim_\ell$, $\rk_\ell^{\mathfrak{h}}$, and $\rk_\ell$ of finite groups are the following.

\begin{lem}
\label{eqrank}
For $k\in\N$ there exists a constant $\ConstCA(k)$ such that if $\ell > \ConstCA(k)$
and $H$ is a subgroup of $G$ of index $\le k$, then the $\ell$-dimension and $\ell$-ranks of $G$ and $H$ are the same.
\end{lem}

\begin{proof}
At the cost of replacing $k$ by $k!$, we may assume $H$ is normal in $G$.  If $\ell$ is large enough
the $\ell$-dimension and $\ell$-ranks of $G/H$ are $0$, and the lemma follows by additivity.
\end{proof}

\begin{prop}\label{DR0}
Let $\finiteG$ be a subgroup of $\GL_n(\F_\ell)$ and $\finiteAlgebraicS$ the Nori group of $\finiteG$. There exists a constant $\ConstE(n)$ depending only on $n$ such that if $\ell>\ConstE(n)$,
then the $\ell$-dimension and the $\ell$-ranks of $\finiteG$ and $\finiteAlgebraicS(\F_\ell)$ are identical.  
\end{prop}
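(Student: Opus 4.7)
The plan is to use Nori's theorems to sandwich both $\finiteG$ and $\finiteAlgebraicS(\F_\ell)$ around a common normal subgroup, and then conclude by the additivity of the $\ell$-invariants on short exact sequences. By Theorem \ref{NTB}(i), for $\ell>C_1(n)$ we have $\finiteG^+ = \finiteAlgebraicS(\F_\ell)^+$, so it suffices to show that the two quotients $\finiteG/\finiteG^+$ and $\finiteAlgebraicS(\F_\ell)/\finiteAlgebraicS(\F_\ell)^+$ have $\dim_\ell$, $\rk_\ell^{\mathfrak{g}}$, and $\rk_\ell$ all equal to zero once $\ell$ is sufficiently large.

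The algebraic quotient is easy: Theorem \ref{NTB}(ii) says $\finiteAlgebraicS(\F_\ell)/\finiteAlgebraicS(\F_\ell)^+$ is abelian of order at most $2^{n-1}$. For the finite-group quotient, Theorem \ref{NTC} supplies a commutative prime-to-$\ell$ order subgroup $Z\subset\finiteG$ with $[\finiteG:Z\cdot\finiteG^+]\leq C_2(n)$. This gives an extension
\begin{equation*}
1\to Z\cdot\finiteG^+/\finiteG^+ \to \finiteG/\finiteG^+ \to \finiteG/(Z\cdot\finiteG^+)\to 1
\end{equation*}
whose kernel is a quotient of the abelian prime-to-$\ell$ order group $Z$, and whose cokernel has order at most $C_2(n)$.

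The key observation is that every finite simple group of Lie type in characteristic $\ell$ has order divisible by $\ell$. Hence, once $\ell>\max(C_1(n),\,C_2(n),\,2^{n-1})$, none of the composition factors appearing in either quotient above can be simple of Lie type in characteristic $\ell$: their orders are either prime to $\ell$ or bounded strictly by $\ell$. By definition, every such composition factor contributes $0$ to $\dim_\ell$, $\rk_\ell^{\mathfrak{g}}$, and $\rk_\ell$. Applying additivity of these invariants on the short exact sequences
\begin{equation*}
1\to\finiteG^+\to\finiteG\to\finiteG/\finiteG^+\to 1
\end{equation*}
and the analogous one for $\finiteAlgebraicS(\F_\ell)$ then yields the chain of equalities
\begin{equation*}
\dim_\ell\finiteG = \dim_\ell\finiteG^+ = \dim_\ell\finiteAlgebraicS(\F_\ell)^+ = \dim_\ell\finiteAlgebraicS(\F_\ell),
\end{equation*}
and identically for $\rk_\ell^{\mathfrak{g}}$ and $\rk_\ell$.

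I do not foresee a serious obstacle: once one has the two Nori structure theorems and the composition-factor definition of the $\ell$-invariants, the argument is essentially formal bookkeeping. The only point demanding mild care is arranging that $\ell$ exceeds each of the finitely many $n$-dependent constants $C_1(n)$, $C_2(n)$, and $2^{n-1}$ simultaneously, which is automatic.
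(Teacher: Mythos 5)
Your proof is correct and fills in the details that the paper leaves implicit: the paper's own proof of this proposition is the single sentence ``The assertion follows directly from Theorem \ref{NTB} and \ref{NTC},'' and your argument is precisely the natural unpacking of that assertion. You correctly use Theorem \ref{NTB}(i) to identify $\finiteG^+ = \finiteAlgebraicS(\F_\ell)^+$, reduce to showing the two quotients by $\finiteG^+$ have vanishing $\ell$-invariants, and argue that via Theorem \ref{NTB}(ii) and Theorem \ref{NTC}. One could slightly streamline the finite-group side by noting directly that $Z\cdot\finiteG^+/\finiteG^+$ is solvable (so contributes zero by the convention in $\mathsection$\ref{s2.3}) and $\finiteG/(Z\cdot\finiteG^+)$ has order $\leq C_2(n) < \ell$ so cannot contain a simple composition factor of Lie type in characteristic $\ell$; but this is exactly the content of your ``key observation,'' so the argument is sound.
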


\begin{proof}
The assertion follows directly from Theorems \ref{NTB} and \ref{NTC} and Lemma \ref{eqrank}.
\end{proof}

\begin{prop}\cite[Proposition 4]{HL16}\footnote{The rank of an algebraic group $\bG/F$ in \cite{HL16}
is defined to be the usual rank of $\bG^{\ss}\times_F\overline F$, see \cite[$\mathsection2$]{HL16}.} \label{DR1}
Let $\finiteAlgebraicG$ be a connected algebraic group over $\mathbb{F}_{\ell^f}$ with $\ell\geq5$. 
The composition factors of $\finiteAlgebraicG(\mathbb{F}_{\ell^f})$ are cyclic groups and 
finite simple groups of Lie type in characteristic $\ell$. 
Moreover, let $m_\mathfrak{g}$ be the number of factors of 
$\finiteAlgebraicG^{\sc}_{\overline{\F}_\ell}$ of type $\mathfrak{g}$. Then the following equations hold:
\begin{enumerate}
\item[(i)] $\rk_\ell^\mathfrak{g}(\finiteAlgebraicG(\F_{\ell^f}))=m_\mathfrak{g}f\cdot\rk\mathfrak{g}$;
\item[(ii)] $\rk_\ell(\finiteAlgebraicG(\F_{\ell^f}))=f\cdot\rank\finiteAlgebraicG^{\ss}$ ;
\item[(iii)] $\dim_\ell (\finiteAlgebraicG(\F_{\ell^f}))=f\sum_{\mathfrak{g}}(m_{\mathfrak{g}}\cdot \dim\mathfrak{g})
= f\cdot\dim \finiteAlgebraicG^{\ss}$.
\end{enumerate}
\end{prop}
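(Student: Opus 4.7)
The plan is to reduce the statement to the simply connected semisimple case via canonical filtrations, and then decompose that case into quasisimple Galois-orbit factors and invoke Steinberg's theorem. I set up the filtration $U(\finiteAlgebraicG) \subset R(\finiteAlgebraicG) \subset \finiteAlgebraicG$, in which both $U(\finiteAlgebraicG)$ and $R(\finiteAlgebraicG)/U(\finiteAlgebraicG)$ are connected $\F_q$-subgroups, so Lang's theorem yields short exact sequences on $\F_q$-points with quotients $\finiteAlgebraicG^{\red}(\F_q)$ and $\finiteAlgebraicG^{\ss}(\F_q)$ respectively. The kernels $U(\finiteAlgebraicG)(\F_q)$ and $(R/U)(\F_q)$ are respectively an $\ell$-group (all composition factors cyclic) and a subgroup of a torus (abelian), so both have vanishing $\dim_\ell$ and $\rk_\ell^{\mathfrak{g}}$ for every $\mathfrak{g}$. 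By additivity on short exact sequences, the three invariants of $\finiteAlgebraicG(\F_q)$ agree with those of $\finiteAlgebraicG^{\ss}(\F_q)$. A parallel argument applied to the central isogeny $\pi : \finiteAlgebraicG^{\ss,\sc} \to \finiteAlgebraicG^{\ss}$ — whose kernel is finite central and whose cokernel on $\F_q$-points embeds into the finite abelian group $H^1(\F_q, \ker\pi)$ — further reduces the problem to $\finiteAlgebraicG^{\ss,\sc}(\F_q)$.

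Over $\bar\F_q$, the simply connected semisimple group $\finiteAlgebraicG^{\ss,\sc}$ splits as a direct product of absolutely quasisimple simply connected factors, which $\Gal(\bar\F_q/\F_q)$ permutes. By Galois descent,
$$\finiteAlgebraicG^{\ss,\sc} \;\cong\; \prod_i \Res_{\F_{q^{f_i}}/\F_q} \finiteAlgebraicH_i,$$
where each orbit of size $f_i$ corresponds to an absolutely quasisimple simply connected $\finiteAlgebraicH_i$ defined over $\F_{q^{f_i}}$ of some simple Lie type $\mathfrak{g}_i$, and $\finiteAlgebraicG^{\ss,\sc}(\F_q) = \prod_i \finiteAlgebraicH_i(\F_{q^{f_i}})$. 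Because $\ell \ge 5$ forces $q^{f_i} \ge 5$, Steinberg's theorem applies to each factor without exception: $\finiteAlgebraicH_i(\F_{q^{f_i}})$ is perfect with finite abelian center, and its central quotient is canonically isomorphic to the derived subgroup $[\finiteAlgebraicH_i^{\ad}(\F_{q^{f_i}}), \finiteAlgebraicH_i^{\ad}(\F_{q^{f_i}})]$, a finite simple group of Lie type in characteristic $\ell$. This identifies the composition factors of $\finiteAlgebraicG(\F_q)$ as cyclic groups together with finite simple groups of Lie type, as claimed.

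The three formulas are then a bookkeeping exercise. For each orbit $i$, the simple composition factor matches the definition recalled in Section \ref{s2.3} with ambient adjoint group $\finiteAlgebraicH_i^{\ad}$ over $\F_{\ell^{f f_i}}$ and multiplicity parameter $m = 1$, so it contributes $f f_i \cdot \dim \mathfrak{g}_i$ to $\dim_\ell$, $f f_i \cdot \rk \mathfrak{g}_i$ to $\rk_\ell^{\mathfrak{g}_i}$, and zero to $\rk_\ell^{\mathfrak{g}}$ for $\mathfrak{g} \neq \mathfrak{g}_i$. Summing over $i$, grouping by type, and using the identities $m_{\mathfrak{g}} = \sum_{i :\, \mathfrak{g}_i = \mathfrak{g}} f_i$, $\sum_{\mathfrak{g}} m_{\mathfrak{g}} \rk \mathfrak{g} = \rk \finiteAlgebraicG^{\ss} = \rk \finiteAlgebraicG$, and $\sum_{\mathfrak{g}} m_{\mathfrak{g}} \dim \mathfrak{g} = \dim \finiteAlgebraicG^{\ss}$, all three claims (i)--(iii) follow at once. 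The one place where care is needed is in verifying that no Steinberg exception (the small-$q$, small-rank cases where $\finiteAlgebraicH_i(\F_{q^{f_i}})$ fails to be perfect or simple-modulo-center) can arise for any quasisimple factor; this is precisely the role of the hypothesis $\ell \ge 5$, and it is the only obstacle of a non-structural character in the proof.
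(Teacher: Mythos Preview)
The paper does not give its own proof of this proposition: it is stated with the citation \cite[Proposition~3]{HL16} and no argument is supplied here. So there is nothing in the present paper to compare against line by line.

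That said, your proof is correct and is the standard argument one would expect in \cite{HL16}. The reduction via the filtration $U(\finiteAlgebraicG)\subset R(\finiteAlgebraicG)\subset \finiteAlgebraicG$ together with Lang's theorem is exactly right; the kernels on $\F_q$-points are an $\ell$-group and an abelian group respectively, hence contribute nothing to $\dim_\ell$ or any $\rk_\ell^{\mathfrak g}$. The further passage to $\finiteAlgebraicG^{\ss,\sc}$ via the central isogeny, with finite abelian kernel and cokernel on $\F_q$-points, is likewise clean. The decomposition of a simply connected semisimple group over $\F_q$ as a product of Weil restrictions of absolutely quasisimple groups is standard Galois descent, and your invocation of Steinberg's theorem (this is \cite{Steinberg} in the paper's bibliography) for $\ell\ge 5$ correctly rules out all the small-field exceptions. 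The bookkeeping at the end, in particular the identity $m_{\mathfrak g}=\sum_{i:\,\mathfrak g_i=\mathfrak g} f_i$, is exactly how the count of quasisimple factors over $\bar\F_q$ matches the orbit decomposition, and the final summations are immediate. There is no gap.
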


\subsubsection{}\label{s2.3.2}
Let $F$ be a finite extension of $\Q_\ell$ with the ring of integers $O_F$ and the residue field $\F_q$. The definitions above are extended to certain infinite profinite groups, including compact subgroups of $\GL_n(F)$, as follows.
If $\Gamma$ is a finitely generated profinite group which contains an open pro-solvable subgroup, we define
$$\dim_\ell\Gamma := \dim_\ell(\Gamma/\Delta),\hspace{.1in}
\rk_\ell^{\mathfrak{h}} \Gamma := \rk_\ell^{\mathfrak{h}} (\Gamma/\Delta),\hspace{.1in}\mathrm{and}\hspace{.1in}
\rk_\ell \Gamma := \rk_\ell (\Gamma/\Delta)$$
for any normal, pro-$\ell$, open subgroup $\Delta$ of $\Gamma$. 
The $\ell$-dimension and $\ell$-rank of every pro-$\ell$ group is zero (so, in particular, the $\ell$-dimension of an $\ell$-adic Lie group can be strictly smaller than its dimension in the sense of $\ell$-adic manifolds.)
By additivity,
$$\dim_\ell\Gamma = \dim_\ell\finiteG,\hspace{.1in}
\rk_\ell^{\mathfrak{h}} \Gamma = \rk_\ell^{\mathfrak{h}} \finiteG,\hspace{.1in}\mathrm{and}\hspace{.1in}
\rk_\ell \Gamma = \rk_\ell \finiteG,$$ 
where $\finiteG$
denotes the image in $\GL_n(\F_q)$ under the reduction of $\Gamma$ with
respect to an $O_F$-lattice in $F^n$ stabilized by $\Gamma$.  
If $\Gamma$ is a compact subgroup of $\GL_n(F)$ and $\Delta$ is a closed normal subgroup, then
$$\dim_\ell \Gamma  
= \dim_\ell \Delta + \dim_\ell (\Gamma/\Delta),$$
$$\rk_\ell^{\mathfrak{h}} \Gamma  
= \rk_\ell^{\mathfrak{h}} \Delta + \rk_\ell^{\mathfrak{h}} (\Gamma/\Delta),$$
and
$$\rk_\ell\Gamma=\rk_\ell\Delta+\rk_\ell(\Gamma/\Delta).$$






\begin{lemma}
\label{subgroup}
Let $\Gamma\subseteq \Pi$ be compact subgroups of $\GL_n(\Q_\ell)$ (resp. $\GL_n(\F_\ell)$).  
There exists a constant $\ConstG(n)$ depending only on $n$ such that if $\ell>\ConstG(n)$, then
\begin{align*}
\rk_\ell\Gamma&\le \rk_\ell \Pi,\\
\dim_\ell\Gamma&\le \dim_\ell \Pi.
\end{align*}
\end{lemma}

\begin{proof}
Fix a  $\Pi$-stable lattice $\Lambda$ in $\Q_\ell^n$.
By the fact that the $\ell$-ranks of pro-solvable groups are zero, 
it suffices to prove the same inequality for the finite groups 
$\finiteG\subseteq \finiteP\subseteq \GL_n(\F_\ell)$ obtained by reducing modulo  $\Lambda$. 
The Nori group of $\finiteG$ is generated by a subset of the 
collection of unipotent groups generating the Nori group of $\finiteP$ and is therefore a closed subgroup of that algebraic group.  Both dimension and semisimple rank of a subgroup of any algebraic group are less than or equal to those of the ambient group, so the lemma follows from Propositions~\ref{DR0} and \ref{DR1}.
\end{proof}

\subsection{Bruhat-Tits theory}\label{s2.4}
We briefly recall some basic facts from  Bruhat-Tits theory, mainly from \cite{Ti79}.  
The main goal of this subsection is Theorem \ref{compare}.

\subsubsection{}\label{241}
Let $F$ be a finite extension of $\Q_\ell$ with residue field $\F_q$ 
and $\bG$ a connected, semisimple algebraic group defined over $F$. 
The Bruhat-Tits building $\mathscr{B}(\bG,F)$ is a 
polysimplicial complex \cite[2.2.1]{Ti79}, endowed with a $\bG(F)$-action that is linear 
on each facet.
If $F'$ is a finite extension of $F$, then there is a corresponding continuous injection of buildings
$$\iota_{F',F}:\mathscr{B}(\bG,F)\to\mathscr{B}(\bG,F'),$$
which is equivariant with respect to $\bG(F)\subset \bG(F')$ and maps vertices of 
$\mathscr{B}(\bG,F)$ to vertices of $\mathscr{B}(\bG,F')$.
If $F\subset F'\subset F''$ are finite extensions of fields, then
$$\iota_{F'',F'}\circ\iota_{F',F}=\iota_{F'',F}.$$

For every point $x\in\mathscr{B}(\bG,F)$, the stabilizer $\bG(F)^x$
is a compact subgroup of $\bG(F)$.
There exists a smooth affine group scheme $\cG_x$ over the ring of integers $O_F$ of $F$
and an isomorphism $i$ from the generic fiber of $\cG_x$ to $\bG$ 
such that $i(\cG_x(O_F)) = \bG(F)^x$ and  
if $F'$ is a finite unramified extension of $F$, then
$$i(\cG_x(O_{F'})) = \bG(F')^{\iota_{F',F}(x)}$$
\cite[3.4.1]{Ti79}. If the special fiber $\cG_{x,\F_q}$ of $\cG_x$ is reductive, we say that $x$ is \emph{hyperspecial}
and $\bG(F)^x$ is a \emph{hyperspecial maximal compact subgroup} (or simply \emph{hyperspecial}) of $\bG(F)$ \cite[3.8.1]{Ti79}.

Every maximal compact subgroup of $\bG(F)$ is the stabilizer $\bG(F)^x$ of 
a point  $x\in \mathscr{B}(\bG,F)$ by \cite[3.2]{Ti79}. 
We may always take $x$ to be the centroid of some facet.
Moreover, if $\bG$ is in addition simply connected, then
$x$ is a vertex \cite[3.2]{Ti79} and the special fiber $\cG_{x,\F_q}$ is connected \cite[3.5.2]{Ti79}.

\subsubsection{}\label{242}
Let $F^{\nr}$ be the maximal unramified extension of $F$ in $\overline F$.
The group $\bG$ determines a map of $\Gal(F^{\nr}/F)$-diagrams: 
the \emph{relative local Dynkin diagram} $\Delta_F$ (i.e., the local Dynkin diagram of $\bG/F$) with trivial $\Gal(F^{\nr}/F)$-action,
the \emph{absolute local Dynkin diagram} $\Delta_{F^{\nr}}$ (i.e., the local Dynkin diagram of $\bG_{F^{\nr}}/F^{\nr}$) 
with an action of $\Gal(F^{\nr}/F)$,
and a $\Gal(F^{\nr}/F)$-map $\Delta_{F^{\nr}}\to \Delta_F$ \cite[1.11]{Ti79}. 
The Dynkin diagram of $\cG_{x,\overline\F_\ell}^{\red}$ (the reductive quotient of $\cG_{x,\overline\F_\ell}$, see conventions for groups) 
can be constructed by 
deleting from $\Delta_{F^{\nr}}$ all the vertices (together with all edges connected to them) 
mapping to the vertices in $\Delta_F$ associated to $x$.
Moreover, if the minimal facet containing $x$ is a chamber (for example when $\bG/F$ is anisotropic, in which case $\Delta_F$ is empty), 
$\cG_{x,\F_q}^{\red}$ is a torus  \cite[3.5.2]{Ti79}. 

A semisimple group $\bG$ over a local field
 $F$ is \emph{unramified} if $\bG$ has a Borel subgroup over $F$
and $\bG$ splits over an unramified extension of $F$.
The group $\bG$ is unramified if and only if $\mathscr{B}(\bG,F)$ has a hyperspecial point 
(see \cite[$\mathsection1.10.2$]{Ti79} for the ``only if" part and \cite[Corollary 5.2.14]{Co14} for the ``if" part).
And the latter one is equivalent to the local Dynkin diagram 
$\Delta_F$ has a hyperspecial vertex \cite[1.9,1.10]{Ti79}.

\subsubsection{}\label{243} The main theorem of this subsection is as follows.

\begin{thm}
\label{compare}
Let $F$ be a finite extension of $\Q_\ell$ with residue degree $f:=[\F_q:\F_\ell]$  and  $\ell\geq 5$.
Let $\bG$ be a  semisimple group of rank $r$ over $F$ and $\Pi$ a maximal compact subgroup of $\bG(F)$.
Then the following assertions hold.

\begin{enumerate}[(i)]
\item The total $\ell$-rank of $\Pi$ is at most $fr$.
\item If  $\rk_\ell\Pi=fr$, then $\bG$ splits over a finite unramified extension of $F$.
\item If  $\rk_\ell\Pi=fr$, then $\bG$ is unramified over every degree $12$ totally ramified extension $F^{\mathrm{t}}/F$.
\item If  $\rk_\ell\Pi=fr$, then there exist a totally ramified extension $F'/F$ 
and a hyperspecial maximal compact subgroup $\Omega\subset\bG(F')$ such that
$\Pi\subset\Omega$.
\end{enumerate}

\end{thm}

\begin{proof}
Since $\Pi^{\sc}$ is maximal compact in $\bG^{\sc}(F)$ and the total $\ell$-rank of $\Pi$ and $\Pi^{\sc}$ are equal, 
we may assume $\bG$ is simply connected.
It therefore factors as a product of groups $\bG_i$ which are simply connected and  simple.
Let $x$ denote a vertex of the building $\mathscr{B}(\bG,F)$ stabilized by $\Pi$
and $\cG_x$ the smooth affine group scheme over $O_F$ in $\mathsection\ref{241}$. 
The building of $\bG$ is the product of the buildings of the $\bG_i$ \cite[\S2.1]{Ti79}, so the vertex $x = (x_1,\ldots,x_k)$, and
$$\cG_x = \prod_i (\cG_i)_{x_i}.$$
As rank is additive in products, it suffices to prove the theorem in the simple case.

Thus, there exist a finite extension $F'/F$ and an absolutely simple group $\bG'/F'$ such that
$\bG = \Res_{F'/F} \bG'$.  Then $\Pi$ is a maximal compact subgroup of $\bG'(F') = \bG(F)$.
Denote the rank of $\bG'$ by $r'$, 
the order of the residue field of $F'$ by $\ell^{f'}$, 
and the ramification degree of $F'/F$ by $e$. 
Since we have
$$r = [F':F] r' = e (f'/f) r',$$
the inequality $\rk_\ell \Pi \le f'r'$ implies $\rk_\ell \Pi\le fr$, with strict inequality if $e>1$.
Let $F^{\mathrm{t}}/F$ be the totally ramified extension described in (iii).
If $e=1$, then it follows that 
\begin{itemize}
\item $F'/F$ is unramified,
\item 
the composition $F'F^{\mathrm{t}}$ is totally ramified over $F'$ of degree $12$, 
\item and $\bG_{F^{\mathrm{t}}}=(\Res_{F'/F}\bG')\times_F F^{\mathrm{t}} = \Res_{F'F^{\mathrm{t}}/F^{\mathrm{t}}}(\bG'\times_{F'} F'F^{\mathrm{t}})$.
\end{itemize}
Hence, if $\bG'$ splits over an unramified extension $F''$ of $F'$, then $\bG$ (resp. $\bG_{F^{\mathrm{t}}}$) 
also splits over $F''$ (resp. $F'' F^{\mathrm{t}}$), which is unramified over $F$ (resp. $F^{\mathrm{t}}$).  
If $\bG'$ is quasi-split over $F'F^{\mathrm{t}}$, it has a Borel subgroup $\bB'$ defined over $F'F^{\mathrm{t}}$,
and the restriction of scalars $\Res_{F'F^{\mathrm{t}}/F^{\mathrm{t}}} \bB'$ is a Borel subgroup of $\bG_{F^{\mathrm{t}}}$.  
Thus, if (i)--(iii) hold for $(\bG',F')$, they hold for $(\bG,F)$, and without loss of generality,
we may assume $\bG$ is absolutely simple.

As the kernel of $\cG_x(O_F)\to \cG_x(\F_{q})$ is pro-$\ell$, the total $\ell$-ranks of $\Pi$ and $\cG_x(\F_{q})$ are equal.
Since $\bG$ is simply connected, $\cG_x(\F_{q})$ is the group of $\F_{q}$-points of an algebraic group which is the extension of the reductive group $\cG_{x,\F_q}^{\red}$
by a unipotent group.  Thus, $\rk_\ell (\cG_x(\F_{q}))$ is $f$ times $\rank\cG_{x,\F_q}^{\ss}$, the semisimple rank of $\cG_{x,\F_q}^{\red}$.  
We claim that this is less than or equal to $fr$, or equivalently, 
\begin{equation}\label{BTineq}
\rank\cG_{x,\F_q}^{\ss}\leq r,
\end{equation}
and with equality only if
$\bG$ splits over an unramified extension and has a Borel over $F^{\mathrm{t}}$ in (iii).

Since $\rank\cG_{x,\F_q}^{\red}$ and the relative rank of $\bG_{F^{\nr}}$ 
(the rank of a maximal $F^{\nr}$-split torus of $\bG_{F^{\nr}}$) are equal \cite[3.5]{Ti79}, 
the inequality \eqref{BTineq} holds in general, which is assertion (i),
and the equality holds only if $\bG$ splits over $F^{\nr}$, which is assertion (ii).
Let $\bG^{\mathrm{sp}}/F$ be a split form of $\bG$. 
By definition, the number of vertices in the absolute local Dynkin diagram $\Delta_{F^{\nr}}$ of $\bG$
is one greater than the relative rank of $\bG_{F^{\nr}}$.
If the equality in \eqref{BTineq} holds, then it follows by (ii) and $\mathsection\ref{242}$ that
\begin{enumerate}[(A)]
\item $\Delta_{F^{\nr}}$ coincides with the (relative) local Dynkin diagram of $\bG^{\mathrm{sp}}/F$,
\item $\Delta_{F^{\nr}}$ contains at least one $\Gal(F^{\nr}/F)$-stable vertex, 
\item and $\bG$ is not anisotropic, i.e., $\Delta_F\neq\emptyset$. 
\end{enumerate}

To list the cases when the conditions (A),(B),(C) hold, we consult the tables \cite[\S\S 4.2--4.3]{Ti79};  
the possible types are all split types in \cite[\S 4.2]{Ti79} together with the following possibilities in \cite[\S 4.3]{Ti79}:
$${}^2A'_n,\ {}^2B_s,\ {}^2C_{2m},\ {}^2D_t,\ {}^2D'_t,\ {}^2 D''_{2s},\ {}^3D_4,\ {}^4D_{2n},\ {}^2E_6,\ {}^3E_6,\ {}^2E_7,$$
where $m\geq1$, $n\geq2$, $s\geq3$, and $t\geq 4$ are integers.
From the tables, every split type has a hyperspecial vertex in $\Delta_F$ and is thus unramified.  Similarly,
the groups ${}^2A'_n$, ${}^2D_t$, ${}^3D_4$, and ${}^2E_6$ also have hyperspecial vertices in $\Delta_F$ and are therefore unramified.

The cases ${}^2C_{2m}$ and ${}^2E_7$ are inner forms of  split groups of types $C_{2m}$ and $E_7$ respectively by (A)
and the fact that $C_{2m}$ and $E_7$ have no non-trivial outer automorphisms.
They therefore split over every even degree extension by
Proposition~\ref{inner} because their $\overline F$-centers are of order $2$. 

Similarly, ${}^3E_6$ is an inner form of a split group of type $E_6$ by (A) 
and the fact that its index ${}^1E_{6,2}^{16}$ has presuperscript $1$ \cite[$\mathsection4.3$]{Ti79} 
(meaning that the image of $\Gal(F^{\nr}/F)\to \Aut(\Delta_{F^{\nr}})$ is of order $1$ \cite[Table II: Indices]{Ti66}).
Thus it splits over every extension whose degree is divisible by $3$ 
by Proposition~\ref{inner} since its $\overline F$-center is of order $3$.

To see that in the remaining cases
${}^2B_s, {}^2D'_t, {}^2 D''_{2s}, {}^4D_{2n}$, the group
$\bG$ becomes unramified over every $F^{\mathrm{t}}$ in (iii), we examine the explicit descriptions 
\cite[\S 4.4]{Ti79} which classifies every every central isogeny class of absolutely simple
groups over $F$. 
The quaternionic orthogonal groups 
${}^2 D''_{2s}$ and ${}^4D_{2n}$ become ordinary orthogonal groups after passage to any ramified quadratic extension of $F$ since each such extension splits the
quaternion algebra over $F$.  This leaves the cases of orthogonal groups of quadratic forms including ${}^2B_s$ and ${}^2D'_t$.

By passing to any ramified quadratic extension $F'/F$, we may assume that the form $Q$ defining $\bG$ is $u_1 x_1^2+\cdots+u_n x_n^2$, where the
$u_i$ are units in $O_{F'}$.  We claim that 
there exists a plane hyperbolic with respect to $Q$ contained in the $3$-dimensional 
locus $x_4=x_5=\cdots=x_n=0$.  Indeed, the quadratic form $u_1 x_1^2+u_2 x_2^2 + u_3 x_3^2$ defines a form of $\SO(3)$; 
the space $(F')^3$ of triples $(x_1,x_2,x_3)$ contains a hyperbolic plane if and only if this form is split, i.e., if and only if there is a non-zero isotropic vector.
Non-trivial solution of the equation $\bar u_1 x_1^2 + \bar u_2 x_2^2 + \bar u_3 x_3^2 =0$ 
over the residue field $\F_{q'}$ of $F'$ 
exists by Chevalley-Warning, which lifts  by Hensel's lemma
to a non-zero isotropic vector in $(F')^3$.  
It follows by induction that $Q$ defines a quadratic form of Witt index $n' \ge n/2-1$. 
Hence, $\bG_{F'}$ can only be $D_{n'},B_{n'}, {}^2D_{n'+1}$ by \cite[\S 4.4]{Ti79} and (A)
and is thus unramified as its relative local Dynkin diagram has a hyperspecial vertex \cite[\S\S 4.2--4.3]{Ti79}.

Since a finite totally ramified extension $F'$ of $F$ contains subextensions
of all possible degrees dividing $[F':F]$,
we conclude that if $F^{\mathrm{t}}$ is a degree $12$ totally ramified extension of $F$,
then $\bG$ is unramified over $F^{\mathrm{t}}$ and (iii) is obtained.

For assertion (iv), let $F^{\mathrm{t}}$ be a field in (iii) and
$x^{\mathrm{t}}$ the centroid of a facet of $\mathscr{B}(\bG_{F^{\mathrm{t}}},F^{\mathrm{t}})$
whose stabilizer is  a maximal compact subgroup of $\bG(F^{\mathrm{t}})$ containing $\Pi$.
Since $\bG_{F^{\mathrm{t}}}$ is semisimple and unramified by (iii), there exists
a totally ramified extension $F'$ of $F^{\mathrm{t}}$ 
such that $x':=\iota_{F',F^{\mathrm{t}}}(x^{\mathrm{t}})$ 
is a hyperspecial point of $\mathscr{B}(\bG_{F'},F')$ \cite[Lemma 2.4]{La95}.
The stabilizer $\bG(F')^{x'}$ is the desired group $\Omega$.
\end{proof}



\subsection{Commutants and semisimplicity}

Let $F/\Q_\ell$ be a finite field extension, 
$V$ an $n$-dimensional $F$-vector space, 
$\Lattice$ an $O_F$-lattice in $V$,
and $\Gamma$ a closed subgroup of $\GL(\Lattice) \cong \GL_n(O_F)\subset  \GL_n(F)\cong\GL(V)$.  
If $F'$ is a finite extension of $F$, we can regard $\Gamma$ also
as a subgroup of $\GL(\Lattice') \cong \GL_n(O_{F'})$, where $\Lattice' = \Lattice\otimes_{O_{F}} O_{F'}$.  
Let $\pi$ (resp. $\pi'$) be a uniformizer of $O_F$ (resp. $O_{F'}$) and define $V' := V\otimes_F F'$. 
We have the following results in this setting.

\begin{lem}\label{ext}
The group $\Gamma$ acts semisimply
on $V'$ if and only if it acts semisimply on $V$, in which case we have
$$\dim_{F'} (\End_\Gamma V') = \dim_F (\End_\Gamma V).$$
Likewise, $\Gamma$ acts semisimply on the reduction $\ReducedLattice := \Lattice/\pi \Lattice$ if and only if it acts semisimply on 
$\ReducedLattice' := \Lattice'/\pi'\Lattice'$, in which case we have
$$\dim_{O_F/(\pi)} (\End_\Gamma \ReducedLattice) = \dim_{O_{F'}/(\pi')} (\End_\Gamma \ReducedLattice').$$
\end{lem}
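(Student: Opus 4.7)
The plan is to reduce both halves of the lemma to the standard fact that, for a finite-dimensional algebra over a field, both semisimplicity and the commutant of a faithful module base-change well along a separable field extension.

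For the first assertion, I will let $A \subset \End_F V$ denote the $F$-linear span of (the image of) $\Gamma$ inside $M_n(F)$, a finite-dimensional $F$-algebra acting faithfully on $V$ by construction. Then $V$ is semisimple as a $\Gamma$-module if and only if it is semisimple as an $A$-module, if and only if $A$ is a semisimple $F$-algebra. The inclusions $\Gamma \subset \GL_n(O_F) \subset \GL_n(O_{F'})$, combined with $M_n(F') = M_n(F) \otimes_F F'$, identify the analogous $F'$-algebra attached to the action on $V'$ with $A' := A \otimes_F F'$. Since $F'/F$ is separable (characteristic zero), $A$ is semisimple if and only if $A'$ is, yielding the semisimplicity equivalence. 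For the dimension identity I use $\End_\Gamma V = \End_A V$ together with the canonical isomorphism
\[
\End_A V \otimes_F F' \;\cong\; \End_{A'}(V \otimes_F F') \;=\; \End_\Gamma V',
\]
which gives the equality of $F'$-dimensions.

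For the second assertion, set $k = O_F/(\pi)$ and $k' = O_{F'}/(\pi')$. The key point is that $\pi$ lies in $\pi' O_{F'}$, so $O_{F'}/\pi' O_{F'}$ is annihilated by $\pi$ and is hence naturally a $k$-algebra, namely $k'$; consequently
\[
\ReducedLattice' \;=\; \Lattice \otimes_{O_F} (O_{F'}/\pi' O_{F'}) \;=\; \ReducedLattice \otimes_k k'.
\]
Since $k'/k$ is a separable extension of finite fields, the same enveloping-algebra argument then applies verbatim to the $k$-subalgebra $B \subset \End_k \ReducedLattice$ generated by the image of $\Gamma$ and its base change $B \otimes_k k'$, giving both the semisimplicity equivalence and the equality of endomorphism dimensions.

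I expect no substantive obstacle: the entire argument is a careful packaging of the separable base change principle for finite-dimensional algebras. The only point requiring any care is the identification $\ReducedLattice' \cong \ReducedLattice \otimes_k k'$, and once that is in hand both parts of the lemma are formal.
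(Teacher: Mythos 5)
Your argument is correct, and it fills in what the paper dismisses with ``This is clear.'' Both halves reduce to the standard facts that for a finite-dimensional algebra over a field, semisimplicity is preserved under base change by a separable extension (one direction via the Jacobson radical tensoring up to a nonzero nilpotent ideal, the other via separability), and that $\End$ commutes with flat base change for finitely presented modules. The one non-formal point you flag — the identification $\ReducedLattice' \cong \ReducedLattice \otimes_k k'$ — is handled correctly: since $\pi$ has positive valuation in $O_{F'}$, it lies in $\pi'O_{F'}$, so the $O_F$-action on $k'$ factors through $k$, and the two-step tensor collapse is legitimate. Nothing is missing; separability is automatic both in characteristic zero and for finite fields, so the hypothesis is always met where you invoke it.
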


\begin{proof}
This is clear.
\end{proof}



\begin{lemma}
\label{coh}
Let $M$ be a free $O_F$-module of
finite rank and $\Gamma$  a subgroup of $\Aut_{O_F}M$.
Then for all $k\ge 1$, the inclusion 
$$M^\Gamma/\pi^k M^\Gamma\subset
(M/\pi^kM)^\Gamma$$
 is either proper for all $k\ge 1$ or is an equality for all $k\ge 1$.

\end{lemma}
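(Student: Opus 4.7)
The plan is to first verify the inclusion and then establish the dichotomy via the connecting homomorphism in group cohomology. For the inclusion, I would observe that the composition $M^\Gamma \hookrightarrow M \twoheadrightarrow M/\pi^k M$ lands inside $(M/\pi^k M)^\Gamma$, and compute its kernel: if $m = \pi^k n \in M^\Gamma$ with $n\in M$, then for each $\gamma\in\Gamma$ we have $\pi^k(\gamma n - n) = \gamma m - m = 0$, so $\gamma n = n$ by torsion-freeness of $M$, whence $m \in \pi^k M^\Gamma$. This gives a well-defined injection $M^\Gamma/\pi^k M^\Gamma \hookrightarrow (M/\pi^k M)^\Gamma$.

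For the dichotomy, the plan is to apply the long exact sequence of (abstract) group cohomology to the short exact sequence of $\Gamma$-modules
\[
0 \longrightarrow M \xrightarrow{\ \pi^k\ } M \longrightarrow M/\pi^k M \longrightarrow 0.
\]
The initial segment
\[
M^\Gamma \xrightarrow{\ \pi^k\ } M^\Gamma \longrightarrow (M/\pi^k M)^\Gamma \xrightarrow{\ \delta\ } H^1(\Gamma,M) \xrightarrow{\ \pi^k\ } H^1(\Gamma,M)
\]
identifies the cokernel of the inclusion $M^\Gamma/\pi^k M^\Gamma \hookrightarrow (M/\pi^k M)^\Gamma$ with the $\pi^k$-torsion subgroup $H^1(\Gamma,M)[\pi^k]$. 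Hence the inclusion is an equality precisely when $H^1(\Gamma,M)[\pi^k] = 0$. (If one prefers to avoid invoking cohomology by name, the same map can be built concretely: a lift $m\in M$ of $\bar m\in (M/\pi^k M)^\Gamma$ produces a $1$-cocycle $\gamma\mapsto (\gamma m - m)/\pi^k$ whose class is independent of the lift and is annihilated by $\pi^k$.)

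The final step is the purely group-theoretic observation that, for any abelian group $A$, vanishing of $A[\pi^k]$ for some $k\ge 1$ is equivalent to vanishing of $A[\pi^k]$ for all $k\ge 1$: the containment $A[\pi]\subseteq A[\pi^{k_0}]$ shows that $A[\pi^{k_0}]=0$ forces $A[\pi]=0$, which in turn means $\pi$ acts injectively on $A$, so $\pi^k$ does as well for every $k\ge 1$, giving $A[\pi^k]=0$. Applying this with $A = H^1(\Gamma,M)$ immediately yields the stated dichotomy.

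There is no serious obstacle: the argument is essentially the long exact sequence plus a one-line observation about $\pi$-torsion. The only point requiring a brief check is that the kernel computation in the first step really uses torsion-freeness of $M$ as an $O_F$-module, which is automatic here since $M$ is assumed free of finite rank.
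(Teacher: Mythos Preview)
Your proof is correct and uses essentially the same engine as the paper: the long exact cohomology sequence attached to $0\to M\xrightarrow{\pi^k}M\to M/\pi^kM\to 0$, identifying the obstruction to equality with the image of the connecting map into $H^1(\Gamma,M)$. The one genuine difference is in how the dichotomy is established. You observe that the cokernel of the inclusion is exactly $H^1(\Gamma,M)[\pi^k]$ and then note that for any abelian group $A$, $A[\pi^{k_0}]=0$ for one $k_0$ forces $A[\pi]=0$, hence $A[\pi^k]=0$ for all $k$. The paper instead handles the two directions separately: a diagram chase (comparing the connecting maps for $k=1$ and general $k$ via the identity on $H^1$) gives ``proper at $k=1$ implies proper for all $k$,'' while the converse is done by an induction on $k$ using the short exact sequence $0\to M/\pi M\to M/\pi^kM\to M/\pi^{k-1}M\to 0$ and a cardinality count. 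Your torsion argument is cleaner and avoids both the auxiliary diagram and the finiteness-based induction; the paper's route, on the other hand, makes the comparison between different $k$ visible at the level of maps rather than torsion subgroups.
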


\begin{proof}
We use the following diagram of cohomology sequences:
$$\xymatrix{0\ar[r]&M^\Gamma\ar[r]^\pi\ar[d]^1&M^\Gamma\ar[r]\ar[d]^{\pi^{k-1}} &(M/\pi M)^\Gamma\ar[r]\ar[d]^{\pi^{k-1}}& H^1(\Gamma,M)\ar[d]^1 \\
0\ar[r]&M^\Gamma\ar[r]^{\pi^k}&M^\Gamma\ar[r] &(M/\pi^k M)^\Gamma\ar[r]&H^1(\Gamma,M)}.$$
The inclusion follows from the second row.  As the rightmost vertical arrow is an isomorphism, $M^\Gamma/\pi M^\Gamma\subsetneq(M/\pi M)^\Gamma$
implies $M^\Gamma/\pi^k M^\Gamma\subsetneq
(M/\pi^kM)^\Gamma$ for all $k\ge 1$.  Conversely, if $M^\Gamma/\pi M^\Gamma=(M/\pi M)^\Gamma$,
the cohomology sequence
$$0\to (M/\pi M)^\Gamma\to (M/\pi^kM)^\Gamma\to (M/\pi^{k-1} M)^\Gamma\to \cdots$$
implies by induction on $k$ that 
$$|(M/\pi^kM)^\Gamma| \le |(M/\pi M)^\Gamma|^k = |M^\Gamma/\pi M^\Gamma|^k = |M^\Gamma/\pi^k M^\Gamma|$$ 
for all $k\ge 1$, which implies
$(M/\pi^kM)^\Gamma = M^\Gamma/\pi^k M^\Gamma$.
\end{proof}

\begin{lem}\label{ss}
Let  $V$ be a finite-dimensional vector space over a field $F$ and $H\subset G\subset \GL(V)$ be subgroups. 
Let $V^{\ss}$ be the semisimplification of $G$ on $V$. 
The following assertions hold.
\begin{enumerate}[($i$)]
\item $\dim_F(\End_{G} V)\le \dim_F(\End_G V^{\ss})$.
\item If $\dim_F(\End_{G} V)=\dim_F(\End_G V^{\ss})$, then $G$ acts semisimply on $V$.
\item If $H$ acts semisimply on $V$ and $\dim_F(\End_{G} V)=\dim_F(\End_{H} V)$, then $G$ acts semisimply on $V$.
\item If $H$ acts semisimply on $V$ and $\dim_F(\End_{G} V)=\dim_F(\End_{H} V)$, then $H$ is absolutely irreducible on every
absolutely irreducible subrepresentation $W\leq V$ of $G$.
\end{enumerate}
\end{lem}

\begin{proof} 


%
%


Assertions (i) and (ii) are just \cite[Lemma 3.6.1.1]{CHT18}.

Let $H^{\red}$ and $G^{\red}$ be the images of  $H$ and $G$ respectively in $\GL(V^{\ss})$.
Since $H$ acts semisimply on $V$, the representations $H\to \GL(V)$ and $H^{\red}\to \GL(V^{\ss})$ are isomorphic. This implies
\begin{align*}
\dim_F(\End_{G} V)&=\dim_F(\End_{H} V)=\dim_F(\End_{H^{\red}}V^{\ss}) \\
& \geq\dim_F(\End_{G^{\red}}V^{\ss}).
\end{align*}
Then (iii) follows from (i) and (ii).

For assertion (iv), $G$ is semisimple on $V$ by (iii).
The absolute irreducibility of $W$ and the condition $\dim_F(\End_{G} V)=\dim_F(\End_{H} V)$ 
force $1=\dim_F(\End_{G} W)=\dim_F(\End_{H} W)$. We are done since $H$ is semisimple on $W$.
\end{proof}

\begin{prop}
\label{reduce-eq}
Let $F$ be a characteristic $0$ local field with valuation ring $O_F$
and residue field $\F_q$.
Let $V$ be a finite-dimensional vector space over $F$ and
$\Gamma$  a compact subgroup of $\GL(V)$ which acts semisimply on $V$.
The following assertions are equivalent.
\begin{enumerate}[(i)]
\item For some $\Gamma$-stable lattice $\Lambda$ of $V$, we have
$$\dim_{F} (\End_\Gamma V) = \dim_{\F_q} (\End_\Gamma (\Lambda\otimes_{O_F}\F_q)^{\ss}).$$

\item For every $\Gamma$-subrepresentation $W$ of $V$ and every $\Gamma$-stable lattice 
$\Lambda_W$ of $W$, $\Lambda_W\otimes_{O_F}\F_q$ is semisimple, and
$$\dim_{F} (\End_\Gamma W) = \dim_{\F_q} (\End_\Gamma (\Lambda_W\otimes_{O_F}\F_q)).$$
\item The following two assertions hold.
\begin{itemize}
\item[(a)] If $W$ is an irreducible $\Gamma$-subrepresentation of $V$, and $\Lambda_W$ a  $\Gamma$-stable lattice  of $W$, then 
$\Lambda_W\otimes_{O_F}\F_q$ is semisimple and
$$\dim_{F} (\End_\Gamma W )= \dim_{\F_q} (\End_{\Gamma}(\Lambda_W\otimes_{O_F}\F_q)).$$
\item[(b)] If $W_1$ and $W_2$ are non-isomorphic irreducible $\Gamma$-subrepresentations of $V$ and
$\Lambda_1$ and $\Lambda_2$ are $\Gamma$-stable  lattices  of $W_1$ and $W_2$ respectively, then
$\Lambda_1\otimes_{O_F}\F_q$ and $\Lambda_2\otimes_{O_F}\F_q$ have no common irreducible $\Gamma$-subrepresentation.
\end{itemize}
\end{enumerate}
\end{prop}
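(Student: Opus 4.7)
The plan is to establish the cycle (ii) $\Rightarrow$ (i), (i) $\Rightarrow$ (ii), (ii) $\Rightarrow$ (iii), (iii) $\Rightarrow$ (i). The implication (ii) $\Rightarrow$ (i) is immediate on taking $W = V$ and $\Lambda_W = \Lambda$. The core of the remaining arguments is a lifting inequality: for any $\Gamma$-stable lattices $\Lambda_1 \subseteq W_1$, $\Lambda_2 \subseteq W_2$ in $\Gamma$-stable subspaces of $V$, the $O_F$-module $N := \Hom_{O_F}(\Lambda_1, \Lambda_2)$ carries the adjoint $\Gamma$-action, and $N/N^\Gamma$ is torsion-free (since $\pi f \in N^\Gamma$ already forces $f \in N^\Gamma$), so $N^\Gamma$ is a free direct summand of $N$. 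Hence $\rank_{O_F} N^\Gamma = \dim_{\F_q} N^\Gamma/\pi N^\Gamma$, and this rank equals $\dim_F \Hom_\Gamma(W_1, W_2)$ by flat base change to $F$; combining with the injection $N^\Gamma/\pi N^\Gamma \hookrightarrow (N/\pi N)^\Gamma = \Hom_\Gamma(\Lambda_1/\pi\Lambda_1, \Lambda_2/\pi\Lambda_2)$ from Lemma~\ref{coh} yields
\[
\dim_F \Hom_\Gamma(W_1, W_2) \;\le\; \dim_{\F_q} \Hom_\Gamma(\Lambda_1/\pi\Lambda_1, \Lambda_2/\pi\Lambda_2).
\]
Specializing to $W_1 = W_2 = W$, $\Lambda_1 = \Lambda_2 = \Lambda$ and chaining with Lemma~\ref{ss}(i) gives
\[
\dim_F \End_\Gamma W \;\le\; \dim_{\F_q} \End_\Gamma(\Lambda/\pi\Lambda) \;\le\; \dim_{\F_q} \End_\Gamma(\Lambda/\pi\Lambda)^{\ss}.
\]
Since $(\Lambda/\pi\Lambda)^{\ss}$ depends only on $W$ by Brauer--Nesbitt, the existence of a single good $\Lambda_0 \subseteq V$ forces this chain to collapse to equalities for every $\Gamma$-stable lattice $\Lambda \subseteq V$, and Lemma~\ref{ss}(ii) then makes every such $\Lambda/\pi\Lambda$ semisimple. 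In other words, condition (i) is lattice-independent on $V$.

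For (i) $\Rightarrow$ (ii): given $W \subseteq V$ and a $\Gamma$-stable lattice $\Lambda_W \subseteq W$, semisimplicity of $V$ furnishes a $\Gamma$-stable complement $V'$ and a $\Gamma$-stable lattice $\Lambda' \subseteq V'$; set $\Lambda := \Lambda_W \oplus \Lambda'$. By lattice-independence, $\Lambda/\pi\Lambda$ is semisimple, and so its direct summand $\Lambda_W/\pi\Lambda_W$ is semisimple. Expanding both $\End_\Gamma V$ and $\End_\Gamma(\Lambda/\pi\Lambda)$ into the four blocks coming from the decompositions $V = W \oplus V'$ and $\Lambda/\pi\Lambda = \Lambda_W/\pi\Lambda_W \oplus \Lambda'/\pi\Lambda'$, the lifting inequality applies in each block; the total equality forces equality block-by-block, and the $(W,W)$ block gives the desired commutant identity $\dim_F \End_\Gamma W = \dim_{\F_q} \End_\Gamma(\Lambda_W/\pi\Lambda_W)$.

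For (ii) $\Rightarrow$ (iii), part (a) is immediate. For (b), apply (ii) to $W := W_1 \oplus W_2$ with lattice $\Lambda_1 \oplus \Lambda_2$: Schur forces $\End_\Gamma W = \End_\Gamma W_1 \oplus \End_\Gamma W_2$ whereas $\End_\Gamma(\Lambda_W/\pi\Lambda_W)$ picks up the extra $\Hom$-cross-blocks, and comparing dimensions with (ii) applied to $W_1, W_2$ separately forces $\Hom_\Gamma(\Lambda_1/\pi\Lambda_1, \Lambda_2/\pi\Lambda_2) = 0$; since both reductions are semisimple, this is exactly (iii)(b). For (iii) $\Rightarrow$ (i), decompose $V = \bigoplus_i m_i U_i$ into isotypic components, choose any $\Gamma$-stable $\Lambda_i \subseteq U_i$, and set $\Lambda := \bigoplus_i \Lambda_i^{\oplus m_i}$: (iii)(a) gives semisimplicity and commutant equality within each component, (iii)(b) kills the cross blocks in $\End_\Gamma(\Lambda/\pi\Lambda)$, and the within-component sum $\sum_i m_i^2 \dim_F \End_\Gamma U_i = \dim_F \End_\Gamma V$ recovers (i). The main subtlety I anticipate is the symmetric block accounting in (i) $\Rightarrow$ (ii), which depends on $\dim_F \Hom_\Gamma(W, V') = \dim_F \Hom_\Gamma(V', W)$ and its $\F_q$-analogue over semisimple modules, so that both sides of the lifting inequality split into matching four-block sums.
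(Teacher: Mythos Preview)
Your proof is correct and follows essentially the same route as the paper's: the lifting inequality via Lemma~\ref{coh}, the block decomposition for (i)$\Rightarrow$(ii), the specialization to $W=W_1\oplus W_2$ for (iii)(b), and the isotypic reconstruction for (iii)$\Rightarrow$(i) are all exactly what the paper does. Your intermediate step of phrasing (i) as lattice-independent on $V$ is a clean way to organize the Brauer--Nesbitt/Lemma~\ref{ss} input that the paper uses more implicitly.

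One remark: your final worry about needing the symmetry $\dim_F\Hom_\Gamma(W,V')=\dim_F\Hom_\Gamma(V',W)$ is unnecessary. The four blocks in $\End_\Gamma V$ and in $\End_\Gamma(\Lambda/\pi\Lambda)$ each satisfy the lifting inequality \emph{individually}, so equality of the totals forces equality in every block separately; you never need to pair the off-diagonal blocks or compare their sizes. The argument goes through without any appeal to duality or semisimplicity-based $\Hom$-symmetry.
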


\begin{proof}
Assume assertion (i), and let $W$ and $W'$ be any subrepresentations of $V$ and $\Lambda$ and $\Lambda'$ stable lattices in $W$ and $W'$ respectively.  Applying 
and Lemma~\ref{coh} to $M = \Hom_{O_F}(\Lambda,\Lambda')\subset \Hom_\Gamma(W,W')$, we obtain

\begin{equation}
\label{ww}
\begin{split}
\dim_{F} (\Hom_\Gamma (W,W')) &= \rk_{O_F} (\Hom_\Gamma(\Lambda,\Lambda')) \\ &\le 
\dim_{\F_q} (\Hom_\Gamma (\Lambda\otimes_{O_F}\F_q,\Lambda'\otimes_{O_F}\F_q)).
\end{split}
\end{equation}
Let $W_1$ and $W_2$ be respectively complementary $\Gamma$-subrepresentations of $V$ 
with $\Gamma$-stable lattices $\Lambda_1$ and $\Lambda_2$.
The Brauer-Nesbitt theorem implies that $(\Lambda\otimes_{O_F}\F_q)^{\ss}$ is the semisimplification
of $(\Lambda_1\oplus\Lambda_2)\otimes_{O_F}\F_q$. It follows by (i) that 
\begin{align*}
\dim_{\F_q} (\End_\Gamma (\Lambda\otimes_{O_F}\F_q)^{\ss}) &\ge 
\dim_{\F_q} (\End_\Gamma \bigl((\Lambda_1\oplus\Lambda_2)\otimes_{O_F}\F_q\bigr)) \\
										&= \sum_{i=1}^2 \sum_{j=1}^2 \dim_{\F_q} (\Hom_\Gamma(\Lambda_i\otimes_{O_F}\F_q,\Lambda_j\otimes_{O_F}\F_q)) \\
										&\ge \sum_{i=1}^2 \sum_{j=1}^2\dim_{F} (\Hom_\Gamma(\Lambda_i\otimes_{O_F}F,\Lambda_j\otimes_{O_F}F) )\\
										&= \dim_{F} (\End_\Gamma V),
\end{align*}
where equality holds only if $(\Lambda_1\oplus\Lambda_2)\otimes_{O_F}\F_q=(\Lambda_1\otimes_{O_F}\F_q)\oplus(\Lambda_2\otimes_{O_F}\F_q)$ is semisimple (by Lemma~\ref{ss}(ii)) 
and equality holds in (\ref{ww}) for $W=W'=W_1$ and $W=W'=W_2$.
This implies (ii).

Assertion (ii) implies (iii-a) trivially and (iii-b) by setting $W= W_1+W_2$.

Given assertion (iii), if $W_1^{a_1}\oplus\cdots\oplus W_k^{a_k}$ is a decomposition of $V$ into pairwise non-isomorphic $\Gamma$-representations, then choosing for each summand $W_i^{a_i}$
a $\Gamma$-stable lattice of the form $\Lambda_i^{a_i}$ and setting $\Lambda = \sum_i \Lambda_i^{a_i}$, we see that $\Lambda\otimes_{O_F}\F_q$ is a direct sum of isotypic semisimple representations
$(\Lambda_i\otimes_{O_F}\F_q)^{a_i}$, where the representations $\Lambda_i\otimes_{O_F}\F_q$ are pairwise without common irreducible factor.
Thus, $\Lambda\otimes_{O_F}\F_q$ is semisimple, and 
\begin{align*}\dim_{F} (\End_\Gamma V) &= \sum_{i=1}^k a_i^2 \dim_{F}(\End_\Gamma W_i )\\
									&= \sum_{i=1}^k a_i^2 \dim_{\F_q}(\End_\Gamma (\Lambda_i \otimes_{O_F}\F_q)) =
\dim_{\F_q} (\End_\Gamma (\Lambda\otimes_{O_F}\F_q)).
\end{align*}
\end{proof}

\begin{cor}
\label{abs-irred-red}
Let $V$ be a finite-dimensional vector space over $\Q_\ell$ and
$\Gamma$  a compact subgroup of $\GL(V)$ which acts semisimply on $V$.
The following assertions are equivalent.
\begin{enumerate}[(i)]
\item For some $\Gamma$-stable lattice $\Lambda$ of $V$, we have
$$\dim_{\Q_\ell} (\End_\Gamma V) = \dim_{\F_\ell} (\End_\Gamma (\Lambda\otimes\F_\ell)^{\ss}).$$
\item If $F$ is a finite extension of $\Q_\ell$ with residue field $\F_q$ such that every irreducible
$\Gamma$-subrepresentation of $V\otimes F$ is absolutely irreducible, then
the following two assertions hold.
\begin{itemize}
\item[(a)] If $W$ is an irreducible $\Gamma$-subrepresentation of $V\otimes F$ 
and $\Lambda_W$ a  $\Gamma$-stable $O_F$-lattice  of $W$, then 
$\Lambda_W\otimes_{O_F}\F_q$ is absolutely irreducible.

\item[(b)] If $W_1$ and $W_2$ are non-isomorphic irreducible $\Gamma$-subrepresentations of $V\otimes F$ and
$\Lambda_1$ and $\Lambda_2$ are $\Gamma$-stable  $O_F$-lattices  of $W_1$ and $W_2$ respectively, then
$\Lambda_1\otimes_{O_F}\F_q$ and $\Lambda_2\otimes\F_q$ are not isomorphic.
\end{itemize}
\end{enumerate}
\end{cor}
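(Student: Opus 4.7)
The plan is to deduce this corollary from Proposition~\ref{reduce-eq} by base-changing $V$ to a finite extension $F/\Q_\ell$ that simultaneously splits all irreducible $\Gamma$-subrepresentations of $V\otimes F$. Such an $F$ exists: since $\Gamma$ is compact in $\GL(V)$ and acts semisimply, the action on $V$ factors, up to pro-$\ell$ error on the residual side, through a profinite quotient whose irreducible $\bar\Q_\ell$-constituents are finite in number and defined over a single finite extension of $\Q_\ell$. We take $F$ to be any such extension, with ring of integers $O_F$, uniformizer $\pi$, and residue field $\F_q$.

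First I would apply Lemma~\ref{ext} to $V$ and the lattice $\Lambda$: the dimensions of the commutants and the semisimplicity of $\Lambda\otimes\F_\ell$ and of $(\Lambda\otimes_{\Z_\ell}O_F)/\pi$ match those computed over $F$ and $\F_q$ respectively. Thus condition~(i) for $\Gamma$ acting on $V$ over $\Q_\ell$ is equivalent to condition~(i) of Proposition~\ref{reduce-eq} for $\Gamma$ acting on $V\otimes_{\Q_\ell}F$, using the lattice $\Lambda\otimes_{\Z_\ell}O_F$. Applying the equivalence (i)$\Leftrightarrow$(iii) of Proposition~\ref{reduce-eq} over $F$ then rewrites condition~(i) as the combination of: (a$'$) for every irreducible $\Gamma$-subrepresentation $W\subset V\otimes F$ and $\Gamma$-stable $O_F$-lattice $\Lambda_W$, the reduction $\Lambda_W\otimes_{O_F}\F_q$ is semisimple with $\dim_{\F_q}\End_\Gamma(\Lambda_W\otimes_{O_F}\F_q)=\dim_F\End_\Gamma W$; and (b$'$) the reductions of lattices in non-isomorphic irreducible subrepresentations share no common irreducible constituent.

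The essential translation uses the splitting hypothesis on $F$: by Schur's lemma every irreducible $W\subset V\otimes F$ satisfies $\End_\Gamma W = F$, so (a$'$) becomes the statement that $\Lambda_W\otimes_{O_F}\F_q$ is semisimple with $\F_q$-scalar commutant. A semisimple $\F_q[\Gamma]$-module with scalar commutant is simple, and a simple $\F_q[\Gamma]$-module with commutant $\F_q$ is precisely an absolutely irreducible one; conversely, an absolutely irreducible module has commutant $\F_q$. This gives the equivalence of (a$'$) with (ii)(a). Once absolute irreducibility is in hand, each $\Lambda_i\otimes_{O_F}\F_q$ is simple, so (b$'$) (no common irreducible constituent) degenerates to non-isomorphism, which is (ii)(b).

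The main point to justify carefully is that the resulting conditions in (ii) do not depend on the choices of the splitting field $F$, of the $\Gamma$-stable $O_F$-lattice $\Lambda_W$, or of the representative lattice $\Lambda$ in (i). Independence of the lattice follows from the Brauer--Nesbitt theorem together with Lemma~\ref{coh} applied to the Hom-lattices as in the proof of Proposition~\ref{reduce-eq}; independence of $F$ follows from the fact that both sides of the equivalence are invariant under further unramified (and indeed arbitrary) extension by Lemma~\ref{ext}. No further ingredients are needed beyond those already established in this subsection.
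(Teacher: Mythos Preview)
Your proof is correct and follows the same approach as the paper: use Lemma~\ref{ext} to transport condition~(i) to the extension $F$, invoke the equivalence (i)$\Leftrightarrow$(iii) of Proposition~\ref{reduce-eq} over $F$, and then observe that the absolute-irreducibility hypothesis on $F$ collapses conditions (iii-a) and (iii-b) of that proposition to conditions (ii)(a) and (ii)(b) of the corollary via Schur's lemma. The paper's proof is terser---it does not spell out the existence of a splitting field $F$, the Schur's-lemma translation, or the independence of choices---but the skeleton is identical; your additional paragraphs simply fill in details the paper leaves implicit.
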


\begin{proof}
Let $F$ be the finite extension of $\Q_\ell$ in assertion (ii).
Tensoring by $O_F$ over $\Z_\ell$, we see by Lemma \ref{ext} that assertion (i) is equivalent to assertion (i) of Proposition~\ref{reduce-eq}.
Regarding $\Gamma$ as a subgroup of $\Aut_F(V\otimes F)$, by absolute irreducibility, assertions (iii-a) and (iii-b)
of Proposition~\ref{reduce-eq} correspond to assertions (ii-a) and (ii-b) respectively.
\end{proof}

\begin{lem}
\label{ss-kernel}
Let $F$ be a finite totally ramified extension of $\Q_\ell$ with ring of integers $O_F$ and residue field $\F_\ell$,
$\Lambda$ a finitely generated free $O_F$-module,
and $\Gamma\subset \Aut_{O_F} \Lambda$ a closed subgroup such that the
action of $\Gamma$ on $\Lambda\otimes\F_\ell$ is semisimple.  Then
$$\ker \bigl(\Gamma\to \Aut_{\F_\ell} (\Lambda\otimes\F_\ell)\bigr)$$
is the maximal normal pro-$\ell$ subgroup of $\Gamma$.
\end{lem}

\begin{proof}
The kernel is a closed subgroup of the pro-$\ell$ group 
$$\ker \bigl(\Aut_{O_F} \Lambda\to \Aut_{\F_\ell} (\Lambda\otimes\F_\ell)\bigr)$$ 
and
therefore is again pro-$\ell$.  So it suffices to prove it is maximal among normal pro-$\ell$ subgroup of $\Gamma$.  If not, the image
of any normal pro-$\ell$ subgroup not contained in the kernel is a non-trivial normal $\ell$-subgroup of the image of $\Gamma\to \Aut_{\F_\ell} (\Lambda\otimes\F_\ell)$.
However, a subgroup of $\GL_n(\F_\ell)$ which acts semisimply cannot have a non-trivial normal $\ell$-subgroup, since a semisimple representation of
an $\ell$-group over $\F_\ell$ is necessarily trivial.
\end{proof}

\subsection{Formal characters and regular elements}  \label{formal}
 
\subsubsection{}\label{261} We work over a field $F$ of any characteristic. 
Suppose at first that $F$ is algebraically closed.
Let $\bT\subset \GL_n$ be a torus of rank $r$.  By \emph{weights of $\bT$}, we mean the weights of the ambient representation $\bT\to \GL_n$,
i.e., the characters $\chi\in X^*(\bT)$ appearing in the decomposition of the ambient representation into irreducible factors.
We define $m_\chi$ to be the multiplicity of the weight $\chi$ and $\sum_\chi m_\chi[\chi] \in \Z[X^*(\bT)]$ to be the \emph{formal character
of $\bT$} (as a subgroup of $\GL_n$).  

For $N\in\N$, let $I_N$ denote the set of integers in the interval $[-N,N]$.  
Fixing an isomorphism $i\colon \Z^r\to X^*(\bT)$, the formal character 
is \emph{bounded by $N$ with respect to $i$} if $m_\chi > 0$ only for $\chi\in i(I_N^r)$.  We say it is \emph{bounded by $N$} if this is true for some choice of $i$ (see \cite[Definition 4]{Hu15}).
In this case we say that $\bT$ is an \emph{$N$-bounded} torus.  

For any connected algebraic subgroup $\bG$ of $\GL_{n,F}$, 
we define the \emph{formal character of $\bG$} as the formal character of any maximal torus $\bT\subset \bG$
and we say that $\bG$ is $N$-bounded if $\bT$ is $N$-bounded; since
the maximal tori of $\bG$ are conjugate to one another, this does not depend on the choice of $\bT$.
We say the formal characters of connected algebraic subgroups $\bG_1\subset \GL_{n,F_1}$ and $\bG_2\subset \GL_{n,F_2}$
(where $F_1$ and $F_2$ may even have different characteristics)
are \emph{the same} if there exist maximal tori $\bT_1$, $\bT_2$ (of $\bG_1$, $\bG_2$ respectively)
and an isomorphism $X^*(\bT_1)\to X^*(\bT_2)$ mapping the formal character of $\bT_1\subset\GL_{n,F_1}$ to that of $\bT_2\subset\GL_{n,F_2}$.
This is equivalent to the existence of $g_1\in\GL_n(F_1)$ and $g_2\in\GL_n(F_2)$ such that
$g_1^{-1}\bT_1 g_1\subset\G^n_{n,F_1}$ and  $g_2^{-1}\bT_2 g_2\subset\G^n_{n,F_2}$ are diagonal tori cut out 
by the same set of characters in $\G^n_m$ \cite[Proposition 2.0.1]{Hu15}.
If $F$ is not algebraically closed, the formal character of $\bG\subset\GL_{n,F}$ 
is defined to be the formal character of $\bG_{\overline F}\subset\GL_{n,\overline F}$
and we say the former is $N$-bounded if the latter is $N$-bounded.

\subsubsection{}\label{262}
Let $\bT\subset \G_m^n$ be a rank $r$ diagonal torus over a field $F$.
The corresponding map of character groups
$f\colon \Z^n = X^*(\G_m^n) \to X^*(\bT)$ determines an ordered $n$-tuple $(f(e_1),\ldots,f(e_n))\in X^*(\bT)^n$, where the $e_i$ are the standard generators of $\Z^n$.  The number of occurrences of a character $\chi\in X^*(\bT)$ in this $n$-tuple equals $m_\chi$, so  $\bT$ determines  the element of $X^*(\bT)^n$ up to permutation.
As $\ker f$ is finitely generated, there exists $M\in\N$ such that $I_M^n\cap \ker f$ generates $\ker f$.  
This is equivalent to the fact that $\bT$ is the intersection in $\G_m^n$ 
of $\ker \chi$ over some collection of $\chi\in I_M^n\subset \Z^n = X^*(\G_m^n)$.
There are finitely many homomorphisms $\Z^n\to \Z^r$ sending each $e_i$ to an element of
$I_N^r$, so there
exists a constant $\ConstMN(n,N)$
depending only on $n,N\in\N$ (independent of the field $F$) such that this property holds for all $N$-bounded subtori of $\G^n_m$
whenever $M\geq \ConstMN(n,N)$.

Let $\bT\subset\GL_{n,F}$ be a torus and $t\in\bT(F)$. There exists $g\in\GL_n(\overline F)$
such that $g^{-1}\bT g\subset\G^n_m$ is a diagonal torus. If $h\in\GL_n(\overline F)$ is another element such that 
$h^{-1}\bT h\subset\G^n_m$, then a permutation of coordinates of $\G^n_m$ maps $g^{-1}\bT g$ to $h^{-1}\bT h$.
We say that $t\in \bT(F)$ is \emph{$m$-regular} if whenever
$\chi\in I_m^n\subset X^*(\G_m^n)$ is a character such that $g^{-1}tg\in \ker\chi$, then $g^{-1}\bT g\subset \ker\chi$.  As $I_m^n$ is stable under permutation of coordinates, this does not depend on the choice of $g$ conjugating $\bT$ into a diagonal torus.

If $\bT$ is a maximal torus of a connected reductive subgroup $\bG\subset\GL_{n,F}$ and $t\in \bT(F)$
is $1$-regular, then $t$ is a regular semisimple element of $\bG$; this follows from the fact that the adjoint representation of $\bG$ is  
a subrepresentation of the restriction to $\bG$ of the adjoint representation of $\GL_{n,F}$.
If $t\in\bG(F)$ is regular semisimple, we say that $t$ is $m$-regular if $t$ is $m$-regular with respect to 
the unique maximal torus $\bT\subset\bG$ (defined over $F$) containing $t$.
The following lemmas are fundamental.

\begin{lem}
\label{MN}
Let $F_1,F_2$ be fields and $M\ge \ConstMN(n,N)$ an integer.
If $\bT_1\subset \G^n_{m,F_1}$ and $\bT_2\subset \G^n_{m,F_2}$ 
are $N$-bounded diagonal tori of same rank and
$t\in \bT_1(F_1)$ is an $M$-regular element such that for all $\chi\in I_M^n\cap X^*(\G^n_m)$,
the inclusion $\bT_2\subset \ker\chi$ implies $\chi(t)=1$, 
then $\bT_1$ and $\bT_2$ are cut out by the same set of characters in $I_M^n\cap X^*(\G_m^n)$.
\end{lem}

\begin{proof}
The proof is immediate.
\end{proof}

\begin{lemma}\label{reg-contain}
Let $F$ be a field and $M\ge \ConstMN(n,N)$ an integer.
If $\bT_1$ and $\bT_2$ are $N$-bounded tori of $\GL_{n,F}$ and $t\in \bT_1(F)\cap \bT_2(F)$
is $M$-regular for $\bT_1$, then  $\bT_1\subset \bT_2$.
\end{lemma}

\begin{proof}
Without loss of generality, 
we may assume that $F$ is algebraically closed, $\bT_2\subset \G_m^n$, and $g^{-1}\bT_1 g \subset \G_m^n$ for some $g\in\GL_n(F)$.  
As $g^{-1} t g$ and $t$ are both diagonal, $g$ can be written as $zh$, 
where $z$ commutes with $t$ and $h$ normalizes $\G_m^n$ in $\GL_n$.  Thus we may take $g=z$.
Since $t\in z^{-1}\bT_1 z \subset \G_m^n$ is at least $1$-regular, 
it follows that $\bT_1$ is also diagonal. Since $t$ is $M$-regular,
every character $\chi\in I_M^n\cap X^*(\G_m^n)$ which annihilates $\bT_2$ sends $t$ to $1$ 
and is therefore trivial on $\bT_1$. By the definition of $M$, we obtain  $\bT_1\subset \bT_2$.
\end{proof}

\subsubsection{}\label{263}
Let $A$ be an abelian group and $B$ a subgroup of $A$. The \emph{saturation of} $B$ 
is the subgroup of elements $a\in A$ such that $ma\in B$ for some non-zero $m\in\Z$. If $B$ is 
equal to its saturation, then $B$ is said to be \emph{saturated}.
We focus on algebraic groups over finite fields $\F_\ell$.

\begin{prop}
\label{exp-gen}
If $\ell$ and $N$ are sufficiently large in terms of $n$, every exponentially generated subgroup 
of $\GL_{n,\F_\ell}$ has $N$-bounded formal character.
\end{prop}

\begin{proof}
Let $\bT\subset\G_m^n$ be a torus of rank $r$ over an algebraically closed field $F$
with $f: X^*(\G_m^n)\to X^*(\bT)$.
For $\Sigma\subset \{1,\ldots,n\}$ of cardinality $r$, let $\bT_{\Sigma}\subset \G_m^n$ be the rank $n-r$ torus
with $1$ in the $\sigma$-coordinate for all $\sigma\in\Sigma$ with $f_\Sigma: X^*(\G_m^n)\to X^*(\bT_\Sigma)$. 
 The fiber product over $\G_m^n$ of two subtori is 
cut out by the sum of the subgroups 
of $X^*(\G_m^n)$ cutting out each of the tori.  The closed subscheme $\bC$ of $\G_m^n$ cut out by
a subgroup of $X^*(\G_m^n)$ of index $D$ is reduced and satisfies $|\bC(F)|=D$ 
if and only if $D$ is not divisible by the characteristic of $F$.
If $\bT\times_{\G_m^n} \bT_{\Sigma}$ is reduced, then $|\bT(F)\cap \bT_{\Sigma}(F)|$ and the index
$$[\Lambda^n X^*(\G_m^n) : \Lambda^r (\ker f)\wedge \Lambda^{n-r} (\ker f_\Sigma)]$$
have the same cardinality.  
Thus, if $B$ is fixed, $\bT\times_{\G_m^n} \bT_{\Sigma}$ is reduced for all $\Sigma$,
and $|\bT(F)\cap \bT_{\Sigma}(F)|\le B$ for all $\Sigma$ for which the intersection is finite, then
there are only a finite number of possibilities for the top exterior power $\Lambda^r (\ker f)\subset \Lambda^r X^*(\G_m^n)$ and therefore a finite number of possibilities for $(\ker f)\otimes\Q$ as a subspace of $X^*(\G_m^n)\otimes \Q = \Q^n$.  As $\ker f$ is saturated, 
$(\ker f)\otimes\Q$ determines $\ker f$ as a subgroup of $X^*(\G_m^n)$ and therefore determines the formal character of $\bT$.

By \cite[Proposition~3]{La10} and the fact that Hilbert schemes are of finite type, 
 the exponentially generated subgroups of  $\GL_{n,\F_\ell}$ for all sufficiently large $\ell$ form a constructible family 
$\cG\subset \GL_{n,\cS}$ in the sense of \cite{LP}; i.e., 
$\cG$ is a closed subscheme of the general linear scheme over a scheme $\cS$ of finite type over $\Spec \Z$ 
and for every algebraically closed field $F$ of characteristic $0$ or sufficiently large positive characteristic, 
every exponentially generated subgroup
of $\GL_{n,F}$ is of the form $\cG_x$ for some $x\in \cS(F)$.

Let $\cT$ denote the closed subscheme of $\GL_{n,\cS}$ consisting of diagonal matrices, and 
for any subset $\Sigma\subset \{1,2,\ldots,n\}$, let $\cT_\Sigma$ denote the closed subscheme of $\cT$ for which the $\sigma$-coordinate is $1$ for all $\sigma\in \Sigma$.
The fiber product $\cG\times_{\GL_{n,\cS}} \cT_\Sigma$
is a group scheme over $\cS$, therefore reduced over every point in characteristic $0$ and therefore
reduced over every point in sufficiently large finite characteristic \cite[Th\'eor\`eme~9.7.7(iii)]{EGAIV}.
Moreover, by \cite[Corollaire~9.7.9]{EGAIV}, there is an upper bound for the cardinality of any finite fiber, and this implies there are only finitely many possibilities for the formal character of any fiber of 
$\cG$.
\end{proof}

\begin{prop}
\label{old3-2}
There exists a constant $\ConstSubtorus(r,k,N)$ depending only on $r,k,N\in\N$ such that if 
$\uT$ is a rank $r$ torus over $\F_\ell$ with $\ell > \ConstSubtorus(r,k,N)$ and 
the $\Gal_{\F_\ell}$-orbit of $\chi \in X^*(\uT_{\overline\F_\ell})$ is $N$-bounded with respect to 
some isomorphism $i\colon \Z^r\to X^*(\uT_{\overline\F_\ell})$, then
\begin{equation}
\label{subtorus-ineq}
|\{t\in \uT(\F_\ell)\mid \chi(t)=1\}| < k^{-1}|\uT(\F_\ell)|.
\end{equation}
\end{prop}

\begin{proof}
Let $X$ be the subgroup of $X^*(\uT_{\overline\F_\ell})$ generated by the Galois-orbit $O_\chi$ of $\chi$.
Then the number of possibilities for $i^{-1}(X)$, 
is bounded by a constant depending only on $N$ and $r$.
Therefore, there exists $s\in\N$, depending only on $N$ and $r$,
such that for all $\xi$ in the saturation of $X$, we have $s\xi\in X$.
It follows that if $t\in \cap_{\xi\in X} \ker\xi$, then $t^s$ belongs to the subtorus $\bT_{O_\chi}$ of $\bT_{\overline\F_\ell}$ cut out by the saturation of $X$.  Since any element of $\uT(\F_\ell)$ in 
$\ker\chi$ is in $\ker\chi^\sigma$ for all $\chi^\sigma\in O_\chi$, it follows that
$\{t\in \uT(\F_\ell)\mid \chi(t)=1\}$ lies in the union of at most $s^r$ translates of a
proper $\F_\ell$-subtorus of $\uT$.  A proper subtorus has at most $(\ell+1)^{r-1}$ $\F_\ell$-points, while $|\uT(\F_\ell)| \ge (\ell-1)^r$, and the proposition follows.
\end{proof}

\begin{cor}
\label{M-regular}
There exists a constant $\ConstReg(\epsilon,m,n,N)$ 
depending only on $\epsilon > 0$ and $m,n,N\in\N$
such that if $\uT\subset \GL_{n,\F_\ell}$ is an $N$-bounded torus with $\ell > \ConstReg(\epsilon,m,n,N)$, 
then the number of elements of $\uT(\F_\ell)$ which fail to be $m$-regular is less than $\epsilon |\uT(\F_\ell)|$.
\end{cor}

\begin{proof}
Taking 
$$\ConstReg(\epsilon,m,n,N) = \max_{0\le r\le n}\ConstSubtorus(r,\lceil |I_m^n|/\epsilon \rceil,mN),$$
this follows immediately.
\end{proof}

\begin{prop}
\label{big-inter}
There exists a constant $\ConstInter(\epsilon,n,N)$ depending only on 
$\epsilon>0$ and $n,N\in\N$ such that if 
$\uG_1$ and $\uG_2$ are $N$-bounded connected subgroups of $\GL_{n,\F_\ell}$ with
$\uG_1$ reductive, $\ell > \ConstInter(\epsilon,n,N)$, and
$$|\uG_1(\F_\ell)\cap \uG_2(\F_\ell)| > \epsilon |\uG_1(\F_\ell)|,$$
then $\uG_1\subset \uG_2$.
\end{prop}

\begin{proof}
Let $r$ be the rank of $\uG_1$ and $M\ge \ConstMN(n,N)$ an integer.
By Corollary~\ref{M-regular}, every maximal torus $\uT$ of $\uG_1$ defined over $\F_\ell$
contains $o((\ell+1)^r)$ elements which fail to be $M$-regular.  Each regular semisimple element belongs to a unique $\uT$,
so the number of maximal tori containing a regular semisimple element defined over $\F_\ell$
is $O(\ell^{-r}|\uG_1(\F_\ell)|)$.  Thus the number of regular semisimple elements of $\uG_1(\F_\ell)$ which
are not $M$-regular is $o(|\uG_1(\F_\ell)|)$.
By Lang-Weil and the root datum of the connected reductive $\uG_{1,\overline\F_\ell}$ has finitely many possibilities (depending on $n$), 
the number of elements of $\uG_1(\F_\ell)$ which are not regular semisimple is also $o(|\uG_1(\F_\ell)|)$.
We conclude that  if $\ell$ is sufficiently large, more than $(1-\epsilon/2)|\uG_1(\F_\ell)|$ elements $x$ of $\uG_1(\F_\ell)$ are 
regular semisimple and are $M$-regular and therefore
do not lie in $\uG_2(\F_\ell)$
unless the unique maximal torus of $\uG_1$ containing $x$ is contained in some maximal torus of $\uG_2$ by Lemma \ref{reg-contain}.

It follows that $\uG_1$ and $\uG_2$ have at least $(\epsilon/3)  |\uG_1(\F_\ell)|$  elements 
 in common which are regular semisimple and $M$-regular for $\uG_1$, if $\ell$ is sufficiently large.
The group generated by the unique maximal  tori of $\uG_1$ containing these elements is a closed connected subgroup of $\uG_1\cap\uG_2$ containing at least $(\epsilon/3) |\uG_1(\F_\ell)|$ regular semisimple elements.
However, if it is a proper subgroup of $\uG_1$, its dimension is at most $\dim \uG_1 - 1$, so if $\ell$ is sufficiently large, it contains less than $(2/\ell)  |\uG_1(\F_\ell)|$
elements.  Thus $\uG_1\subset \uG_2$.
\end{proof}

\section{Maximality of compact subgroups}

\subsection{Theorem~\ref{gpmain}}
\label{main-thm}

The main goal of this subsection is to establish the following theorem.

\begin{customthm}{\ref{gpmain}}
Let $\bG\subset \GL_{n,\Q_\ell}$ be a connected reductive subgroup,
$\uG\subset \GL_{n,\F_\ell}$ a connected reductive subgroup with $\uG^{\der}$ as derived group and $\uZ$ as connected center, 
$\Gamma$ a closed subgroup of $\bG(\Q_\ell)\cap \GL_n(\Z_\ell)$, and
$\rd\colon \Gamma\to \GL_n(\F_\ell)$ a semisimple continuous representation with $G:=\phi(\Gamma) \subset \uG(\F_\ell)$.
Assume that this data satisfies the following conditions.
\begin{enumerate}[(a)]
\item The subgroup $\Gamma$ is Zariski-dense in $\bG$.
\item There is an equality of semisimple ranks: $\rank \bG^{\der} = \rank \uG^{\der}$.
\item The derived group $\uG^{\der}$ is exponentially generated.
\item For all $\gamma\in \Gamma$, the (mod $\ell$) reduction of the characteristic polynomial of $\gamma$ is the characteristic polynomial of $\rd(\gamma)$.
\item The index $[\uG(\F_\ell):G]$ is bounded by $k\in\N$.
\item The formal character of $(\finiteAlgebraicZ,\F_\ell^n)$ is bounded by $N\in\N$, 
where $\finiteAlgebraicZ$ is the connected center of $\uG$.
\item Condition ($\ast$) holds for $\Gamma$ and $G$, i.e.,
$$\dim_{\Q_\ell} (\End_{\Gamma} (\Q_\ell^n)) = \dim_{\F_\ell} (\End_{G} (\F_\ell^n)).$$

\end{enumerate}
If $\ell$ is sufficiently large in terms of the data in (a)--(g), then
the reduction representation $\Gamma\hookrightarrow\GL_n(\Z_\ell)\to\GL_n(\F_\ell)$ and $\phi$ are 
conjugate, $\Gamma^{\sc}$ is a hyperspecial maximal compact subgroup
of $\bG^{\sc}(\Q_\ell)$, and $\bG^{\der}$ is unramified.
Hypotheses (a)--(f) of Theorem~\ref{gpmain}  suffice to imply that 
$\bG^{\der}$ splits over some finite unramified extension of $\Q_\ell$ and is unramified over 
every degree $12$ totally ramified extension of $\Q_\ell$.
\end{customthm}

\subsection{The condition $(\ast)$}\label{s3.2}
Suppose the conditions (a)--(f) of Theorem \ref{gpmain} hold.
The goal of this subsection is reduce the condition
$(\ast)$ of Theorem \ref{gpmain}(g) to the \emph{semisimple part}, that is, 
the condition \eqref{reducetoder}
in Proposition \ref{Endss}.

\begin{prop}
\label{commu-var}
There exists a constant $\ConstH(k,n,N)$ depending only on $k,n,N\in\N$ such that if
$\uG\subset \GL_{n,\F_\ell}$ is an $N$-bounded connected reductive subgroup with $\ell > \ConstH(k,n,N)$
and derived group $\uG^{\der}$ also $N$-bounded,
$G$ a subgroup of index bounded by $k$ in $\uG(\F_\ell)$, and
$\uS$  the Nori group of $G$, then the following assertions hold.
\begin{enumerate}[(i)]
\item The ambient representation $G\to \GL_n(\F_\ell)$ is semisimple.
\item The ambient representation $\uG\to \GL_{n,\F_\ell}$ is semisimple.
\item The derived group of $\uG$ is $\uS$.
\item The commutant of $G$ in $M_n(\F_\ell)$ consists of the $\F_\ell$-points of 
the commutant of $\uG$ in $M_{n,\F_\ell}$.
\end{enumerate}

Parts (i) and (ii) hold without the $N$-bounded assumption.
\end{prop}

\begin{proof}
If $\ell>k$, then $G\cap \uG^{\der}(\F_\ell)$ contains $\uG^{\der}(\F_\ell)[\ell]$ and therefore $\uG^{\der}(\F_\ell)^+$.
If $\ell$ is sufficiently large in terms of $n$, every characteristic $\ell$ representation of
$\uG(\F_\ell)$ is semisimple \cite{Ja}.  The restriction of a semisimple representation to a normal subgroup is always
semisimple, so $\uG^{\der}(\F_\ell)^+$ acts semisimply on $\F_\ell^n$.  
As $\uG^{\der}(\F_\ell)^+$  is normal in $G\cap \uG^{\der}(\F_\ell)$ and of prime-to-$\ell$ index,
by \cite[\S10, Exercise 8]{CR88}, the latter also acts semisimply on $\F_\ell^n$.
On the other hand, $G\cap \uG^{\der}(\F_\ell)$ is the kernel of a homomorphism from $G$ to the group
of $\F_\ell$-points of the torus $\uG/\uG^{\der}$.  It is therefore a normal subgroup of prime-to-$\ell$ index in $G$,
so applying \cite[\S10, Exercise 8]{CR88} again, $G$ acts semisimply on $\F_\ell^n$.

Part (ii) is true for any connected reductive algebraic group if $\ell$ is large compared to $n$.   (See \cite[Theorem~3.5]{La-SS} when $\uS$ is semisimple, \cite{Ja} in general).

For part (iii), we note that the formal character of $\uG^{\der}$ is bounded by hypothesis, while the formal character of $\uS$ is bounded by Proposition~\ref{exp-gen}.
As $\uG^{\der}(\F_\ell)^+ = \uS(\F_\ell)^+$, this group is of bounded index in both $\uG^{\der}(\F_\ell)$ and $\uS(\F_\ell)$.
By Lemma~\ref{eqrank} and Proposition~\ref{DR1}(iii), this implies
$$\dim (\uG^{\der}) = \dim_\ell (\uG^{\der}(\F_\ell)) = \dim_\ell  (\uS(\F_\ell)^+ )= \dim_\ell (\uS(\F_\ell)) =  \dim \uS$$
for sufficiently large $\ell$.
Thus Proposition~\ref{big-inter} gives
$\uG^{\der} =  \uS$.

Let $x\in M_n(\F_\ell)$ commute with $G$ and its centralizer in $\GL_{n,\F_\ell}$ be $\uZ_x$.
Then $G \subset \uG(\F_\ell)\cap \uZ_x(\F_\ell)$.
Now, $\uZ_x$ is the complement in a linear subvariety of $n\times n$-matrices of the
zero-locus of the determinant, so it is irreducible.
Moreover, centralizers form a constructible family, so  their formal characters are $N$-bounded 
(e.g., by the proof of Proposition~\ref{exp-gen}).
Part (iv) follows by applying  Proposition~\ref{big-inter} as $\uG_1=\uG$ and $\uG_2$ ranges over all groups $\uZ_x$.
\end{proof}

\begin{prop}\label{Endss}
Under the hypotheses of Theorem~\ref{gpmain}, there exists a constant $\ConstK(k,n,N)$ depending only 
on $k,n,N\in\N$ such that
if $\ell > \ConstK(k,n,N)$, the following statements hold.

\begin{enumerate}[(i)]
\item For any finite extension $F$ of $\Q_\ell$ with uniformizer $\pi$ and residue field $\F_{\ell^f}$ and 
any $\mathcal{O}_F$-lattice $\Lattice$ of $F^n:=\Q_\ell^n\otimes_{\Q_\ell} F$ fixed by $\Gamma$, 
the reduction representation
$$\Gamma\hookrightarrow\GL(\Lattice)\to\GL(\Lattice/\pi\Lattice)$$
is isomorphic to $\phi\otimes\F_{\ell^f}$ and thus semisimple;

\item The formal characters of $\uG^{\der}$ and $\bG^{\der}$ coincide.

\item The commutator subgroup $G'$ acts semisimply on $\F_\ell^n$ and
\begin{equation*}
\label{reducetoder}
\tag{$\ast'$}
\dim_{\Q_\ell} (\End_{\Gamma'} (\Q_\ell^n)) = \dim_{\F_\ell} (\End_{G'} (\F_\ell^n)),
\end{equation*}
where $\Gamma'$ is the commutator subgroup of $\Gamma$, i.e., the closure of the group generated by commutators.
\end{enumerate}
\end{prop}

\begin{proof}
For assertion (i),  the Brauer-Nesbitt Theorem and \ref{gpmain}(d) imply that 
the semisimplification of $(\Gamma,\Lattice/\pi\Lattice)$ is isomorphic to $\phi\otimes\F_{\ell^f}$.
Lemma \ref{coh} (for $k=1$) and Lemma \ref{ss}(i) produce the inequalities
\begin{equation*}\label{extineq}
\dim (\End_\Gamma(F^n))\leq\dim(\End_\Gamma (\Lattice/\pi\Lattice))\leq \dim(\End_G(\F_{\ell^f}^n))
\end{equation*}
which,  by Lemma \ref{ext} and ($\ast$), are actually equalities. 
Then the $\F_{\ell^f}$-representation $\Lattice/\pi\Lattice$ of $\Gamma$ is semisimple by Lemma \ref{ss}(ii).

For parts (ii) and (iii), we first note that for some $N'$ depending only on $n$ and $N$,
the three groups $\uG^{\der}$, $\uG$, and $\bG^{\der}$ are $N'$-bounded. 
Indeed, $N'$-boundedness of the first is due to \ref{gpmain}(c) and Proposition~\ref{exp-gen};
$N'$-boundedness of second is due to $N'$-boundedness of the first and \ref{gpmain}(f);
and $N'$-boundedness of third follows since in characteristic $0$, by the Weyl dimension formula, 
there are only finitely many possibilities for formal characters of semisimple groups which admit 
a faithful $n$-dimensional representation.

Now for (ii), choose an integer $M\geq \ConstMN(n,N')$ (defined in $\mathsection\ref{262}$).  
Let $\uT$ be a maximal torus of $\uG^{\der}$. 
Then the index $[\uT(\F_\ell):\uT(\F_\ell)\cap G']$ is bounded by a constant depending only on $k$ and $n$.
By Corollary~\ref{M-regular}, if $\ell$ is sufficiently large in terms of
$n$, $N'$, $M$, and $k$, there exists $g\in \uT(\F_\ell)\cap G'$ which is $M$-regular in $\uT$.  
Let $\gamma\in \Gamma'\subset \bG^{\der}(\Q_\ell)$ be any lift of $g$
and $\gamma_{\ss}\in \bG^{\der}(\Q_\ell)$ its semisimple part.  
Let $\bT$ denote a maximal torus of $\bG^{\der}$ which contains $\gamma_{\ss}$.
Let $\bT_{\GL}$ be a maximal torus of $\GL_{n,\Q_\ell}$ containing $\bT$ and $h\in \GL_n(\overline\Q_\ell)$ an element such that
$h^{-1}\bT_{\GL} h$ is diagonal.  Thus,
\begin{equation}\label{element}
h^{-1}\gamma_{\ss} h = \diag(\lambda_1,\ldots,\lambda_n),
\end{equation}
where the $\lambda_i$ are the eigenvalues of $\gamma_{\ss}$.  They are integral over $\Z_\ell$, 
so they reduce to $\bar\lambda_1,\ldots,\bar\lambda_n\in \overline\F_\ell$, the eigenvalues of $g$.
Define 
\begin{equation}\label{torus2}
\bT_2:=h^{-1}\bT h
\end{equation}
the diagonal torus and $\bT_1$ to be some diagonalization of $\uT_{\overline\F_\ell}$
so that $g\in\uT(\F_\ell)$ goes to $\diag(\bar\lambda_1,\ldots,\bar\lambda_n)$.
Since $\bT_1$ and $\bT_2$ have the same rank by \ref{gpmain}(b),
it follows by Lemma~\ref{MN} that they
are cut out by the same set of characters in $I_M^n\cap X^*(\G_m^n)$, which implies (ii).

To prove (iii), we use Corollary~\ref{abs-irred-red} to replace ($\ast$) and (\ref{reducetoder})
by assertions \ref{abs-irred-red}(ii-a) and \ref{abs-irred-red}(ii-b).
We fix a finite extension $F$ of $\Q_\ell$ over which $F^n$ decomposes as a direct sum of absolutely
irreducible representations for $\Gamma$ and $ \Gamma'$.
By Zariski-density, any decomposition of $F^n$ as a direct sum of  irreducible $\bG$-representations gives
a decomposition into irreducible $\Gamma$-representations, and likewise, a decomposition
into $\bG^{\der}$-irreducibles gives a decomposition into $\Gamma'$-irreducibles.
As every $\bG$-irreducible restricts to a $\bG^{\der}$-irreducible, the same is true for $\Gamma$-irreducibles
and $\Gamma'$-irreducibles.  

By hypothesis, $G$ is of bounded index in $\bG(\F_\ell)$.  Thus $G\cap \uG^{\der}(\F_\ell)$ is of bounded
index in $\uG^{\der}(\F_\ell)$, and its inverse image in $\uG^{\sc}(\F_\ell)$ is of bounded index and therefore
equal to $\uG^{\sc}(\F_\ell)$ if $\ell$ is sufficiently large.  Thus, $G'$ contains the image of $\uG^{\sc}(\F_\ell)\to \uG^{\der}(\F_\ell)$,
which is of bounded index in $\uG^{\der}(\F_\ell)$.  
Applying Proposition~\ref{commu-var} and Lemma \ref{ss}(iv) 
to $G\subset \uG(\F_\ell)$ and $G'\subset \uG^{\der}(\F_\ell)$, we conclude that
an $\overline\F_\ell$-subspace of $\overline\F_\ell^n$ is invariant and irreducible for $G$, if and only if
it is so for $\uG(\overline\F_\ell)$, if and only if it is so for $\uG^{\der}(\overline\F_\ell)$, if and only if it is so for $G'$.
Hence, we obtain \ref{abs-irred-red}(ii-a) for $\Gamma'$.

For \ref{abs-irred-red}(ii-b), it suffices to show that if $W_1\ncong W_2$ are
irreducible subrepresentations of $\Gamma'$ (equivalently $\bG^{\der}$) in $\overline\Q_\ell^n$, then 
their reductions as irreducible representations of $G'$ 
are non-isomorphic for $\ell$ larger than some constant depending only on $k,n,N$. 
Since $\diag(\bar\lambda_1,\ldots,\bar\lambda_n)$ (the reduction of \eqref{element})
and the diagonal torus in \eqref{torus2} 
are annihilated by the same set of characters in $I_M^n$, the actions of $\gamma_{\ss}$ 
on the reductions of $W_1$ and $W_2$ are isomorphic if and only if the actions of $\bT$ (a maximal torus of $\bG^{\der}$)
on $W_1$ and $W_2$ are isomorphic. We are done.
\end{proof}

\begin{prop}
\label{primetonon}
Let $\Gamma\subset \GL_n(\Q_\ell)$ be a compact subgroup, $\Lambda\subset \Q_\ell^n$ a $\Gamma$-stable lattice,  
$\Delta$ a closed normal subgroup of $\Gamma$, and $\gamma\in \Gamma$.  Assume the following conditions hold:
\begin{enumerate}
\item[(a)] $\gamma$ is a semisimple element of $\GL_n(\Q_\ell)$;
\item[(b)] every element in $M_n(\Q_\ell)$ which commutes with $\Delta$ and with $\gamma$ commutes with $\Gamma$;
\item[(c)] if $\lambda_1,\lambda_2\in \overline\Q_\ell$ are distinct eigenvalues of $\gamma$ then $\lambda_1-\lambda_2$ is an $\ell$-adic unit.
\end{enumerate}
Then
\begin{equation}
\label{normal-star}
\dim_{\Q_\ell}(\End_\Delta(\Q_\ell^n)) = \dim_{\F_\ell}(\End_\Delta(\Lambda\otimes\F_\ell)^{\ss})
\end{equation}
implies
\begin{equation}
\label{general-star}
\dim_{\Q_\ell}(\End_\Gamma(\Q_\ell^n)) = \dim_{\F_\ell}(\End_\Gamma(\Lambda\otimes\F_\ell)^{\ss}).
\end{equation}
\end{prop}

\begin{proof}
The left hand side of (\ref{general-star}) is the dimension of the centralizer of $\Gamma$ in $M_n(\Q_\ell)$,
which by (b) is the dimension of the centralizer of $\gamma$ in $\End_\Delta(\Q_\ell^n)$.
By (a), this is the dimension of the $1$-eigenspace of $\gamma$ acting on
$\End_\Delta(\Q_\ell^n)\subset M_n(\Q_\ell)$ by conjugation.

Defining $M := \End_\Delta \Lambda$, we have $M\otimes_{\Z_\ell}\Q_\ell = \End_\Delta(\Q_\ell^n)$.
Equation \eqref{normal-star} and Lemma \ref{ss}(ii) imply that $\Delta$ is semisimple on $\Lambda\otimes\F_\ell$.
Conditions (a) and (c) imply that $\gamma$ is semisimple on $M\otimes\F_\ell$.
Hence, the right hand side  of (\ref{general-star}) is bounded above by the $\F_\ell$-dimension of the $1$-eigenspace 
of $\gamma$ acting on $M\otimes\F_\ell$.  
By (c) and the first paragraph, this is equal to the left hand side of \eqref{general-star}, which implies the two are equal.
\end{proof}

\subsection{Proof of Theorem \ref{gpmain}}

\begin{proof}
We assume that $\ell>k$, which means that every element of $\uG(\F_\ell)$ of order $\ell$ lies in $G$.
As $\uG(\F_\ell)/\uG^{\der}(\F_\ell)$ has prime to $\ell$ order, 
$$G[\ell] = \uG(\F_\ell)[\ell] = \uG^{\der}(\F_\ell)[\ell],$$
so by \ref{gpmain}(c) and Theorem~\ref{NTB}(iii), the Nori group of $G$ equals $\uG^{\der}$.
As $G$ acts semisimply on $\F_\ell^n$, its maximal normal $\ell$-subgroup is trivial.
The composition $\Gamma\hookrightarrow\GL_n(\Z_\ell)\to\GL_n(\F_\ell)$ is a semisimple representation by Corollary~\ref{abs-irred-red},
so by \ref{gpmain}(d) and Brauer-Nesbitt, it is conjugate to $\phi$.

We now suppose the theorem known in the case that $\bG$ and $\uG$ are semisimple.
We defined
$\Gamma'$ to be the topological group generated by commutators in $\Gamma$, but in fact every element of $\Gamma'$ is a finite product of commutators.  Indeed,
the commutator morphism $\bG\times \bG\to \bG^{\der}$ factors through $\bG^{\ss}\times \bG^{\ss}$.  Now $\Gamma^{\ss}$ is a compact
Zariski-dense subgroup of the $\Q_\ell$-points of a semisimple algebraic group, so it is open, by a theorem of Chevalley.
As the generalized commutator morphism $\bG^{\ss}\times \bG^{\ss}\to \bG^{\der}$
is dominant,  the implicit function theorem implies that the set of commutators of elements of $\Gamma^{\ss}$ in $\bG^{\der}(\Q_\ell)$ has non-empty interior.
It follows that every element in $\Gamma'$ can be written as a finite product of commutators.

If $\phi'\colon \Gamma'\to G$ denotes the restriction of $\phi$ to $\Gamma'$, it follows that
$\phi'(\Gamma') = G'$.
Note that $\phi'$ is semisimple, since $G'$ is a normal subgroup of $G$, and the restriction of a semisimple representation to a normal subgroup is again semisimple.
Conditions \ref{gpmain}(a)--(d) for $(\bG^{\der},\uG^{\der},\Gamma', G',\phi')$
are immediate from the same conditions for $(\bG,\uG,\Gamma,G,\phi)$, while \ref{gpmain}(f) is trivial.
By Proposition~\ref{Endss}, \ref{gpmain}(g) for $\Gamma'$ and $G'$ follows from \ref{gpmain}(g) for $\Gamma$ and $G$.

For \ref{gpmain}(e), we note that $G\cap \uG^{\der}(\F_\ell)$ is of index $\le k$ in $\uG^{\der}(\F_\ell)$.
Assuming $\ell > k$, $G$ contains all elements of $\uG^{\der}(\F_\ell)[\ell]$, so
$G\supset \uG^{\der}(\F_\ell)^+$.
The index of $ \uG^{\der}(\F_\ell)^+$ in $\uG^{\der}(\F_\ell)$ is bounded by $2^{n-1}$ by Theorem~\ref{NTB}(ii).
Therefore, at the cost of replacing the index $k$ by $2^{n-1}$, we may assume that all conditions \ref{gpmain}(a)--(g)
hold for $(\bG^{\der},\uG^{\der},\Gamma', G',\phi')$, while $\bG^{\der}$ and $\uG^{\der}$ are semisimple.

Applying the theorem in the semisimple case, we conclude that the inverse image of $\Gamma'$ in $\bG^{\sc}(\Q_\ell)$ is a hyperspecial maximal compact subgroup.
The central isogeny $\bG^{\der}\to \bG^{\ss}$ maps $\Gamma'$ to $(\Gamma^{\ss})'$, so the inverse image of $\Gamma^{\ss}$ in $\bG^{\sc}(\Q_\ell)$, which is compact, contains the inverse
image of $\Gamma'$, which is maximal compact.  This implies the theorem
in the reductive case.

Thus, we may assume without loss of generality that $\bG$ and $\uG$ are semisimple.
By definition, $\Gamma^{\sc}$ is the inverse image of $\Gamma$ in $\bG^{\sc}(\Q_\ell)$.
By Corollary~\ref{index}, the isogeny $\bG^{\sc}(\Q_\ell)\to \bG(\Q_\ell)$ has bounded cokernel,
so we may replace $\Gamma$ with the image of $\Gamma^{\sc}\to \Gamma$
and $G$ with the new $\phi(\Gamma)$ at the cost of increasing $k$ by a bounded factor.
This will not affect \ref{gpmain}(g); the left hand side of ($\ast$) is unchanged since $\Gamma$ is Zariski dense in 
$\bG$, and $\bG$ is connected, while the left hand side is unchanged by Proposition~\ref{commu-var}(iv).

Thus, we may assume $\Gamma^{\sc}$ maps onto $\Gamma$, and for $\Gamma$-representations,
$\Gamma^{\sc}$-invariance is the same as $\Gamma$-invariance.
By Lemma~\ref{ss-kernel}, 
$G$ is the quotient of $\Gamma$ by its maximal normal pro-$\ell$ subgroup.
Let 
\begin{equation}\label{maxcpt}
\Pi\subset\bG(\Q_\ell)
\end{equation}
be a maximal compact subgroup containing $\Gamma$.
Then $\Pi^{\sc}\subset\bG^{\sc}(\Q_\ell)$ is a maximal compact subgroup containing $\Gamma^{\sc}$ 
and fixes some vertex $x_0$ in the Bruhat-Tits building $\mathscr{B}(\bG^{\sc},\Q_\ell)$. 
The vertex $x_0$ corresponds to a group scheme  $\GroupSchemeZ/\Z_\ell$
with generic fiber isomorphic to $\bG^{\sc}$.
When $\ell$ is large enough depending on $k$, the total $\ell$-rank of $\Gamma$ 
is equal to $\rank\bG^{\sc}$ by \ref{gpmain}(b) and \ref{gpmain}(e).
Lemma \ref{subgroup} and Theorem \ref{compare}(i) imply
that the total $\ell$-rank of  $\Pi$ (and hence $\Pi^{\sc}=\GroupSchemeZ(\Z_\ell)$) is equal to $\rank\bG^{\sc}$ if $\ell$ is large enough depending on $n$.
Then Theorem~\ref{compare}(iv) implies that 
there exist some finite totally ramified extension $F/\Q_\ell$ and a hyperspecial maximal compact subgroup $\Omega$
of $\bG^{\sc}(F)$ corresponding to a semisimple 
group scheme $\GroupSchemeO/O_F$ ($\mathsection\ref{241}$) such that 
$$\Gamma^{\sc}\subset \GroupSchemeZ(\Z_\ell)\subset \Omega=\GroupSchemeO(O_F)\subset\bG^{\sc}(F).$$


As $\GroupSchemeO(O_F)$ is compact, it stabilizes some $O_F$-lattice $\Lambda\subset F^n$.
Let $\psi\colon \Gamma^{\sc}\to \GL(\Lambda\otimes\F_\ell)\cong \GL_n(\F_\ell)$ denote the composition of the maps
$$\Gamma^{\sc}\hookrightarrow \GroupSchemeZ(\Z_\ell)\hookrightarrow \GroupSchemeO(O_F)\to \GL_{O_F} \Lambda
\to \GL_{\F_\ell} (\Lambda\otimes \F_\ell).$$
By Brauer-Nesbitt, $\psi^{\ss}$ is equivalent to $\phi$, so by 
Corollary~\ref{abs-irred-red}, $\psi$ is equivalent to $\phi$.
As $\Gamma^{\sc}$ and $\GroupSchemeO(O_F)$ are both Zariski-dense in $\bG^{\sc}_F$,
\begin{equation}
\label{ineq1}
\begin{split}
\dim_F (\End_{\Gamma^{\sc}} (\Lambda\otimes_{O_F}F)) &=  \dim_F (\End_{\GroupSchemeO(O_F)} (\Lambda\otimes F)) \\
&\le \dim_{\F_\ell} (\End_{\GroupSchemeO(O_F)} (\Lambda\otimes\F_\ell)).
\end{split}
\end{equation}
By \ref{gpmain}(g) and $\Gamma^{\sc}\subset \GroupSchemeO(O_F)$,
\begin{equation}
\label{ineq2}
\begin{split}
\dim_F (\End_{\Gamma^{\sc}} (\Lambda\otimes_{O_F}F)) &= \dim_{\F_\ell}(\End_{\Gamma^{\sc}} (\Lambda\otimes\F_\ell)) \\
& \ge \dim_{\F_\ell} (\End_{\GroupSchemeO(O_F)} (\Lambda\otimes\F_\ell)).
\end{split}
\end{equation}
It follows that equality holds in both (\ref{ineq1}) and (\ref{ineq2}).
By Lemma~\ref{ss}(iii), 
$\GroupSchemeO(O_F)$  acts semisimply on $\Lambda\otimes\F_\ell \cong\F_\ell^n$.

By Lemma~\ref{ss-kernel}, the image $I$ of $\GroupSchemeO(O_F)$ in $\GL (\Lambda\otimes \F_\ell)$ is the quotient of
$\GroupSchemeO(O_F)$ by its maximal normal pro-$\ell$ subgroup, which is the quotient of $\GroupSchemeO(\F_\ell)$ by
a subgroup $Z$ of its center. 
So $I$ is a subgroup of bounded index of the $\F_\ell$-points of the semisimple group 
$\GroupSchemeO_{\F_\ell}/Z$ (isogenous to $\GroupSchemeO_{\F_\ell}$).
As the image of $\psi$ is contained in the image of $\GroupSchemeO(O_F)$
in $\GL(\Lambda\otimes \F_\ell)$, we obtain an embedding of $G$ in $I$, with
$$\dim_{\F_\ell} (\End_{G}(\Lambda\otimes\F_\ell)) = \dim_{\F_\ell} (\End_{I}(\Lambda\otimes\F_\ell)) = 
\dim_{\Q_\ell} (\End_{\Gamma^{\sc}} (\Q_\ell^n)).$$
The image $H$ of $\GroupSchemeZ(\Z_\ell)$ in $I$ satisfies $G\subset H\subset I$.

Let $\uH$ and $\uI$ denote the Nori groups of $H$ and $I$ respectively, so $\uG\subset\uH\subset \uI$.  If $\ell$ is sufficiently large, $\uI$ is  semisimple by Proposition~\ref{serre}(i) and
of rank equal to $\rk_\ell I=\rank(\GroupSchemeO_F/Z)=\rank\GroupSchemeO_F$ by Lemma~\ref{eqrank} and Propositions \ref{DR0} and \ref{DR1}. By Proposition~\ref{commu-var}(iv), the commutants of $\uG$ and $\uI$ in $\End (\Lambda\otimes\F_\ell)$ have the same dimension; they must therefore be the same.   By hypothesis, $\uG$ is  semisimple, and we have equality of ranks:
$$\rank \uG = \rank \bG = \rank \GroupSchemeO_F =  \rank \uI.$$
By the Borel--de Siebenthal Theorem \cite{Gi}, $\uG = \uI$, and it follows that $\uH=\uG$ is likewise  semisimple.  We have
$$\uG(\F_\ell)^+\subset G\subset H\subset I\subset \uG(\F_\ell).$$

As $H$ acts semisimply, since the image of $\ker(\cH(\Z_\ell)\to \cH(\F_\ell))$ in $H$ is a normal $\ell$-subgroup, it must be trivial.  Thus, $H$ is a quotient of $\cH(\F_\ell)$.  
If the vertex $x_0\in \mathscr{B}(\bG^{\sc},\Q_\ell)$ associated to $\cH$ is not hyperspecial, then the unipotent radical of $\cH_{\F_\ell}$ is non-trivial.  
Since $\cH$ is flat, the dimension of $\cH_{\F_\ell}$ equals the dimension of $\bG$, which is also the dimension of $\GroupSchemeO_{\F_\ell}$
and therefore the dimension of $\uG=\uH=\uI$.  
By Proposition \ref{DR1}(iii), we obtain 
$$\dim_\ell H\le \dim_\ell (\cH(\F_\ell)) = \dim \cH_{\F_\ell}^{\ss} < \dim \cH_{\F_\ell} = \dim \uG = \dim_\ell G,$$
which is impossible by  Lemma \ref{subgroup} since $G\subset H\subset\GL_n(\F_\ell)$.
Thus $\cH(\Z_\ell)$ is a hyperspecial maximal compact subgroup,
which means that $\cH(\F_\ell)$ is the group of $\F_\ell$-points of a simply connected semisimple algebraic group over $\F_\ell$,
and $H$ is a quotient of $\cH(\F_\ell)$ by a subgroup of its center.  
As $G$ is of bounded index in $H$, the compact subgroup $\Gamma^{\sc}$ of $\cH(\Z_\ell)$ maps onto
a bounded index subgroup of $\cH(\F_\ell)$ which has to be $\cH(\F_\ell)$ itself when $\ell$ is sufficiently large.   
By a theorem of Vasiu \cite[Theorem~1.3]{Va03}, this implies 
$\Gamma^{\sc} = \cH(\Z_\ell)$, as claimed.

Condition ($\ast$) is used only to prove that the commutants of $\uG$ and $\uI$ have
the same dimension.  In any case, we have $\rank \uG = \rank \uI = \rank \bG$ and therefore $\rk_\ell (\uG(\F_\ell)) = \rank \bG$.  As $G$ is of bounded index in $\uG(\F_\ell)$, if $\ell$ is sufficiently large, $\rk_\ell G = \rk_\ell (\uG(\F_\ell))$, 
and since $G$ is a quotient of $\Gamma$, we obtain
$\rank \bG = \rk_\ell \Gamma \leq \rk_\ell \Pi$ by the construction \eqref{maxcpt} and Lemma \ref{subgroup}.  
Theorem~\ref{compare}(i)--(iii) now imply the remaining claims since $\Pi$ is maximal compact in $\bG(\Q_\ell)$.
\end{proof}

\section{Maximality of Galois actions}

\subsection{Algebraic envelopes}\label{41}
Let $\{\rho_\ell\}_\ell$ be the system of $\ell$-adic representations in Theorem \ref{main}.
The monodromy group (resp. algebraic monodromy group) of $\rho_\ell$ is denoted by $\Gamma_\ell$  (resp. $\bG_\ell$).
The quotient of $\bG_\ell$ by its unipotent radical is denoted by $\bG_\ell^{\red}$.
If $X$ is a projective non-singular variety over $K$, then 
for each $\ell$, the image of $H^i(X_{\overline K},\Z_\ell)$ in $H^i(X_{\overline K},\Q_\ell)\cong \Q_\ell^n$
is a $\Z_\ell$-lattice $\Lambda_\ell$ stabilized by $\rho_\ell$, and
\begin{equation}\label{modl}
\finiteRhoSS:\Gal_K\to \GL_n(\F_\ell)
\end{equation}
denotes the semisimplification of the (mod $\ell$) reduction of $\rho_\ell:\Gal_K\to \GL_n(\Z_\ell)$ 
(the action of $\Gal_K$ on this lattice).
Denote by $\finiteG_\ell$ the image $\finiteRhoSS(\Gal_K)$ for all $\ell$. 
In \cite{Hu15}, we construct the algebraic envelope
$\finiteAlgebraicG_\ell$
(a connected reductive subgroup of $\GL_{n,\F_\ell}$) 
of $\finiteG_\ell$ 
to study the $\ell$-independence of the total $\ell$-rank and the $\mathfrak{g}$-type $\ell$-rank of $\finiteG_\ell$ 
for all sufficiently large $\ell$.
The idea of constructing such a $\finiteAlgebraicG_\ell$ is due to Serre \cite{Se86a},
who considered the Galois action on the $\ell$-torsion points of abelian varieties without complex multiplication (see also \cite{Ca15}).
The algebraic envelope $\finiteAlgebraicG_\ell$ can be written as $\finiteAlgebraicS_\ell\finiteAlgebraicZ_\ell$ where
$\finiteAlgebraicS_\ell:=\finiteAlgebraicG_\ell^{\der}$ is also the Nori group  
of $\finiteG_\ell\subset\GL_n(\F_\ell)$ (by \cite[$\mathsection2.5$]{Hu15})
and $\finiteAlgebraicZ_\ell$ is the identity component of  the center of $\finiteAlgebraicG_\ell$ . 
Theorems \ref{ae} and \ref{thmA} below present the key properties of the algebraic envelopes $\finiteAlgebraicG_\ell$.

\begin{thm}\label{ae}\cite[Theorem 2.0.5, the proof of Theorem 2.0.5(iii)]{Hu15}
After replacing $K$ by a finite normal field extension $L$ if necessary, for all sufficiently large $\ell$, 
the algebraic envelope $\finiteAlgebraicG_\ell\subseteq \GL_{n,\F_\ell}$ has the following properties:
\begin{enumerate}
\item[(i)] $\finiteG_\ell$ is a subgroup of $\finiteAlgebraicG_\ell(\F_\ell)$ 
whose index is bounded uniformly independent of $\ell$;
\item[(ii)] $\finiteAlgebraicG_\ell$ acts semisimply on the ambient space;
\item[(iii)] the representations $\{\finiteAlgebraicS_\ell\to \GL_{n,\F_\ell}\}_{\ell\gg0}$ and 
$\{\finiteAlgebraicZ_\ell\to \GL_{n,\F_\ell}\}_{\ell\gg0}$
have bounded formal characters and in particular, $\{\uG_\ell\to \GL_{n,\F_\ell}\}_{\ell\gg0}$
has bounded formal characters.
\end{enumerate}
\end{thm}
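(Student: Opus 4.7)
I would construct $\finiteAlgebraicG_\ell$ as $\finiteAlgebraicS_\ell\cdot\finiteAlgebraicZ_\ell$, where $\finiteAlgebraicS_\ell$ is the Nori envelope of $\finiteG_\ell^+$ from \S\ref{Nori} and $\finiteAlgebraicZ_\ell$ is a central torus built to absorb the prime-to-$\ell$ commutative complement supplied by Jordan--Nori (Theorem \ref{NTC}). Items (i) and (ii) will follow from Nori's theory together with Proposition \ref{ssaction}; item (iii), which compares the mod-$\ell$ structure across all sufficiently large $\ell$, is where the compatible-system structure coming from \'etale cohomology is essential and is the main obstacle.

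\textbf{Construction and items (i), (ii).} Apply Theorem \ref{NTC} to obtain a commutative subgroup $Z_\ell\subset\finiteG_\ell$ of order prime to $\ell$ with $Z_\ell\cdot\finiteG_\ell^+$ normal in $\finiteG_\ell$ of index at most $C_2(n)$. By Theorem \ref{NTB}(i) we have $\finiteG_\ell^+=\finiteAlgebraicS_\ell(\F_\ell)^+$, and by (\ref{plus-cent}) every element of $Z_\ell$ commutes with $\finiteAlgebraicS_\ell$. I define $\finiteAlgebraicZ_\ell$ to be the identity component of the Zariski closure of $Z_\ell\cdot Z(\finiteAlgebraicS_\ell)$ inside the centraliser of $\finiteAlgebraicS_\ell^{\der}$ in $\GL_{n,\F_\ell}$, and set $\finiteAlgebraicG_\ell:=\finiteAlgebraicS_\ell\cdot\finiteAlgebraicZ_\ell$. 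For (ii), semisimplicity of $\finiteRhoSS$ on $\F_\ell^n$ gives the hypothesis of Proposition \ref{ssaction}, so $\finiteAlgebraicS_\ell$ is semisimple and acts semisimply; since $\finiteAlgebraicZ_\ell$ is a torus, $\finiteAlgebraicG_\ell$ is reductive and its action on $\F_\ell^n$ is semisimple. For (i), the index $[\finiteAlgebraicG_\ell(\F_\ell):\finiteG_\ell]$ is controlled by the product of $C_2(n)$ from Jordan--Nori, the constant $2^{n-1}$ from Theorem \ref{NTB}(ii), and a bound on $|\finiteAlgebraicZ_\ell(\F_\ell)/Z_\ell|$ whose uniformity in $\ell$ is a consequence of (iii).

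\textbf{Item (iii): the main obstacle.} The bounded-formal-character assertion says that the isomorphism classes of $(\finiteAlgebraicS_\ell,\F_\ell^n)$ and $(\finiteAlgebraicZ_\ell,\F_\ell^n)$ form a finite set as $\ell$ varies. I would follow Serre's method of Frobenius tori applied to the compatible system $\{\rho_\ell\}_\ell$. For each prime $v$ of $K$ where every $\rho_\ell$ is unramified, the semisimple Frobenius $\rho_\ell(F_v)$ has characteristic polynomial in $\Z[x]$ independent of $\ell$ by Deligne's proof of the Weil conjectures, so the Frobenius torus $\bT_v\subset\bG_\ell$ has an $\ell$-independent formal character on $\Q_\ell^n$. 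A density argument produces a finite set $v_1,\ldots,v_m$ whose Frobenius tori jointly generate a maximal torus of $\bG_\ell^\circ$ for all $\ell\gg 0$, pinning down the formal character of $\bG_\ell^\circ$ up to finitely many possibilities. To descend this to $\finiteAlgebraicG_\ell$ I would invoke Brauer--Nesbitt: for $\ell\gg 0$ the characteristic polynomial of $\finiteRhoSS(F_v)$ is the mod-$\ell$ reduction of that of $\rho_\ell(F_v)$, and the mod-$\ell$ analogue of the Frobenius torus is the reduction of an integral model of $\bT_v$. Separating the contributions of $\finiteAlgebraicS_\ell$ (controlled by the Nori group $\finiteG_\ell^+$) from those of $\finiteAlgebraicZ_\ell$ (controlled by the prime-to-$\ell$, $n$-bounded $Z_\ell$) is then forced by the $\ell$-rank and $\ell$-dimension bookkeeping of \S\ref{s2.3}. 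The hardest technical point is ensuring that this integer-level comparison remains valid uniformly across all sufficiently large $\ell$, including primes at which the integral Chevalley models of $\bG_\ell^\circ$ and of $\bT_v$ would otherwise develop bad reduction; this is where one must use that the exceptional set of $\ell$ introduced by each $v_j$ is finite and independent of the others.
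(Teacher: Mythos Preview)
This theorem is not proved in the present paper; it is quoted from \cite{Hu15}. Nevertheless, your outline has a genuine gap at the construction step that would prevent the argument from going through.

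Your proposed $\finiteAlgebraicZ_\ell$, defined as the identity component of the Zariski closure of $Z_\ell\cdot Z(\finiteAlgebraicS_\ell)$, is trivial: both $Z_\ell$ (a finite group of prime-to-$\ell$ order) and $Z(\finiteAlgebraicS_\ell)$ (the center of a semisimple group) are finite, hence already Zariski-closed and zero-dimensional, so the identity component of their product is $\{1\}$. With $\finiteAlgebraicZ_\ell=1$ you get $\finiteAlgebraicG_\ell=\finiteAlgebraicS_\ell$, and then (i) fails whenever $\bG_\ell$ has a nontrivial connected center: for example, for an abelian variety the cyclotomic character contributes a cyclic subgroup of scalars of order $\ell-1$ to $\finiteG_\ell$, and this cannot sit inside the $\F_\ell$-points of a semisimple group with uniformly bounded index. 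The related claim that the index bound in (i) reduces to bounding $|\finiteAlgebraicZ_\ell(\F_\ell)/Z_\ell|$ ``as a consequence of (iii)'' is then vacuous, since with your definition $\finiteAlgebraicZ_\ell(\F_\ell)=\{1\}$ while $Z_\ell$ is typically large.

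The Frobenius-torus machinery you correctly invoke for (iii) is in fact what is needed to \emph{construct} $\finiteAlgebraicZ_\ell$ in the first place, not merely to bound its formal character after the fact. In \cite{Hu15} (following Serre \cite{Se86a}), one chooses a Frobenius $F_v$ whose semisimple part generates a maximal torus $\bT_v\subset\bG_\ell$; because the eigenvalues of $\rho_\ell(F_v)$ are algebraic integers with characteristic polynomial in $\Z[x]$ independent of $\ell$, the torus $\bT_v$ has an integral model whose reduction mod $\ell$ gives a genuine torus $\finiteAlgebraicT_\ell\subset\GL_{n,\F_\ell}$ with $\ell$-independent formal character. One then sets $\finiteAlgebraicG_\ell=\finiteAlgebraicS_\ell\cdot\finiteAlgebraicT_\ell$ and takes $\finiteAlgebraicZ_\ell$ to be its connected center. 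With this construction the bounded formal character of $\finiteAlgebraicZ_\ell$ is essentially built in, and (i) follows by comparing $\finiteG_\ell$ with the reduction of this integral torus together with Nori's bounds. Your sketch has the right ingredients but assembles them in the wrong order: the Jordan--Nori complement $Z_\ell$ is a finite shadow of the torus you want, not a generator for it.
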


\begin{thm}\label{thmA}\cite[Theorem A, Theorem 3.1.1]{Hu15}
Let $\bG_\ell$ be the algebraic monodromy group of $\rho_\ell$.
After replacing $K$ by a finite normal field extension $L$ if necessary, the following statements hold
for all sufficiently large $\ell$.
\begin{enumerate}
\item[(i)] The formal character of $\finiteAlgebraicS_\ell\to \mathrm{GL}_{n,\F_\ell}$ 
(resp. $\uG_\ell\to \mathrm{GL}_{n,\F_\ell}$) is independent of $\ell$ and is equal to the formal character of  
$(\bG_\ell^{\red})^{\der}\to \GL_{n,\Q_\ell}$ (resp. $\bG_\ell^{\red}\to\GL_{n,\Q_\ell}$).
\item[(ii)] The non-abelian composition factors of $\finiteG_\ell$ 
and the non-abelian composition factors of $\finiteAlgebraicS_\ell(\mathbb{F}_\ell)$ are in bijective correspondence.
Thus, the composition factors of $\finiteG_\ell$ are finite simple groups of Lie type in characteristic $\ell$ and cyclic groups.
\end{enumerate}
\end{thm}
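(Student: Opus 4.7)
My plan is to deduce both statements via Serre's method of Frobenius tori, executed in parallel for $\bG_\ell^{\der}$ in characteristic zero and for $\finiteAlgebraicS_\ell$ in characteristic $\ell$. Fix a finite set $S$ of places of $K$ containing the archimedean places and all places of bad reduction; for $v\notin S$ not above $\ell$, $\rho_\ell$ is unramified at $v$, and by Deligne's strict compatibility the characteristic polynomial $P_v(T)\in\Z[T]$ of $\rho_\ell(\mathrm{Frob}_v)$ is independent of $\ell$. By Brauer--Nesbitt, the reduction mod $\ell$ of $P_v(T)$ is the characteristic polynomial of $\finiteRhoSS(\mathrm{Frob}_v)$, so the semisimple part of $\finiteRhoSS(\mathrm{Frob}_v)$ has the same multiset of eigenvalues (over $\bar\F_\ell$) as the reduction of the roots of $P_v(T)$.

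For (i), Theorem \ref{ae}(iii) says the formal characters of $\finiteAlgebraicS_\ell\hookrightarrow \GL_{n,\F_\ell}$ lie in a finite set, so after passing to an infinite subsequence of primes I may assume this formal character is constant, defining a fixed partition $p$ of $n$. By Serre's Frobenius-torus theorem applied to $\bG_\ell$, there is a density-one set of places $v$ such that $\rho_\ell(\mathrm{Frob}_v)^{\ss}$ generates a maximal torus of $\bG_\ell^\circ$; for such $v$, $\mathrm{Frob}_v$ is regular in the sense of \eqref{part}, i.e., the partition $p_v$ determined by the multiset of roots of $P_v(T)\in\Z[T]$ coincides with the formal-character partition of $\bG_\ell^{\der}\hookrightarrow \GL_{n,\Q_\ell}$. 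Fix such a $v$ which is moreover regular for infinitely many $\ell$ (possible because the dimension of the Frobenius torus is bounded and generically attains its maximum). In characteristic $\ell$, for $\ell$ large the semisimple part of $\finiteRhoSS(\mathrm{Frob}_v)$ has order prime to $\ell$ and generates a torus in $\finiteAlgebraicG_\ell(\bar\F_\ell)$, whose partition of generalized eigenvalues refines $p$ by \eqref{part}. But this partition is the reduction of $p_v$, and since $P_v(T)$ is fixed, for $\ell$ larger than the discriminant of $P_v$ no collisions occur. Hence $p$ equals the characteristic-zero partition.

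For (ii), I would combine Nori's Theorems \ref{NTB} and \ref{NTC}. Theorem \ref{NTC} provides a commutative subgroup $Z\subseteq\finiteG_\ell$ of prime-to-$\ell$ order with $[\finiteG_\ell:Z\cdot\finiteG_\ell^+]$ uniformly bounded, while Theorem \ref{NTB} gives $\finiteG_\ell^+=\finiteAlgebraicS_\ell(\F_\ell)^+$ with $\finiteAlgebraicS_\ell(\F_\ell)/\finiteAlgebraicS_\ell(\F_\ell)^+$ abelian of order at most $2^{n-1}$. All of these extraneous contributions to composition series are abelian (of bounded order), so the non-abelian composition factors of $\finiteG_\ell$ and of $\finiteAlgebraicS_\ell(\F_\ell)$ both coincide with those of the common normal subgroup $\finiteG_\ell^+=\finiteAlgebraicS_\ell(\F_\ell)^+$, yielding the bijective correspondence. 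Part (i) together with Theorem \ref{ae}(ii) and Proposition \ref{ssaction} shows that $\finiteAlgebraicS_\ell$ is semisimple for $\ell\gg0$, so these composition factors are finite simple groups of Lie type in characteristic $\ell$; the remaining composition factors of $\finiteG_\ell$ are cyclic.

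The main obstacle is step (i): arranging that a single Frobenius place $v$ serves simultaneously as a regular element for $\bG_\ell$ in characteristic zero and for $\finiteAlgebraicS_\ell$ in characteristic $\ell$, uniformly in $\ell$. This requires knowing that reduction mod $\ell$ of the roots of $P_v(T)$ introduces no accidental eigenvalue coincidences for $\ell$ large, which is controlled once $v$ is fixed and uses crucially the boundedness of formal characters (Theorem \ref{ae}(iii)) to reduce to finitely many target partitions. The subtlety is ruling out the possibility that the Frobenius image, while regular in characteristic zero, fails to be regular after mod-$\ell$ reduction for a positive-density set of $\ell$.
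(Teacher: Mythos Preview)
The paper does not prove Theorem~\ref{thmA}; it is quoted verbatim from \cite[Theorem~A]{Hu15} and used as a black box. So there is no ``paper's own proof'' to compare against here, only the question of whether your sketch could stand on its own.

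Your argument for (ii) is correct and is exactly the standard deduction from Nori's Theorems~\ref{NTB} and~\ref{NTC}: both $\finiteG_\ell$ and $\finiteAlgebraicS_\ell(\F_\ell)$ differ from their common subgroup $\finiteG_\ell^+=\finiteAlgebraicS_\ell(\F_\ell)^+$ by abelian pieces of bounded order, so their non-abelian composition factors agree.

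Your argument for (i), however, has a genuine gap. You only compare the \emph{partitions} $p_V$ associated to the formal characters, i.e., the multiset of multiplicities $\{m_\chi\}$. But the formal character is the full element $\sum_\chi m_\chi[\chi]\in\Z[X^*(\bT)]$, considered up to isomorphism of the character lattice. Two representations can have identical partitions yet different formal characters: on a rank-one torus with generator $\chi$, the characters $[\chi]+[\chi^{-1}]$ and $[\chi^2]+[\chi^{-2}]$ both give the partition $1+1$. So showing that no eigenvalue collisions occur upon reduction mod $\ell$ (your stated ``main obstacle'') gives only $p_{\finiteAlgebraicS_\ell}=p_{\bG_\ell^{\der}}$, which is strictly weaker than equality of formal characters.

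What is missing is control of the \emph{multiplicative relations} among the Frobenius eigenvalues, not just their distinctness. The torus generated by $\rho_\ell(\mathrm{Frob}_v)^{\ss}$ has character lattice determined by the $\Z$-linear relations among the $\log$ of the roots of $P_v$; to identify the formal characters you must show these relations survive reduction mod $\ell$ and that no new relations appear, and then match the resulting sublattices of $X^*$ in characteristics $0$ and $\ell$. You also need to pass from the maximal torus of $\bG_\ell$ (where the Frobenius sits) to one of $\bG_\ell^{\der}$, and similarly from $\finiteAlgebraicG_\ell$ to $\finiteAlgebraicS_\ell$; this requires separating the central and semisimple contributions to the weight lattice, which is where the boundedness in Theorem~\ref{ae}(iii) for \emph{both} $\finiteAlgebraicS_\ell$ and $\finiteAlgebraicZ_\ell$ is actually used. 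Your sketch invokes that boundedness only to pass to a subsequence with fixed partition, which does not address either of these points.
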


Note that the implicit constants in Theorems~\ref{ae} and \ref{thmA} depend only on the system (\ref{modl})
of (mod $\ell$) Galois representations.

\begin{remark}\label{bichar}
The formal bi-character \cite[Definition 2.3]{Hu18} of $\bG_\ell^{\red}\to \GL_{n,\Q_\ell}$ is independent of $\ell$ \cite[Theorem 3.19]{Hu13}.
\end{remark}

\begin{prop}
The system of algebraic envelopes $\{\uG_\ell\subset\GL_{n,\F_\ell}\}_{\ell\gg0}$ 
is characterized by the conditions \ref{ae}(i) and (iii) in the sense that 
if $\{\uH_\ell\subset\GL_{n,\F_\ell}\}_{\ell\gg0}$ is another system of connected reductive subgroups such that
for $\ell\gg0$, $G_\ell$ is a subgroup of $\uH_\ell(\F_\ell)$ whose index is uniformly bounded and 
the formal character of $\uH_\ell$ is uniformly bounded,
then $\uG_\ell=\uH_\ell$ for all sufficiently large $\ell$.
\end{prop}

\begin{proof}
This follows directly from Proposition \ref{big-inter}.
\end{proof}

\begin{thm}\label{connect}
If the algebraic monodromy group $\bG_\ell$ is connected for all $\ell$,
then $G_\ell\subset\uG_\ell(\F_\ell)$ for all sufficiently large $\ell$.
\end{thm}

\begin{proof}
Let $L$ be a finite normal extension of $K$ in Theorem \ref{ae} such that 
$\bar\rho_\ell^{\ss}(\Gal_L)\subset\uG_\ell(\F_\ell)$ for $\ell\gg0$.
Let $\uN_\ell$ be the normalizer of the Nori group $\uS_\ell$ in $\GL_{n,\F_\ell}$.
Then $G_\ell\subset\uN_\ell(\F_\ell)$.

We claim that $G_\ell=\bar\rho_\ell^{\ss}(\Gal_K)$ normalizes the connected reductive group $\uG_\ell$ for $\ell\gg0$.
By construction (see \cite[Proof of Theorem 2.0.5(i) and (ii)]{Hu15}), 
$\uG_\ell$ is the preimage of a torus $\uI_\ell\subset\GL_{W_\ell}$
under some morphism\footnote{The groups $\uI_\ell,\uS_\ell,\uN_\ell,\uG_\ell$ 
are denoted $\bar{\textbf{I}}_\ell,\bar{\textbf{S}}_\ell,\bar{\textbf{N}}_\ell,\bar{\textbf{G}}_\ell$ in \cite{Hu15}.}
$$t_\ell:\uN_\ell\twoheadrightarrow \uN_\ell/\uS_\ell\hookrightarrow\GL_{W_\ell},$$ 
where $W_\ell$ is some $\F_\ell$-vector space whose dimension is bounded independent of $\ell$.
Since the index $[\uI_\ell(\F_\ell):t_\ell(\bar\rho_\ell^{\ss}(\Gal_L))]$ 
and the formal character of $\uI_\ell\subset\GL_{W_\ell}$ are both bounded independent of $\ell$ \cite[Theorem 2.4.2]{Hu15},
the normality of the field extension $L/K$ 
and Proposition \ref{big-inter} imply that $t_\ell(G_\ell)$ normalizes $\uI_\ell$ for all sufficiently large $\ell$.
Hence, the product $G_\ell\uG_\ell$ is a subgroup of $\GL_{n,\F_\ell}$ with identity component $\uG_\ell$ for $\ell\gg0$.

The number of conjugacy classes of elements of the finite group $\Gal(L/K)$ is bounded by $m:=[L:K]$.
Since $\bG_\ell$ is connected for all $\ell$ 
and the strictly compatible system $\{\rho_\ell\}_\ell$ is pure of weight $i$,
the method of Frobenius tori of Serre
(see for example \cite{LP97}, \cite[Theorem 2.6, Corollary 2.7]{Hu18})
implies that there is a Dirichlet density one set of finite places $v$ of $K$ such that 
the \emph{Frobenius torus} $\bT_{\bar v,\ell}\subset\GL_{n,\Q_\ell}$ 
is a maximal torus of $\bG_\ell$ if $v\nmid \ell$ and $\bar v$ 
is some place on $\overline K$ extending $v$ on $K$.
Thus, for each conjugacy class $c$ of $\Gal(L/K)$ we can fix a finite place $v_c$ of $K$ (unramified in $L$) 
mapping to $c$ and a place $\bar v_c$ of $\overline K$ above $v_c$ such 
that $\bT_{\bar v_c,\ell}$ is a maximal torus of $\bG_\ell$ for $\ell\gg0$.
To prove $G_\ell\uG_\ell=\uG_\ell$ for $\ell\gg0$, 
it suffices to show for each $c$ and all sufficiently large $\ell$, the semisimple part 
$\bar\rho_\ell^{\ss}(\text{Fr}_{\bar v_c})_{\ss}\in\uG_\ell(\F_\ell)$. 

For each $c$, there exist a torus $\bT_c\subset\GL_{n,\Q}$ and an element $\gamma_c\in\bT_c(\Q)$ 
such that for all sufficiently large $\ell$, the chain 
\begin{equation}\label{chain1}
\gamma_c\in\bT_c\subset\GL_{n,\Q}
\end{equation}  
is conjugate to the chain
\begin{equation}\label{chain2}
\rho_\ell(\text{Fr}_{\bar v_c})_{\ss}\in \bT_{\bar v_c,\ell}\subset \GL_{n,\Q_\ell}
\end{equation}
by some element in $\GL_n(\Q_\ell)$. For $\ell\gg0$, the reduction modulo $\ell$
\begin{equation}\label{chain3}
g_{c,\ell}\in\uT_{c,\ell}\subset\GL_{n,\F_\ell}
\end{equation} 
 of \eqref{chain1} can be well-defined and 
the two semisimple elements $g_{c,\ell}$ and $\bar\rho_\ell^{\ss}(\text{Fr}_{\bar v_c})_{\ss}$ in $\GL_n(\F_\ell)$
are conjugate since they have the same characteristic polynomial.
Without loss of generality, assume $g_{c,\ell}=\bar\rho_\ell^{\ss}(\text{Fr}_{\bar v_c})_{\ss}$.
The formal characters of $\uT_{c,\ell}$ and $\uG_\ell$ are equal and bounded by some $N\in\N$ independent of $\ell\gg0$ by Theorem \ref{thmA}(i).
Since the powers $\gamma_c^m$ are Zariski dense in $\bT_c$, there exists $M\geq \ConstMN(n,N)$ (in $\mathsection\ref{262}$) 
such that for $\ell\gg0$ the torus $\uT_{c,\ell}\subset\GL_{n,\F_\ell}$ after diagonalization 
is the intersection of the kernels of some characters in $I^n_M\cap X^*(\G^n_m)$ and
the element $g_{c,\ell}^m$ is $M$-regular in $\uT_{c,\ell}$ .
Since we have 
$$g_{c,\ell}^m=\bar\rho_\ell^{\ss}(\text{Fr}_{\bar v_c})_{\ss}^m\in \uG_\ell(\F_\ell),$$
it follows by Lemma \ref{reg-contain} and the $M$-regularity of $g_{c,\ell}^m\in\uT_{c,\ell}$ that $\uT_{c,\ell}\subset \uG_\ell$ for $\ell\gg0$.
We are done since $\bar\rho_\ell^{\ss}(\text{Fr}_{\bar v_c})_{\ss}=g_{c,\ell}\in\uT_{c,\ell}$.
\end{proof}

\subsection{Proof of Theorem \ref{main}}

\begin{proof}
After taking a finite extension $L$ of $K$ and semisimplification of $\rho_\ell$,
we may assume $\bG_\ell$ is connected reductive for all $\ell$ and $G_\ell$ 
is a subgroup of $\finiteAlgebraicG_\ell(\F_\ell)$ for $\ell\gg0$ by Theorem \ref{connect}.
By the constructions of 
\begin{align*}
\begin{split}
\Gamma_\ell\subset\bG_\ell(\Q_\ell)\subset\GL_n(\Q_\ell)\\
G_\ell\subset \finiteAlgebraicG_\ell(\F_\ell)\subset\GL_n(\F_\ell)
\end{split}
\end{align*}
and Theorem \ref{thmA}(i), we are in the setting of Theorem \ref{gpmain},
and the conditions \ref{gpmain}(a)--(d) are verified. Moreover, \ref{gpmain}(e) 
and \ref{gpmain}(f) are verified by Theorem \ref{ae}(i),(iii).
Thus for $\ell\gg0$, the condition ($\ast$) implies hyperspeciality of $\Gamma^{\sc}_\ell$ in $\bG_\ell^{\sc}(\Q_\ell)$
(which in turn implies the unramifiedness of $\bG_\ell^{\sc}$, $\bG_\ell^{\der}$, and $\bG_\ell$ (Proposition \ref{FT}(i))).

Next, we prove the converse. Let $\cG_\ell'$ be the Zariski closure 
of the derived group $\Gamma_\ell'$ in $\GL_{\Lambda_\ell}$ ($\Lambda_\ell$ the lattice in $\Q_\ell^n$) 
endowed with the unique structure of reduced closed
subscheme. For $\ell\gg0$, the group scheme $\cG_\ell'$ is smooth with constant rank 
over $\Z_\ell$ \cite[Theorem 9.1, $\mathsection9.2.1$]{CHT17} and has generic fiber $\bG_\ell^{\der}$.
We first show that $\cG_\ell'$ is semisimple for $\ell\gg0$.
Suppose $\Gamma^{\sc}_\ell$ is a hyperspecial maximal compact subgroup of 
$\bG_\ell^{\sc}(\Q_\ell)$. When $\ell$ is sufficiently large,
there exists a semisimple group scheme $\cH/\Z_\ell$ whose generic fiber is $\bG_\ell^{\sc}$, satisfying $\cH(\Z_\ell)=\Gamma_\ell^{\sc}$.
Since $\Gamma_\ell^{\sc}$ is perfect if $\ell$ is large enough depending on $n$ \cite[Theorem 3.4]{HL15},
it maps into the commutator subgroup $\Gamma_\ell'\subset \cG_\ell'(\Z_\ell)$. Consider
\begin{equation}\label{ahrep}
\bar\rho_\ell:\Gamma_\ell^{\sc}\to\Gamma_\ell'\hookrightarrow\cG'_\ell(\Z_\ell)
\hookrightarrow\GL(\Lambda_\ell)\to\GL(\Lambda_\ell\otimes\F_\ell)
\end{equation}
and let  $\uR_\ell\subset\GL_{\Lambda_\ell\otimes\F_\ell}$ be Nori group of $\bar\rho_\ell(\Gamma_\ell^{\sc})$.
For $\ell$ large enough depending on $n$, the groups
$$\cH(\F_\ell),\ \Gamma_\ell^{\sc},\ \bar\rho_\ell(\Gamma_\ell^{\sc}),\ \uR_\ell(\F_\ell)$$ 
have the same $\ell$-dimension by Theorems \ref{NTB} and \ref{NTC} and the remarks of \S\ref{s2.3.2}.
Then it follows by Proposition \ref{DR1}(iii) that 
\begin{equation}\label{Noriss}
\dim \bG_\ell^{\der}=\dim\cH_{\F_\ell}= \dim_\ell(\cH(\F_\ell))=\dim_\ell(\uR_\ell(\F_\ell))= \dim \uR_\ell^{\ss}.
\end{equation}
Since $\dim \bG_\ell^{\der}\geq \dim\uR_\ell$ by \cite[Theorem 7]{La10} for $\ell$ large enough depending on $n$, 
it follows that the Nori group $\uR_\ell$ is semisimple 
and  the action of $\uR_\ell(\F_\ell)$ (resp. $\uR_\ell(\F_\ell)^+ = \bar\rho_\ell(\Gamma_\ell^{\sc})^+$) on $\Lambda_\ell\otimes\F_\ell$ 
is also semisimple by \cite[Theorem 3.5]{La-SS}.
Since $\bar\rho_\ell(\Gamma_\ell^{\sc})^+$ is normal in $\bar\rho_\ell(\Gamma_\ell^{\sc})$ of prime to $\ell$ index, 
\eqref{ahrep} is semisimple.

Since the equality $\dim\bG_\ell^{\der}=\dim\uR_\ell$ holds, it follows by \cite[Theorem 7(3)]{La10} that 
$\bar\rho_\ell(\Gamma_\ell^{\sc})$ is a subgroup of $\cG_\ell'(\F_\ell)$ 
of index bounded by a constant depending only on $n$.
Hence, if the unipotent radical of the special fiber of $\cG_\ell'$
is non-trivial for some large enough $\ell$, then $\bar\rho_\ell(\Gamma_\ell^{\sc})$ has a non-trivial 
normal subgroup of unipotent elements, which contradicts the semisimplicity of \eqref{ahrep}. 
Thus the group scheme $\cG_\ell'$ is semisimple over $\Z_\ell$.

The weights appearing in the natural $n$-dimensional representation of the generic fiber $\cG'_{\ell,\overline\Q_\ell}$ 
remain bounded as $\ell$ varies.
By a theorem of Springer \cite[Corollary~4.3]{Springer}, if $\ell$ is sufficiently large, the (mod $\ell$) reduction of every irreducible factor in this representation 
is again irreducible, and the (mod $\ell$) reductions of distinct irreducible factors are distinct.
Thus the composition of the special fiber $\cG'_{\ell,\overline\F_\ell}\to \GL_{\Lambda_\ell\otimes\overline\F_\ell}$ 
with the adjoint representation of $\GL_{\Lambda_\ell\otimes\overline\F_\ell}$ is semisimple,
its irreducible factors have bounded highest weights, and they are in one-to-one correspondence with the irreducible factors of the composition
of $\cG'_{\ell,\overline\Q_\ell}\to \GL_{n,\overline\Q_\ell}$ with its adjoint representation. So we obtain
\begin{equation*}
\dim_{\F_\ell} \End_{\cG'_{\ell,\F_\ell}}(\Lambda_\ell\otimes\F_\ell) 
= \dim_{\Q_\ell} \End_{\cG'_{\ell,\Q_\ell}}(\Q_\ell^n) = \dim_{\Q_\ell} \End_{\Gamma_\ell'}(\Q_\ell^n).
\end{equation*}
Since the index of $\Gamma_\ell'$ in $\cG'_\ell(\F_\ell)$ 
and the formal character of $\cG'_{\ell,\F_\ell}\subset\GL_{\Lambda_\ell\otimes\F_\ell}$ are uniformly bounded independent of $\ell$,
it follows by Proposition \ref{commu-var}(i),(iv) that for $\ell\gg0$ the image of $\Gamma_\ell'$ in $\GL(\Lambda_\ell\otimes\F_\ell)$ in \eqref{ahrep} can be identified with the semisimple action $G_\ell'\to\GL(\F_\ell^n)$ and  
$$\dim_{\F_\ell}\End_{G_\ell'}(\F_\ell^n)= \dim_{\F_\ell} \End_{\cG'_{\ell,\F_\ell}}(\Lambda_\ell\otimes\F_\ell)$$
holds. Hence, we deduce \eqref{reducetoder} for $\ell\gg0$.

Now, $\Gamma_\ell/\Gamma'_\ell$ is a Zariski-dense subset of the torus $\bG_\ell/\bG_\ell^{\der}$, which acts on the space of $\Gamma'_\ell$-invariants or, equivalently, the space of $\bG_\ell^{\der}$-invariants, in the adjoint representation of $\GL_{n,\Q_\ell}$.  Thus, any Zariski-dense subset of $\bG_\ell(\Q_\ell)$ contains an element $\gamma$ with the property that any vector in the adjoint representation of $\GL_{n,\Q_\ell}$ which is fixed by $\Gamma'_\ell$ and by $\gamma$ is fixed by $\Gamma_\ell$.

By Theorem~\ref{ae}, $G_\ell$ is of bounded index in $\uG_\ell(\F_\ell)$, so by Proposition~\ref{M-regular}, if $\ell$ is sufficiently
large, there exists a maximal torus $\uT_\ell$ of $\uG_\ell$ and an element $\bar\gamma\in G_\ell\cap \uT_\ell(\F_\ell)$
such that $\bar\gamma$ is not in the kernel of any non-trivial character of $\uT_\ell$ acting in the restriction to $\uG_\ell$ of
the adjoint representation of $\GL_{n,\F_\ell}$.  
We want to apply Proposition~\ref{primetonon} 
with $\Gamma:=\Gamma_\ell$ and $\Delta := \Gamma'_\ell$.
The elements $\gamma\in \Gamma_\ell$ which reduce (mod $\ell$) to $\bar\gamma$ are
Zariski-dense in $\bG_\ell(\Q_\ell)$, so we may choose $\gamma$ to satisfy properties (a) and (b).
By Theorem~\ref{thmA} (i), the formal characters of $\uG_\ell\to \GL_{n,\F_\ell}$ and $\bG_\ell\to \GL_{n,\Q_\ell}$ are
the same, so in particular, two eigenvalues $\lambda_1$ and $\lambda_2$ of $\gamma$ are equal if $\lambda_1-\lambda_2$ is not
a unit.
We can therefore apply the proposition, and ($\ast$) follows.\end{proof}

Without ($\ast$), we can prove much less.  Nevertheless, we still have the following.

\begin{prop}\label{FT}
Let $\{\rho_\ell\}_\ell$ be the system of $\ell$-adic representations
arising from the $i$th $\ell$-adic cohomology of a proper smooth variety $X$
defined over a number field $K$ which is sufficiently large. Then for sufficiently large $\ell$,
\begin{enumerate}[(i)]
\item the reductive group $\bG_\ell^{\red}$ splits over some finite unramified extension of $\Q_\ell$;
\item the reductive group $\bG_\ell^{\red}$ is unramified over every degree $12$ totally ramified extension of $\Q_\ell$.
\end{enumerate}
\end{prop}

Recall that, according to standard terminology, a connected reductive group over a local field which splits over an unramified extension of that field need not be unramified, since it need not have a rational Borel subgroup.

\begin{proof}
The first assertion follows immediately by the method of Frobenius tori (see \cite{Se81,Ch92,LP97}).
The second assertion follows from the first and the last statement in Theorem~\ref{gpmain}.
\end{proof}

\subsection{Proof of Theorem \ref{main2}(a)}
\begin{proof}
Let $X$ be an abelian variety defined over a subfield $K$ of $\C$
that is finitely generated over $\Q$.
Since the $\ell$-adic representation $\rho_\ell$ arising from $H^i(X_{\overline K},\Q_\ell)$
is semisimple by Faltings \cite{Fa83}, the algebraic monodromy group $\bG_\ell$ is reductive.

We first treat the case $i=1$.
By taking a finite extension of $K$, we may assume $\bG_\ell$ is connected for all $\ell$ (see e.g., \cite[$\mathsection2.3$]{Ca15}).
There exists an abelian scheme $f:\mathcal{X}\to\mathcal{S}$ defined over some number field 
whose generic fiber is $X\to\mathrm{Spec}K$.
Let $s$ be a closed point of $\mathcal{S}$ and  $R^1f_*\Q_\ell$ the lisse sheaf on $\mathcal{S}$.
Then by the proper-smooth base change theorem, $\rho_\ell$ factors through the $\ell$-adic representation 
$$\psi_\ell:\pi_1^{\et}(\mathcal{S},\bar s)\to \GL(R^1f_*\Q_\ell|_{\bar s})$$
for all $\ell$. 
Hence, we may assume $\Gamma_\ell$ (resp. $\bG_\ell$) is the monodromy group (resp. algebraic monodromy group) of $\psi_\ell$. 
Moreover, the composition $\psi_{\ell,s}:=\psi_\ell\circ (\pi_1^{\et}(s,\bar s)\to \pi_1^{\et}(\mathcal{S},\bar s))$
is isomorphic to the $\ell$-adic representation $H^1(\mathcal{X}_{\bar s},\Q_\ell)$ 
of the abelian variety $\mathcal{X}_s$ (the fiber over $s$)
defined over the residue field of $s$ (some number field) for all $\ell$.
Let $\Gamma_{\ell,s}$ (resp. $\bG_{\ell,s}$) be the monodromy group (resp. algebraic monodromy group) of $\psi_{\ell,s}$.
We may identify $\Gamma_{\ell,s}$ (resp. $\bG_{\ell,s}$) as a subgroup of $\Gamma_\ell$ (resp. $\bG_{\ell}$).

Fix a prime $p$, one can find a closed point $s$ of $\mathcal{S}$ such that $\bG_{p,s}=\bG_p$ \cite{Se81}.
By the main theorem of \cite{Hu12}, we have $\bG_{\ell,s}=\bG_\ell$ for all $\ell$.
Therefore, it suffices to deal with the case when $K$ is a number field.
Since the condition $(\ast)$ holds
by the Tate conjecture for abelian varieties proved by Faltings in the strong form given in \cite[Theorem~4.2]{FW84},
we are done by Theorem \ref{main}.

Since we have $H^i(X_{\overline K},\Q_\ell)\cong\bigwedge^i H^1(X_{\overline K},\Q_\ell)$
as $\Gal_K$-representations, the general case follows from the lemma below.
\end{proof}

\begin{lemma}\label{hypermorph}
Let $\mu:\bG\to \bH$ be a surjective morphism between connected reductive algebraic groups defined over $\Q_\ell$
and $\Gamma$ a compact subgroup of $\bG(\Q_\ell)$.
If $\Gamma^{\sc}$ is a hyperspecial maximal compact subgroup of $\bG^{\sc}(\Q_\ell)$,
then $\mu(\Gamma)^{\sc}$ is a hyperspecial maximal compact subgroup of $\bH^{\sc}(\Q_\ell)$.
\end{lemma}

\begin{proof}
The surjective $\Q_\ell$-morphism $\mu:\bG\to\bH$ induces a surjective $\Q_\ell$-morphism 
$\mu^{\sc}:\bG^{\sc}\to\bH^{\sc}$ mapping $\Gamma^{\sc}$ into $\mu(\Gamma)^{\sc}$. 
Since both $\bG^{\sc}$ and $\bH^{\sc}$ are simply connected,
$\bH^{\sc}$ can be identified as a direct factor of $\bG^{\sc}$ and $\mu^{\sc}$ can be identified as the projection to the factor.
It follows that $\mu^{\sc}(\Gamma^{\sc})$ is also a hyperspecial maximal compact subgroup of $\bH^{\sc}(\Q_\ell)$. 
Since  $\mu^{\sc}(\Gamma^{\sc})\subset \mu(\Gamma)^{\sc}$ holds, 
the compact subgroup $\mu(\Gamma)^{\sc}$ is equal to $\mu^{\sc}(\Gamma^{\sc})$ and we are done.
\end{proof}

\subsection{Proof of Theorem \ref{main2}(b)}
Let $X$ be a hyperk\"ahler variety defined over a subfield $K$ of $\C$
that is finitely generated over $\Q$ and $\rho_\ell$ the $\ell$-adic representation 
arising from $H^2(X_{\overline K},\Q_\ell)$. If the dimension $n$ of the representation is less than
$4$, then $\bG^{\sc}$ is trivial or of type A for all $\ell$.
Hence, Theorem \ref{main2}(b) follows by \cite[Theorem 15]{HL16}.

Suppose $n\geq 4$ and write $X_\C:=X\times_K\C$.
By the Kuga-Satake construction, there is a complex abelian variety $A_\C$ and a surjective morphism  
\begin{equation}\label{Hodge}
H^1(A_\C,\Q)\otimes H^1(A_\C,\Q)\to H^2(X_\C,\Q)
\end{equation}
of pure Hodge structures, i.e.,
there is a Hodge cycle of 
$$H^1(A_\C,\Q)^*\otimes H^1(A_\C,\Q)^*\otimes H^2(X_\C,\Q)$$ 
giving the correspondence \eqref{Hodge}.
Assume $A_\C$ has a model $A_L$ defined over a subfield $L$ of $\C$ that is finitely generated over $K$.
Since such Hodge cycles are motivated on the product $X_\C\times A_\C$ \cite[Corollary 1.5.3]{An96a}
and motivated cycles are absolute Hodge \cite[Proposition 2.5.1]{An96b}
in the sense of Deligne \cite[2.10]{De82},
we obtain for each $\ell$ a surjective morphism  
$$H^1(A_{\overline L},\Q_\ell)\otimes H^1(A_{\overline L},\Q_\ell)\to H^2(X_{\overline L},\Q_\ell)$$
of $\Gal_L$-representations (after replacing $L$ by a finite extension if necessary \cite[Proposition 2.9(b)]{De82}).
Since $A_L$ is an abelian variety defined over a subfield $L$ of $\C$
that is finitely generated over $\Q$,
the assertion of Theorem \ref{main2} holds for $X_L:=X\times_K L$
by Theorem \ref{main2}(a) and Lemma \ref{hypermorph}.
By induction, it suffices to show that Theorem \ref{main2} for $X/K$ also holds 
when $L$ is a finite extension of $K$ or $L=K(t)$ where $t$ is transcendental over $K$.
The former case is obvious and the latter case can be done by the fact that $\pi_1^{\et}(\mathrm{Spec}\overline K[t])$
is trivial.\qed

\vspace{.1in}
\end{document}